\theoremstyle{plain}
\newtheorem{theorem}{Theorem}[section]
\newtheorem{proposition}[theorem]{Proposition}
\newtheorem{lemma}[theorem]{Lemma}
\newtheorem{corollary}[theorem]{Corollary}
\newtheorem{conjecture}[theorem]{Conjecture}
\newtheorem{claim}[theorem]{Claim}
\theoremstyle{definition}
\newtheorem{example}[theorem]{Example}
\newcommand{\excise}[1]{}
\newcommand{\rdots}{\mathinner{%
  \mkern1mu\raise1pt\hbox{.}%
  \mkern2mu\raise4pt\hbox{.}%
  \mkern2mu\raise7pt\vbox{\kern7pt\hbox{.}}\mkern1mu}}
\numberwithin{equation}{section}
\newcommand{\plusr}{{\color{Rhodamine}+}}
\newcommand{\plusg}{{\color{red}+}}
\newcommand{\plusb}{{\color{blue}+}}
\newcommand{\pluso}{{\color{orange}+}}
\newcommand{\plusy}{{\color{ForestGreen}+}}
\begin{document}
\pagestyle{plain}
\title{The Prism tableau model for Schubert polynomials}
\author{Anna Weigandt}
\author{Alexander Yong}
\address{Dept.~of Mathematics, U.~Illinois at
Urbana-Champaign, Urbana, IL 61801, USA}
\email{weigndt2@uiuc.edu, ayong@uiuc.edu}

\keywords{}

\date{September 8, 2015}

\maketitle 

\begin{abstract}
The  Schubert polynomials 
lift the Schur basis of symmetric polynomials into
a basis for ${\mathbb Z}[x_1,x_2,\ldots]$. 
We suggest the \emph{prism tableau model} for these polynomials.
A novel aspect of this alternative to earlier results is that it 
directly invokes semistandard tableaux; it does so as part of a colored
tableau amalgam. In the Grassmannian case, a prism tableau with colors ignored is a semistandard Young tableau. Our arguments are developed from the Gr\"{o}bner geometry of
matrix Schubert varieties.  
\end{abstract}

\tableofcontents

\section{Introduction}

\subsection{Overview}

A.~Lascoux--M.-P.~Sch\"{u}tzenberger \cite{Lascoux.Schutzenberger} 
recursively defined an integral basis of ${\sf Pol}={\mathbb Z}[x_1,x_2,\ldots]$ given by 
the {\bf Schubert polynomials} $\{{\mathfrak S}_w:w\in S_\infty\}$. If $w_0$ is the longest length permutation 
in the symmetric group $S_n$ then ${\mathfrak S}_{w_0}:=x_1^{n-1}x_2^{n-2}\cdots x_{n-1}$. Otherwise, $w\neq w_0$ and 
there exists $i$ such that $w(i)<w(i+1)$. Now one sets
${\mathfrak S_w}=\partial_i {\mathfrak S}_{ws_i}$, where
$\partial_i f:= \frac{f-s_if}{x_i-x_{i+1}}$  
(since the polynomial operators $\partial_i$ form a representation of $S_n$, this definition is self-consistent.) It is true that
under the standard inclusion $\iota:S_n\hookrightarrow S_{n+1}$, ${\mathfrak S}_w={\mathfrak S}_{\iota(w)}$. Thus one can
refer to ${\mathfrak S}_w$ for each $w\in S_{\infty}=\bigcup_{n\geq 1} S_n$. 

Textbook understanding of the ring {\sf Sym} of symmetric polynomials centers
around the basis of Schur polynomials 
and its successful companion, the theory of Young tableaux.  
Since Schur polynomials are instances of Schubert polynomials, the 
latter basis naturally lifts the Schur basis into {\sf Pol}. Yet, it is also true that
Schubert polynomials have nonnegative integer 
coefficients. Consequently, one has a natural problem:
\begin{quotation}
Is there a combinatorial model for Schubert polynomials that 
is analogous to the
semistandard tableau model for Schur polynomials?
\end{quotation}
Indeed, multiple solutions have been discovered over the years, e.g., \cite{Kohnert}, \cite{BJS},
\cite{Bergeron.Billey}, \cite{Fomin.Stanley}, \cite{Fomin.Kirillov}, \cite{balanced}, \cite{Magyar}, \cite{Bergeron.Sottile, Bergeron.Sottile:II},
\cite{BKTY} and \cite{Taskin} (see also \cite{LS:85}). In turn, the solutions \cite{BJS, Bergeron.Billey, Fomin.Stanley, Fomin.Kirillov} have been the foundation
for a vast literature at the confluence of combinatorics, representation theory and combinatorial algebraic geometry.

We wish to put forward another solution -- a novel aspect of which is that it directly invokes semistandard tableaux. Both the statement and proof of our alternative model build upon ideas 
about the Gr\"{o}bner geometry of matrix Schubert varieties $X_w$. 
 We use the 
Gr\"obner degeneration of $X_w$ and the interpretation
of ${\mathfrak S}_w$ as mutidegrees of $X_w$ \cite{Knutson.Miller:annals}. Actually,
a major purpose of \emph{loc. cit.} is to establish the geometric naturality of the combinatorics of
\cite{BJS, Bergeron.Billey, Fomin.Kirillov}. Our point of departure is stimulated by later work
of A.~Knutson on \emph{Frobenius splitting} \cite[Theorem~6 and Section~7.2]{Knutson:Frob}.

\subsection{The main result}
We recall some permutation combinatorics found in, e.g., in \cite{Manivel}. 
The {\bf diagram of $w$} is $D(w)=\{(i,j): 1\leq i,j\leq n, w(i)>j \text{\ and \ } w^{-1}(j)>i\}\subset n\times n$. 
Let ${\mathcal Ess}(w)\subset D(w)$ be the {\bf essential set} of $w$: the southeast-most boxes of each connected component of $w$. The {\bf rank function} is 
$r_w(i,j)=\#\{t\leq i:w(t)\leq j\}$.

Define $w$ to be {\bf Grassmannian} if it has at most one descent, i.e., at most one index $k$ such that $w(k)>w(k+1)$. If in addition 
$w^{-1}$ is Grassmannian then $w$ is {\bf biGrassmannian}.
For $e=(i,j)\in {\mathcal Ess}(w)$ let $R_e$ be the $(i-r_w(i,j))\times (j-r_w(i,j))$ rectangle 
with southwest corner at position $(i,1)$ of $n\times n$.  The {\bf shape} 
of $w$ is $\lambda(w)=\bigcup_{e\in\mathcal Ess(w)}R_e$:

\begin{figure}[h]
\label{fig:first}
\begin{picture}(400,70)
\put(30,0){\includegraphics[scale=.61]{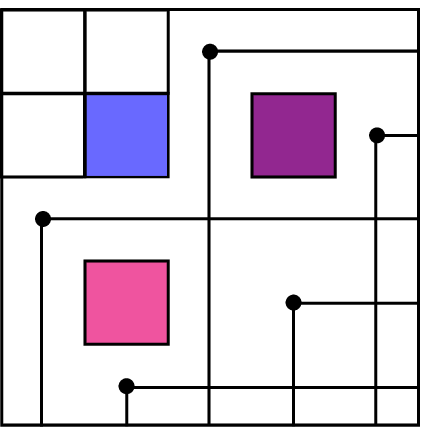}}
\put(168,41){\includegraphics[scale=.19]{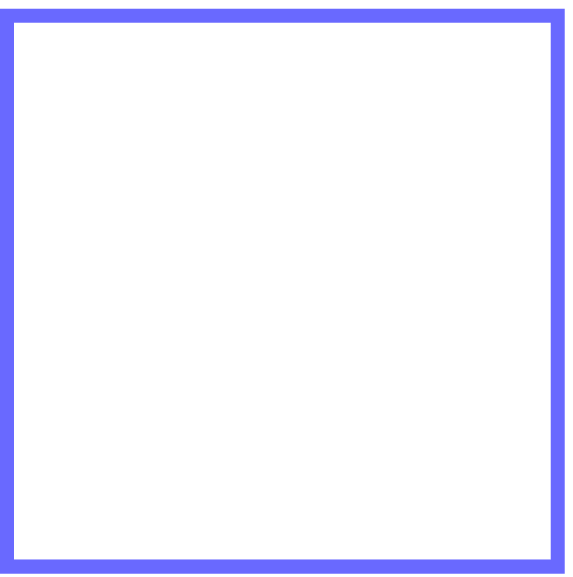}}
\put(168.5,9){\includegraphics[scale=.19]{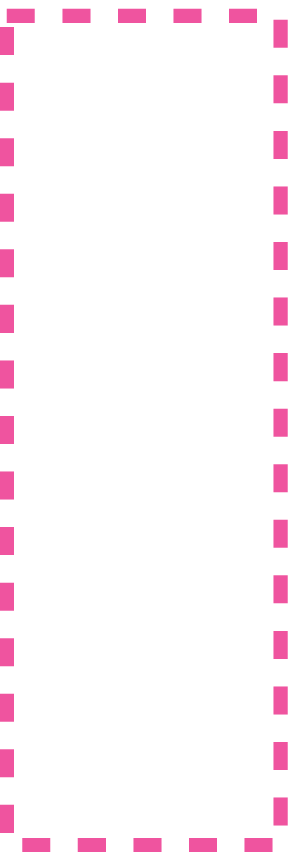}}
\put(169,40.5){\includegraphics[scale=.19]{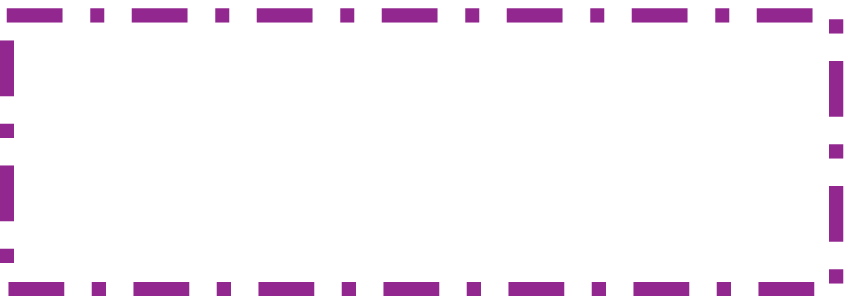}}

\put(48,50){$e_1$}
\put(78,50){$e_2$}
\put(47,20){$e_3$}
\put(195,30){$R_{e_1}$}
\put(215,30){$R_{e_2}$}
\put(187,5){$R_{e_3}$}
\put(285,42){$\lambda(w)=$}
\put(132,40){$\Rightarrow$}
\put(249,40){$\Rightarrow$}
\put(320,58){
\ytableausetup{baseline}
\resizebox{48pt}{!}{
\begin{ytableau}
\ &\ \\
\ \ \ & \ &\ \\
\ \\
\ 
\end{ytableau}}}
\end{picture}
\caption{The diagram of $w=35142$ (with color coded essential set $\{e_1,e_2,e_3\}$),
the overlay of $R_{e_1}, R_{e_2}, R_{e_3}$, and the shape $\lambda(w)$.}
\end{figure}

A {\bf prism tableau} $T$ for $w$ fills $\lambda(w)$ with colored labels (one color for each $e\in {\mathcal Ess}(w)$) such that the labels of color $e$:
\begin{itemize}
\item[(S1)] sit in a box of $R_e$;
\item[(S2)] weakly decrease along rows from left to right;
\item[(S3)] strictly increase along columns from top to bottom; and
\item[(S4)] are {\bf flagged}: a label is no bigger than the row of the box it
sits in.
\end{itemize}
Let $d_i(w)$ be the number of distinct values (ignoring color) seen on the $i$-th 
antidiagonal (i.e., the one meeting $(i,1)$), for $i=1,2,\ldots,n$. 
We say $T$ is {\bf minimal} if $\sum_{i=1}^n d_i(w)=\ell(w)$, where $\ell(w)$ is the \emph{Coxeter length} of $w$. 

Let $\ell_c$ be a label $\ell$ of color $c$. 
Labels $\{\ell_c,\ell_d, \ell'_e\}$
in the same antidiagonal form an {\bf unstable triple}
if $\ell<\ell'$ and replacing the $\ell_c$ with $\ell'_c$
gives a prism tableau. See Example~\ref{exa:unstable}.
Let ${\tt Prism}(w)$ be the set of minimal prism tableaux with no unstable triples.
Finally, set \[{\mathfrak P}_{w}(x_1,\ldots,x_n):=\sum_{T\in {\tt Prism}(w)} {\tt wt}(T), 
\text{\ \ where ${\tt wt}(T)=\prod_{i}x_i^{\text{$\#$ of antidiagonals containing $i$}}$}.\] 

\begin{theorem}
\label{theorem:main}
${\mathfrak S}_w(x_1,\ldots,x_n)={\mathfrak P}_w(x_1,\ldots,x_n)$.
\end{theorem}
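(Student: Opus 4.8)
The plan is to derive Theorem~\ref{theorem:main} from the Gr\"obner geometry of the matrix Schubert variety $X_w\subseteq\mathrm{Mat}_{n\times n}$, following Knutson--Miller. First I would recall that $X_w$ is cut out by the Schubert determinantal ideal $I_w$, generated by the size-$(r_w(i,j)+1)$ minors of the generic matrix $Z=(z_{ab})$ taken in its northwest $i\times j$ submatrices for $(i,j)\in{\mathcal Ess}(w)$, and that, with the coordinate ring graded by $\deg z_{ab}=x_a$, one has $\mathrm{multideg}(X_w)={\mathfrak S}_w(x_1,\dots,x_n)$. Degenerating $I_w$ along an antidiagonal term order replaces it by the radical monomial ideal generated by the antidiagonal terms of those minors, whose associated scheme is a reduced, equidimensional union of coordinate subspaces $L_\sigma=\{z_{ab}=0:(a,b)\notin\sigma\}$ indexed by the facets $\sigma$ of the resulting Stanley--Reisner complex. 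Since multidegree is preserved by flat degeneration and additive over top-dimensional components, ${\mathfrak S}_w=\sum_\sigma\prod_{(a,b)\notin\sigma}x_a$; writing $F=\sigma^c$, this is $\sum_F\prod_{(a,b)\in F}x_a$ over the \emph{minimal antidiagonal covers} $F$ --- cardinality-$\ell(w)$ sets of cells meeting the antidiagonal term of every defining minor. So the theorem reduces to exhibiting a weight-preserving bijection $\Phi\colon{\tt Prism}(w)\to\{\text{minimal antidiagonal covers}\}$, where a cover $F$ carries weight $\prod_{(a,b)\in F}x_a$.

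For the map $\Phi$, I would scan the antidiagonals of $\lambda(w)$ (the $i$-th meeting $(i,1)$): to each \emph{distinct} value $v$ appearing on the $i$-th antidiagonal, attach a cell in row $v$, with column determined by the antidiagonal index (e.g.\ the cell $(v,\,i-v+1)$), and let $F(T)$ be the set of cells so produced. The flag condition (S4) forces each such $v$ to be at most the row of its box, which places all these cells in the staircase region that carries covers; conditions (S1)--(S3) --- the color-$e$ labels living in $R_e$, weakly decreasing along rows, strictly increasing down columns --- translate precisely into the statement that, for every $e\in{\mathcal Ess}(w)$, $F(T)$ contains an antidiagonal term of the minor attached to $e$, i.e.\ $F(T)$ is a cover. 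Since $(i,v)\mapsto\text{cell}$ is injective, $|F(T)|=\sum_i d_i(w)$, so ``$T$ minimal'' is exactly ``$|F(T)|=\ell(w)$'', and a cover of that cardinality inside a codimension-$\ell(w)$ scheme is automatically minimal, hence a facet. The weight matches on the nose: a cell in row $v$ contributes $x_v$, and $x_v$ occurs in $F(T)$ with multiplicity equal to the number of antidiagonals of $\lambda(w)$ carrying the value $v$, which is exactly its exponent in ${\tt wt}(T)$.

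To see $\Phi$ is a bijection: two minimal prism tableaux have $F(T)=F(T')$ iff they carry the same distinct values on each antidiagonal, and an unstable-triple move (replacing an $\ell_c$ by $\ell'_c$ while an $\ell$ of another color and an $\ell'$ of a third color remain on that antidiagonal) is precisely an operation that keeps this antidiagonal content fixed; so forbidding unstable triples is a normalization picking one representative per class, and one must check this representative is unique (a confluence, diamond-lemma-type argument for the unstable-triple moves). This gives injectivity of $\Phi$ on ${\tt Prism}(w)$. Surjectivity says every minimal cover $F$ equals $F(T)$ for a (unique, unstable-triple-free) $T$: one reads off the antidiagonal content of $F$ and must distribute colors among the cells of $\lambda(w)$ so that the color-$e$ labels fill a legal configuration inside $R_e$; here ``$F$ is a cover'' guarantees room, and the way the essential rectangles overlay to form $\lambda(w)=\bigcup_e R_e$ forces both the placement and the monotonicities (S1)--(S3), the flag (S4) coming from $F$ lying in the staircase. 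As a consistency check, when $w$ is Grassmannian there is a single essential rectangle and the colors are vacuous, so (S1)--(S4) collapse to the defining conditions of a semistandard Young tableau --- recovering the classical Schur-polynomial formula and the abstract's claim that, in that case, a prism tableau with colors ignored is a semistandard Young tableau.

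The hard part will be this coloring step in both directions: showing that the color-forgetful content of $\lambda(w)$ read along antidiagonals both determines and is determined by a minimal antidiagonal cover, and that among minimal prism tableaux the unstable-triple-free ones are exactly the canonical representatives of their color-change classes. This requires a careful combinatorial analysis of how the rectangles $R_e$ sit inside $\lambda(w)$ --- so that the monotonicity/rank data (S1)--(S3) genuinely matches the covering conditions cut out by the essential minors --- together with the confluence argument for the unstable-triple relation. What ultimately makes this the \emph{correct} normal form, rather than just one more weight-preserving model of ${\mathfrak S}_w$, is the input from geometry: the Knutson--Miller Gr\"obner basis for $I_w$ under the antidiagonal order and Knutson's description of the components of the resulting limit, which I would use to identify the facets with the cover data above.
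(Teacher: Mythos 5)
Your reduction to Gr\"obner geometry is the same as the paper's: you invoke \cite[Theorems A and B]{Knutson.Miller:annals} to write ${\mathfrak S}_w$ as a weighted sum over facets of the antidiagonal initial complex (equivalently, over ${\tt MinPlus}(X_w')$), and your map $T\mapsto F(T)$, reading each distinct value $v$ on the $i$-th antidiagonal as a cell in row $v$ of that antidiagonal, is precisely the paper's composite ${\tt supp}\circ\Phi$. Two points in your bijection are asserted rather than proved. The lesser one is well-definedness: that the color-$e$ labels, via (S1)--(S4), produce a transversal of the antidiagonals of the essential minor at $e$ is exactly the bijection between flagged semistandard fillings of the rectangle $R_e$ and ${\tt MinPlus}(X_{u_e}')$ for the biGrassmannian $u_e$; this is citable (\cite[Proposition~5.3]{KMY}, \cite{Kogan}) and the paper cites it, so no real harm. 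Surjectivity is also fine and matches Lemma~\ref{lemma:mainobs}(II): a minimal cover restricts to a cover for each $u_e$, hence contains a minimal one of each color, and minimality forces the overlay to equal the given cover; an unstable-triple-free representative then exists because each move strictly increases a label, so the moves terminate.

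The genuine gap is injectivity of $\Phi$ on ${\tt Prism}(w)$, i.e.\ \emph{uniqueness} of the unstable-triple-free tableau among all minimal prism tableaux with a given antidiagonal content (a given support). You defer this to ``a confluence, diamond-lemma-type argument,'' but that step is the technical core of the theorem and is not routine: performing one unstable-triple replacement can create or destroy unstable triples elsewhere, and moves in different colors interact through the shared shape $\lambda(w)$, so local confluence is not evident and would itself require an analysis of how the rectangles $R_e$ overlap. The paper's substitute for your missing argument occupies most of Section~3: an ordering of the $+$'s in a biGrassmannian plus diagram, Proposition~\ref{prop:moveorder} and Lemma~\ref{lemma:meetporism}, the resulting lattice structure and long-move connectivity of each fiber ${\tt Multi}(\mathcal P)$ of the support map (Theorem~\ref{theorem:overlay}), the identification of support-preserving southwest long moves with unstable triples (Claim~\ref{claim:longMove}), and finally the conclusion that the unstable-triple-free element of a fiber is its unique lattice minimum (Claim~\ref{claim:prismBijection}). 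Note also Example~\ref{ex:notlattice}: for non-minimal supports the fiber need not be a lattice, so the uniqueness you want really does depend on minimality and cannot follow from a purely formal rewriting argument about the moves alone. Until you supply a proof of this uniqueness (or an equivalent canonical-form statement), your map is a weight-preserving surjection-candidate rather than a bijection, and the identity ${\mathfrak S}_w={\mathfrak P}_w$ is not yet established.
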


\begin{example}[Reduction to semistandard tableaux]
Consider the Grassmannian permutation $w=246135$.  Conflating prism tableaux with their weights, 
Theorem~\ref{theorem:main} asserts:
\begin{center}
\ytableausetup{ boxsize=1.1em,aligntableaux=center,nobaseline}
$\mathfrak S_w=\begin{ytableau}\color{blue} 1\\ \color{blue} 2\color{Plum}
       2&\color{Plum} 2\\  \resizebox{17pt}{!}{ \color{blue} 3\color{Plum} 3\color{Rhodamine}
       3 }&\color{Plum} 3\color{Rhodamine} 3&\color{Rhodamine} 1\\
       \end{ytableau} + \begin{ytableau}\color{blue} 1\\ \color{blue}
       2\color{Plum} 2&\color{Plum} 2\\  \resizebox{17pt}{!}{ \color{blue}
       3\color{Plum} 3\color{Rhodamine} 3 }&\color{Plum} 3\color{Rhodamine}
       3&\color{Rhodamine} 2\\ \end{ytableau} + \begin{ytableau}\color{blue} 1\\
       \color{blue} 2\color{Plum} 2&\color{Plum} 2\\ \resizebox{17pt}{!}{ \color{blue}
       3\color{Plum} 3\color{Rhodamine} 3 }&\color{Plum} 3\color{Rhodamine}
       3&\color{Rhodamine} 3\\ \end{ytableau} + \begin{ytableau}\color{blue} 1\\
       \color{blue} 2\color{Plum} 2&\color{Plum} 1\\  \resizebox{17pt}{!}{ \color{blue}
       3\color{Plum} 3\color{Rhodamine} 3 }&\color{Plum} 2\color{Rhodamine}
       2&\color{Rhodamine} 1\\ \end{ytableau} + \begin{ytableau}\color{blue} 1\\
       \color{blue} 2\color{Plum} 2&\color{Plum} 1\\ \resizebox{17pt}{!}{ \color{blue}
       3\color{Plum} 3\color{Rhodamine} 3 }&\color{Plum} 2\color{Rhodamine}
       2&\color{Rhodamine} 2\\ \end{ytableau} + \begin{ytableau}\color{blue} 1\\
       \color{blue} 2\color{Plum} 2&\color{Plum} 1\\  \resizebox{17pt}{!}{ \color{blue}
       3\color{Plum} 3\color{Rhodamine} 3 }&\color{Plum} 3\color{Rhodamine}
       3&\color{Rhodamine} 1\\ \end{ytableau} + \begin{ytableau}\color{blue} 1\\
       \color{blue} 2\color{Plum} 2&\color{Plum} 1\\ \resizebox{17pt}{!}{ \color{blue}
       3\color{Plum} 3\color{Rhodamine} 3 }&\color{Plum} 3\color{Rhodamine}
       3&\color{Rhodamine} 2\\ \end{ytableau} + \begin{ytableau}\color{blue} 1\\
       \color{blue} 2\color{Plum} 2&\color{Plum} 1\\  \resizebox{17pt}{!}{ \color{blue}
       3\color{Plum} 3\color{Rhodamine} 3 }&\color{Plum} 3\color{Rhodamine}
       3&\color{Rhodamine} 3\\ \end{ytableau}.$
\end{center}
Forgetting colors gives the following expansion of the Schur polynomial:
\begin{center}
$s_{\lambda(w)}=\begin{ytableau} 1\\2&2\\3&3&1\end{ytableau}+
\begin{ytableau} 1\\2&2\\3&3&2\end{ytableau}+
\begin{ytableau} 1\\2&2\\3&3&3\end{ytableau}+
\begin{ytableau} 1\\2&1\\3&2&1\end{ytableau}+
\begin{ytableau} 1\\2&1\\3&2&2\end{ytableau}+
\begin{ytableau} 1\\2&1\\3&3&1\end{ytableau}+
\begin{ytableau} 1\\2&1\\3&3&2\end{ytableau}+
\begin{ytableau} 1\\2&1\\3&3&3\end{ytableau}$.
\end{center}
In general, if $w$ is Grassmannian 
then $\lambda(w)$ is a (French) Young diagram. Moreover,
each cell of $T\in {\tt Prism}(w)$ uses only one number. (See Lemma~\ref{prop:grassredux}.)
Replacing each set in $T$ by the common value gives a \emph{reverse} semistandard tableau.
Thus ${\mathfrak P}_w=s_{\lambda}(w)$ follows. \qed
\end{example}

Prism tableaux provide a means to understand the $RC$-graphs of \cite{Bergeron.Billey, Fomin.Kirillov}.
We think of the  $\#{\mathcal Ess}(w)$-many semistandard tableaux
of a prism tableau $T$ as the ``dispersion'' of the associated $RC$-graph
through $T$. See Sections~4.1 and~4.3.

Minimality and the unstable triple condition bond the tableau of each color,
which is one reason why we prefer not to think of a prism tableau as merely a $\#{\mathcal Ess}(w)$-tuple:

\begin{example}[Unstable triples]
\label{exa:unstable}
Let $w=42513$.  Then $\#{\mathcal Ess}(w)=3$.  The minimal prism tableaux and their weights are:

\begin{center}
\begin{tabular}{|c||c|c|c|c|}\hline
\rule{0pt}{5ex}\rule[-4ex]{0pt}{0pt}  $T$ &
\ytableausetup{baseline}
\begin{ytableau}
{\color{Plum}1} {\color{blue}1}&{\color{blue}1}&{\color{blue}1}\\
{\color{Plum}2}{\color{Rhodamine}2}&{\color{Rhodamine}1}\\
{\color{Plum}3}{\color{Rhodamine}3}&{\color{Rhodamine}3}
\end{ytableau} &
\ytableausetup{baseline}
\begin{ytableau}
{\color{Plum}1} {\color{blue}1}&{\color{blue}1}&{\color{blue}1}\\
{\color{Plum}2}{\color{Rhodamine}1}&{\color{Rhodamine}1}\\
{\color{Plum}3}{\color{Rhodamine}3}&{\color{Rhodamine}3}
\end{ytableau}&
\ytableausetup{baseline}
\begin{ytableau}
{\color{Plum}1} {\color{blue}1}&{\color{blue}1}&{\color{blue}1}\\
{\color{Plum}2}{\color{Rhodamine}2}&{\color{Rhodamine}1}\\
{\color{Plum}3}{\color{Rhodamine}3}&{\color{Rhodamine}2}
\end{ytableau}&
\ytableausetup{baseline}
\begin{ytableau}
{\color{Plum}1} {\color{blue}1}&{\color{blue}1}&{\color{blue}1}\\
{\color{Plum}2}{\color{Rhodamine}1}&{\color{Rhodamine}1}\\
{\color{Plum}3}{\color{Rhodamine}3}&{\color{Rhodamine}2}
\end{ytableau}\\ \hline
\rule{0pt}{2.6ex}\rule[-1.2ex]{0pt}{0pt} ${\tt wt}(T)$ &
$x_1^3x_2x_3^2$ &$x_1^3x_2x_3^2$ & $x_1^3x_2^2x_3$ & $x_1^3x_2^2x_3$\\\hline
\end{tabular}
\end{center}
The second and the fourth tableaux have an unstably paired label. In both tableaux, the pink $1$ in the second antidiagonal is replaceable by a pink $2$.  So  ${\mathfrak S}_w=x_1^3x_2x_3^2+x_1^3x_2^2x_3$.\qed
\end{example}

\ 

\begin{table}[!h]
\label{table:S4}
\ytableausetup{smalltableaux,aligntableaux=center,nobaseline}
\begin{tabular}{|p{.85cm}|p{5.3cm}|p{3.4cm}||p{1cm}|p{2.6cm}|p{2cm}|}
      \hline \begin{center}$\mathfrak S_{1234}$\end{center} & \begin{center}$\emptyset$\end{center} & \begin{center} $1$ \end{center} & \begin{center}$\mathfrak S_{3124}$\end{center}&\begin{center}\begin{ytableau}\color{blue} 1&\color{blue} 1\\ \end{ytableau} \end{center}&\begin{center}$x_1^2$ \end{center}\\ 
      \hline \begin{center}$\mathfrak S_{1243}$\end{center}& \begin{center}\begin{ytableau}\color{blue} 1\\ \end{ytableau} + \begin{ytableau}\color{blue} 2\\ \end{ytableau} + \begin{ytableau}\color{blue} 3\\ \end{ytableau}  \end{center}&\begin{center}$x_1 + x_2 + x_3$ \end{center} & \begin{center}$\mathfrak S_{3142}$\end{center}&\begin{center}\begin{ytableau}\color{blue} 1&\color{blue} 1\\ \color{Plum} 1\\ \color{Plum} 2\\ \end{ytableau} + \begin{ytableau}\color{blue} 1&\color{blue} 1\\ \color{Plum} 1\\ \color{Plum} 3\\ \end{ytableau} \end{center}&\begin{center}$x_1^2x_2 + x_1^2x_3$ \end{center}\\ 
      \hline \begin{center}$\mathfrak S_{1324}$\end{center}&\begin{center}\begin{ytableau}\color{blue} 1\\ \end{ytableau} + \begin{ytableau}\color{blue} 2\\ \end{ytableau} \end{center}&\begin{center}$x_1 + x_2$ \end{center} & \begin{center}$\mathfrak S_{3214}$\end{center}&\begin{center}\begin{ytableau}\color{blue} 1\color{Plum} 1&\color{blue} 1\\ \color{Plum} 2\\ \end{ytableau} \end{center}&\begin{center}$x_1^2x_2$ \end{center}\\ 
      \hline \begin{center}$\mathfrak S_{1342}$\end{center}&\begin{center}\begin{ytableau}\color{blue} 2\\ \color{blue} 3\\ \end{ytableau} + \begin{ytableau}\color{blue} 1\\ \color{blue} 2\\ \end{ytableau} + \begin{ytableau}\color{blue} 1\\ \color{blue} 3\\ \end{ytableau} \end{center}&\begin{center}$x_2x_3 + x_1x_2 + x_1x_3$ \end{center}
& \begin{center}$\mathfrak S_{3241}$\end{center}&\begin{center}\begin{ytableau}\color{blue} 1\color{Plum} 1&\color{blue} 1\\ \color{Plum} 2\\ \color{Plum} 3\\ \end{ytableau} \end{center}&\begin{center}$x_1^2x_2x_3$ \end{center}\\ 
      \hline \begin{center}$\mathfrak S_{1423}$\end{center}&\begin{center}\begin{ytableau}\color{blue} 2&\color{blue} 2\\ \end{ytableau} + \begin{ytableau}\color{blue} 1&\color{blue} 1\\ \end{ytableau} + \begin{ytableau}\color{blue} 2&\color{blue} 1\\ \end{ytableau} \end{center}&\begin{center}$x_2^2 + x_1^2 + x_1x_2$ \end{center} &
\begin{center}$\mathfrak S_{3412}$\end{center}&\begin{center}\begin{ytableau}\color{blue} 1&\color{blue} 1\\ \color{blue} 2&\color{blue} 2\\ \end{ytableau} \end{center}&\begin{center}$x_1^2x_2^2$ \end{center}\\ 
      \hline \begin{center}$\mathfrak S_{1432}$\end{center}&\begin{center}\begin{ytableau}\color{blue} 1\color{Plum} 1&\color{blue} 1\\ \color{Plum} 2\\ \end{ytableau} + \begin{ytableau}\color{blue} 1\color{Plum} 1&\color{blue} 1\\ \color{Plum} 3\\ \end{ytableau} + \begin{ytableau}\color{blue} 2\color{Plum} 1&\color{blue} 2\\ \color{Plum} 2\\ \end{ytableau} + \begin{ytableau}\color{blue} 2\color{Plum} 2&\color{blue} 1\\ \color{Plum} 3\\ \end{ytableau} + \begin{ytableau}\color{blue} 2\color{Plum} 2&\color{blue} 2\\ \color{Plum} 3\\ \end{ytableau} \end{center}&\begin{center}$x_1^2x_2 + x_1^2x_3 + x_1x_2^2 + x_1x_2x_3 + x_2^2x_3$ \end{center} &  \begin{center}$\mathfrak S_{3421}$\end{center}&\begin{center}\begin{ytableau}\color{blue} 1\color{Plum} 1&\color{blue} 1\\ \color{blue} 2\color{Plum} 2&\color{blue} 2\\ \color{Plum} 3\\ \end{ytableau} \end{center}&\begin{center}$x_1^2x_2^2x_3$ \end{center}\\ 
      \hline \begin{center}$\mathfrak S_{2134}$\end{center}&\begin{center}\begin{ytableau}\color{blue} 1\\ \end{ytableau} \end{center}&\begin{center}$x_1$ \end{center} &
\begin{center}$\mathfrak S_{4123}$\end{center}&\begin{center}\begin{ytableau}\color{blue} 1&\color{blue} 1&\color{blue} 1\\ \end{ytableau} \end{center}&\begin{center}$x_1^3$ \end{center}\\ 
      \hline \begin{center}$\mathfrak S_{2143}$\end{center}&\begin{center} \begin{ytableau}\color{blue} 1\\ \color{Plum} 1\\ \end{ytableau} + \begin{ytableau}\color{blue} 1\\ \color{Plum} 2\\ \end{ytableau} + \begin{ytableau}\color{blue} 1\\ \color{Plum} 3\\ \end{ytableau} \end{center}&\begin{center}$x_1^2 + x_1x_2+x_1 x_3$ \end{center} & \begin{center}$\mathfrak S_{4132}$\end{center}&\begin{center}\begin{ytableau}\color{blue} 1&\color{blue} 1&\color{blue} 1\\ \color{Plum} 1\\ \color{Plum} 2\\ \end{ytableau} + \begin{ytableau}\color{blue} 1&\color{blue} 1&\color{blue} 1\\ \color{Plum} 1\\ \color{Plum} 3\\ \end{ytableau} \end{center}&\begin{center}$x_1^3x_2 + x_1^3x_3$ \end{center}\\ 
      \hline \begin{center}$\mathfrak S_{2314}$\end{center}&\begin{center}\begin{ytableau}\color{blue} 1\\ \color{blue} 2\\ \end{ytableau} \end{center}&\begin{center}$x_1x_2$ \end{center} & \begin{center}$\mathfrak S_{4213}$\end{center}&\begin{center}\begin{ytableau}\color{blue} 1\color{Plum} 1&\color{blue} 1&\color{blue} 1\\ \color{Plum} 2\\ \end{ytableau} \end{center}&\begin{center}$x_1^3x_2$ \end{center}\\ 
      \hline \begin{center}$\mathfrak S_{2341}$\end{center}&\begin{center}\begin{ytableau}\color{blue} 1\\ \color{blue} 2\\ \color{blue} 3\\ \end{ytableau} \end{center}&\begin{center}$x_1x_2x_3$ \end{center}& \begin{center}$\mathfrak S_{4231}$\end{center}&\begin{center}\begin{ytableau}\color{blue} 1\color{Plum} 1&\color{blue} 1&\color{blue} 1\\ \color{Plum} 2\\ \color{Plum} 3\\ \end{ytableau} \end{center}&\begin{center}$x_1^3x_2x_3$ \end{center}\\ 
      \hline \begin{center}$\mathfrak S_{2413}$\end{center}&\begin{center}\begin{ytableau}\color{blue} 1\\ \color{blue} 2\color{Plum} 2&\color{Plum} 2\\ \end{ytableau} + \begin{ytableau}\color{blue} 1\\ \color{blue} 2\color{Plum} 2&\color{Plum} 1\\ \end{ytableau} \end{center}&\begin{center}$x_1x_2^2 + x_1^2x_2$ \end{center}& \begin{center}$\mathfrak S_{4312}$\end{center}&\begin{center}\begin{ytableau}\color{blue} 1\color{Plum} 1&\color{blue} 1\color{Plum} 1&\color{blue} 1\\ \color{Plum} 2&\color{Plum} 2\\ \end{ytableau} \end{center}&\begin{center}$x_1^3x_2^2$ \end{center}\\ 
      \hline \begin{center}$\mathfrak S_{2431}$\end{center}&\begin{center}\begin{ytableau}\color{Plum} 1\\ \color{blue} 2\color{Plum} 2&\color{blue} 2\\ \color{Plum} 3\\ \end{ytableau} + \begin{ytableau}\color{Plum} 1\\ \color{blue} 2\color{Plum} 2&\color{blue} 1\\ \color{Plum} 3\\ \end{ytableau} \end{center}&\begin{center}$x_1x_2^2x_3 + x_1^2x_2x_3$ \end{center} & \begin{center}$\mathfrak S_{4321}$\end{center}&\begin{center}\begin{ytableau} \resizebox{9pt}{!}{\color{blue} 1\color{Plum} 1\color{Rhodamine} 1}&\color{blue} 1\color{Plum} 1&\color{blue} 1\\ \color{Plum} 2\color{Rhodamine} 2&\color{Plum} 2\\ \color{Rhodamine} 3\\ \end{ytableau} \end{center}&\begin{center}$x_1^3x_2^2x_3$ \end{center}\\ 
\hline
\end{tabular}

\bigskip
 \caption{${\tt Prism}(w)$  and ${\mathfrak S}_w$ for all $w\in S_4$}
\end{table}

\subsection{Organization}
In Section~2 we present the general geometric perspective behind the rule and its proof.
In the case at hand, we need to study the Stanley-Reisner simplical complex associated to the Gr\"{o}bner limit of $X_w$; this
is done in Section~3. In Section~4, we collect some additional results and remarks.

\section{Main idea of the model and its proof}

Let ${\sf G}={\sf GL}_n$ and ${\sf B}$ and ${\sf B}^+$ the Borel subgroups of lower and upper triangular matrices in ${\sf G}$.  Identify the {\bf flag variety} with the coset space ${\sf B}\backslash {\sf G}$.  Let ${\sf T}$ be the maximal torus in ${\sf B}$.  Suppose ${\mathfrak X}\subset {\sf B}\backslash {\sf G}$ is an arbitrary subvariety and $\pi:{\sf G}\twoheadrightarrow {\sf B}\backslash {\sf G}$ is the natural projection.  Then 
\[X=\overline{\pi^{-1}({\mathfrak X})}\subseteq {\sf Mat}_{n\times n}\] 
carries a left ${\sf B}$ action and thus the action of ${\sf T}$. Therefore, one can speak of the equivariant cohomology class 
\[[X]_T\in H_{\sf T}({\sf Mat}_{n\times n})\cong {\mathbb Z}[x_1,\ldots,x_n].\] 
Moreover, the polynomial $[X]_{\sf T}$ is a 
coset representative under the Borel presentation of 
\[[{\mathfrak X}]\in H^{\star}({\sf B}\backslash {\sf G}, {\mathbb Z})\cong {\mathbb Z}[x_1,\ldots,x_n]/I^{S_n},\]
where $I^{S_n}$ is the ideal generated by (non-constant) elementary symmetric polynomials. This is a key perspective of work of A.~Knutson-E.~Miller \cite{Knutson.Miller:annals} when ${\mathfrak X}$ is a Schubert variety. 

Let $Y\subseteq {\sf Mat}_{n\times n}$ be an equidimensional, reduced union of coordinate subspaces.  Given ${\mathcal P}\subset n\times n$, we represent $\mathcal P$ visually as a collection of $+$'s in the $n\times n$ grid.  We say $\mathcal P$ is a {\bf plus diagram} for $Y$, if \[{\mathcal L}_{\mathcal P}:=\{M\in {\sf Mat}_{n\times n}: M_{i,j}=0 \text{\ if }(i,j)\in \mathcal P\}\subset Y.\]  

 Let ${\tt Plus}(Y)$ be the set of all such plus diagrams. Let ${\tt MinPlus}(Y)$ be the set of minimal plus diagrams, i.e., those ${\mathcal P}$ for which removing any $+$ 
would not return an element of ${\tt Plus}(Y)$. We refer to the union of plus diagrams as an {\bf overlay} to emphasize whenever $(i,j)$ is in $\mathcal P$ or $\mathcal P'$, the diagram for $\mathcal P\cup \mathcal P'$ also has a $+$ in position $(i,j)$.

Each ${\mathcal P}$ corresponds $1:1$ to a face of the Stanley-Reisner complex $\Delta_{Y}$. 
Let $\Delta_{n\times n}$ be the power set of $\{(i,j):1\leq i,j\leq n\}$.  Then $\Delta_Y\subseteq \Delta_{n\times n}$
and for each ${\mathcal P}$ one has 
the face \[{\mathcal F}_{\mathcal P}=\{(i,j):1\leq i,j\leq n \text{ and } (i,j)\not\in \mathcal P\}.\]  
The faces of $\Delta_Y$ are ordered by reverse containment of their
plus diagrams.
Thus, facets (maximal dimensional faces) of $\Delta_Y$ coincide with elements of ${\tt MinPlus}(Y)$.  
In addition,
taking the overlay of $\mathcal P\in {\tt Plus}(Y)$ and $\mathcal Q\in {\tt Plus}(Z)$ corresponds to intersecting faces in the Stanley-Reisner complex:  \[\mathcal F_{\mathcal P\cup \mathcal Q}=\mathcal F_{\mathcal P}\cap \mathcal F_{\mathcal Q}\in \Delta_{Y}\cap \Delta_{Z}.\]

Through the interpretation of $[Y]_T$ as a \emph{multidegree}, one
may express $[Y]_{T}$ as a generating series over ${\tt MinPlus}(Y)$.  
That is, 
\begin{equation}
\label{eqn:theweight}
[Y]_{T}=\sum_{{\mathcal P}\in {\tt MinPlus}(Y)} {\tt wt}({\mathcal P}), 
\text{ \ where ${\tt wt}({\mathcal P})=\prod_{i=1}^n x_i^{\text{$\#$ of $+$'s in row $i$}}$.}
\end{equation}
For details, the reader may
consult \cite{Miller.Sturmfels}; see Chapter~1 and Chapter~8 (and its notes). 

\begin{example}
Let $Y\subset {\sf Mat}_{2\times 2}$ be the zero locus of $z_{1,1}z_{1,2}$, i.e., the union of two coordinate
hyperplanes $\{z_{1,1}=0\}\cup \{z_{1,2}=0\}$. Then
\[\left(\begin{matrix}
+ & \cdot\\
\cdot & \cdot \end{matrix}\right),
\left(\begin{matrix}
\cdot & +\\
\cdot & \cdot \end{matrix}\right),
\left(\begin{matrix}
+ & +\\
\cdot & \cdot \end{matrix}\right)\in {\tt Plus}(Y)
\]
(the first two are in ${\tt MinPlus}(Y)$).
The complex $\Delta_Y$ is the $2$-dimensional ball depicted below. 
\[\begin{picture}(120,115)
\label{twoTriangles}
\put(10,20){\includegraphics[scale=0.6]{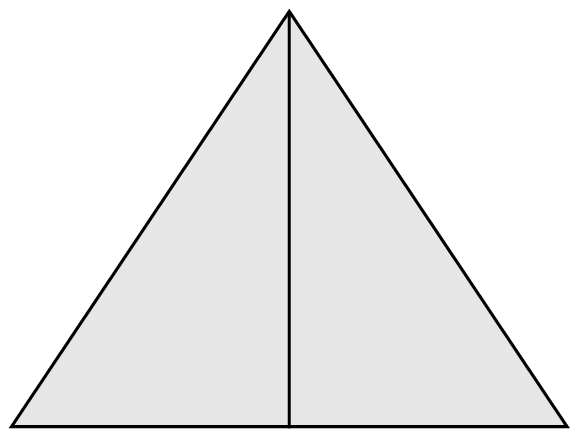}}
\put(29,37){\resizebox{30pt}{!}{$\left(\begin{matrix}+ & \cdot\\\cdot & \cdot \end{matrix}\right)$}}
\put(61,37){\resizebox{30pt}{!}{$\left(\begin{matrix}\cdot & +\\\cdot & \cdot \end{matrix}\right)$}}
\put(45,4){\resizebox{30pt}{!}{$\left(\begin{matrix}+ & +\\\cdot & + \end{matrix}\right)$}}
\put(-22,15){\resizebox{30pt}{!}{$\left(\begin{matrix}+ &\cdot \\+ & + \end{matrix}\right)$}}
\put(110,15){\resizebox{30pt}{!}{$\left(\begin{matrix}\cdot & +\\+ & + \end{matrix}\right)$}}
\put(45,105){\resizebox{30pt}{!}{$\left(\begin{matrix}+ & +\\+ & \cdot \end{matrix}\right)$}}
\end{picture}\]
Here $[Y]_T=2x_1$.\qed
\end{example}

Suppose $\prec$ is any term order on ${\mathbb C}[{\sf Mat}_{n\times n}]$ and $X':={\tt init}_{\prec}X$.
Since $X$ is ${\sf T}$-stable the same is true of 
$X'$; thus $[X']_{\sf T}$ is defined. Gr\"{o}bner degeneration preserves the ${\sf T}$-equivariant class, so 
$[X]_{\sf T}=[X']_{\sf T}$. Suppose $X'$ is reduced, and hence a reduced union of coordinate subspaces. 
Since ${\mathfrak X}$ was assumed to be irreducible, then $X$ is
irreducible. So by \cite[Theorem~1]{Kalkbrener.Sturmfels} the Stanley-Reisner complex $\Delta_{X'}$ of $X'$ is equidimensional. Hence we may apply
the discussion above using $Y=X'$ to compute $[X']_{\sf T}=[X]_{\sf T}$.

We are interested in understanding $\Delta_{X'}$ under certain hypotheses on
$X$. Assume that we have a collection of varieties 
$X,X_1,\ldots,X_m\subseteq V\cong \mathbb C^N$ such that
\begin{equation}
\label{eqn:spectro1}
X=X_1\cap X_2\cap \cdots\cap X_k.
\end{equation}
Assume $\prec$ is a term order on $\mathbb C[V]$ that defines a Gr\"obner degeneration of these varieties so that each Gr\"obner limit
\begin{equation}
\label{eqn:spectro2}
X':={\tt init}_\prec X, \: X_i':={\tt init}_{\prec} X_i \text{ \ (for $i=1,2,\ldots,k$) is reduced.}
\end{equation}
Finally, suppose
\begin{equation}
\label{eqn:spectro3}
X'=X_1'\cap X_2'\cap\cdots \cap X_k'.
\end{equation}
Call $\{X_i\}$ a {\bf $\prec$-spectrum} for $X$. 

To construct a cheap example, pick any Grobner basis $G=\{g_1,\ldots,g_M\}$ with square-free lead terms to define $X$. Now
partition $G=G_1\cup G_2\cup \cdots\cup G_s$ and set $X_k$ to be cut out
by $G_k$.  On the other hand, a motivating example is A.~Knutson \cite[Theorem~6]{Knutson:Frob}: given a term order $\prec$ (satisfying a hypothesis), there is a stratification of  
$V$ into a poset of varieties (ordered by inclusion) with the additional feature that each stratum $X$ admits a $\prec$-spectrum 
using higher strata. 

How can a $\prec$-spectrum be used to understand the combinatorics of $[X']_{\sf T}$? Here is a simple observation:

\excise{Given ${\mathcal P}\in {\tt Plus}(Y)$ and ${\mathcal Q}\in {\tt Plus}(Z)$, define the {\bf overlay} ${\mathcal P}\cup {\mathcal Q}$ to be the assignment of $+$'s to the $n\times n$ grid such that $+$ appears in position $(i,j)$ if a $+$ appears in that position in either ${\mathcal P}$ or ${\mathcal Q}$.  Clearly, the overlay of two plus diagrams corresponds to the intersection of faces of the corresponding Stanley-Reisner complexes, $\mathcal F_{\mathcal P\cup \mathcal Q}=\mathcal F_{\mathcal P}\cap \mathcal F_{\mathcal Q}$.} 

\begin{lemma}
\label{lemma:mainobs}
Let $\{X_i\}$ be a $\prec$-spectrum for $X$. Then 
\begin{itemize}
\item[(I)]
${\tt Plus}(X')=\{{\mathcal P}_1\cup\cdots\cup
{\mathcal P}_k: {\mathcal P}_i\in {\tt Plus}(X_i').\}$
\item[(II)] ${\tt MinPlus}(X')\subseteq \{{\mathcal P}_1\cup\cdots\cup
{\mathcal P}_k: {\mathcal P}_i\in {\tt MinPlus}(X_i').\}$
\end{itemize}
\end{lemma}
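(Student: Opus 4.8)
The plan is to translate each statement into the language of the Stanley--Reisner complexes $\Delta_{X_i'}$ and use the fact that the overlay of plus diagrams corresponds to intersection of faces, together with the crucial hypothesis \eqref{eqn:spectro3} that $X' = X_1' \cap \cdots \cap X_k'$. For part (I), I would first recall that $\mathcal P \in {\tt Plus}(Z)$ for a reduced union of coordinate subspaces $Z$ if and only if $\mathcal F_{\mathcal P} \in \Delta_Z$, i.e.\ the coordinate subspace $\mathcal L_{\mathcal P}$ is contained in \emph{some} component of $Z$ (equivalently in $Z$ itself). Now a coordinate subspace $\mathcal L_{\mathcal P}$ lies in $X' = \bigcap_i X_i'$ precisely when it lies in each $X_i'$, i.e.\ when $\mathcal P \in {\tt Plus}(X_i')$ for every $i$; but $\mathcal L_{\mathcal P} \subseteq X_i'$ is equivalent to saying that the ``sub-plus-diagram'' obtained by keeping only the constraints actually needed is in ${\tt Plus}(X_i')$, and since a plus diagram for $X_i'$ can always be enlarged (adding more $+$'s only shrinks $\mathcal L_{\mathcal P}$, keeping it inside $X_i'$), we get $\mathcal P \in {\tt Plus}(X_i')$ directly. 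Thus ${\tt Plus}(X') = \bigcap_i {\tt Plus}(X_i')$. The remaining point is the purely combinatorial identity $\bigcap_i {\tt Plus}(X_i') = \{\mathcal P_1 \cup \cdots \cup \mathcal P_k : \mathcal P_i \in {\tt Plus}(X_i')\}$: the inclusion $\subseteq$ is trivial (take all $\mathcal P_i$ equal to $\mathcal P$), and for $\supseteq$, if $\mathcal P = \mathcal P_1 \cup \cdots \cup \mathcal P_k$ with $\mathcal P_i \in {\tt Plus}(X_i')$, then $\mathcal P \supseteq \mathcal P_i$ so $\mathcal L_{\mathcal P} \subseteq \mathcal L_{\mathcal P_i} \subseteq X_i'$ for each $i$, hence $\mathcal L_{\mathcal P} \subseteq X'$ and $\mathcal P \in {\tt Plus}(X')$.

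For part (II), suppose $\mathcal P \in {\tt MinPlus}(X')$. By part (I) we may write $\mathcal P = \mathcal P_1 \cup \cdots \cup \mathcal P_k$ with $\mathcal P_i \in {\tt Plus}(X_i')$. For each $i$, shrink $\mathcal P_i$ to a minimal plus diagram $\mathcal P_i' \subseteq \mathcal P_i$ with $\mathcal P_i' \in {\tt MinPlus}(X_i')$ (possible since ${\tt Plus}(X_i')$ is a finite poset under inclusion and is closed upward). Then $\mathcal P_1' \cup \cdots \cup \mathcal P_k' \subseteq \mathcal P$ and, by part (I) again, $\mathcal P_1' \cup \cdots \cup \mathcal P_k' \in {\tt Plus}(X')$. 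By minimality of $\mathcal P$ in ${\tt Plus}(X')$, we must have $\mathcal P_1' \cup \cdots \cup \mathcal P_k' = \mathcal P$, exhibiting $\mathcal P$ as an overlay of elements of ${\tt MinPlus}(X_i')$, as desired.

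I expect the only real subtlety to be the equivalence ``$\mathcal L_{\mathcal P} \subseteq X_i' \iff \mathcal P \in {\tt Plus}(X_i')$'' and, relatedly, the claim that ${\tt Plus}(Z)$ is closed under adding $+$'s and under taking the overlay of two of its members; these follow immediately from the set-theoretic fact that $\mathcal P \subseteq \mathcal Q$ implies $\mathcal L_{\mathcal Q} \subseteq \mathcal L_{\mathcal P}$, but they should be stated explicitly since the whole argument rests on them. The one place where the hypotheses of a $\prec$-spectrum genuinely enter is \eqref{eqn:spectro3}: without $X' = X_1' \cap \cdots \cap X_k'$ the identification ${\tt Plus}(X') = \bigcap_i {\tt Plus}(X_i')$ fails, so I would flag that this is the single ingredient doing the work, while \eqref{eqn:spectro1} and \eqref{eqn:spectro2} are needed only to guarantee that all the objects $\Delta_{X_i'}$ are well-defined reduced complexes so that ${\tt Plus}(X_i')$ behaves as a simplicial complex.
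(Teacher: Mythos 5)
Your proposal is correct and follows essentially the same argument as the paper: both parts reduce to the observations that $\mathcal P\in{\tt Plus}(Y)$ means $\mathcal L_{\mathcal P}\subseteq Y$, that overlaying diagrams intersects the corresponding subspaces, and that hypothesis (2.4) lets you pass between $X'$ and the $X_i'$, with (II) obtained by shrinking each component to a minimal diagram and invoking minimality of $\mathcal P$. The Stanley--Reisner framing and the explicit remark about which spectrum hypotheses are used are harmless elaborations, not a different route.
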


\begin{proof}
(I): Let $\mathcal P\in {\tt Plus}(X')$.  Then $\mathcal L_\mathcal P\subseteq X' \subseteq X_i'$ for all $i$.  Therefore $\mathcal P\in {\tt Plus}(X_i')$ and trivially $\mathcal P=\mathcal P\cup \ldots \cup \mathcal P$, proving ``$\subseteq$''. For
the other containment,  suppose ${\mathcal P}_i\in {\tt Plus}(X_i')$ for $1\leq i\leq k$ and let $\mathcal P=\mathcal P_1\cup \cdots \cup \mathcal P_k$.  Then 
$\mathcal L_{\mathcal P}=\mathcal L_{\mathcal P_1}\cap ...\cap \mathcal L_{\mathcal P_k}$ and hence $\mathcal L_{\mathcal P}\subseteq \mathcal L_{\mathcal P_i}\subseteq  X_i'$. So $\mathcal P \in {\tt Plus}(X_i')$  for each $i$, which implies $\mathcal P\in {\tt Plus}(X')$.

(II):  Let $\mathcal P\in {\tt MinPlus}(X')$. By (I), $\mathcal P\in {\tt Plus}(X_i')$ for each $i$.  Then there exists $\mathcal P_i\in  {\tt MinPlus}(X_i')$ so that $\mathcal P_i\subseteq \mathcal P$.  Then $\mathcal P\supseteq \mathcal P_1\cup \cdots \cup \mathcal P_k\in {\tt Plus}(X')$ by (I).  As $\mathcal P$ is minimal, this is an equality.
\excise{
Suppose we have $\Delta=\Delta_1\cap \ldots \cap \Delta_k$, an intersection of simplicial complexes.  Given a facet $\mathcal F$ for $\Delta$, by definition $\mathcal F=\mathcal F_1\cap \ldots \cap \mathcal F_k$, where $\mathcal F_i$ is some face of $\Delta_i$.  Each $\mathcal F_i$ is contained in some facet $\mathcal F_i'$.  Then $\mathcal F=\mathcal F_1\cap \ldots \cap \mathcal F_k\supseteq \mathcal F_1'\cap \ldots \cap \mathcal F_k'$.  But this is actually equality, since $\mathcal F$ is a facet, i.e. a maximal face.  So any facet for $\mathcal F$ can be realized as an intersection of facets for the components.

By (I), $\Delta_{X'}=\Delta_{X_1'}\cap \ldots \cap \Delta_{X_k'}$.  Translating to plus diagrams, this is exactly the statement that $\mathcal P$ is an overlay of minimal plus diagrams for the members of the $\prec$-spectrum.
\excise{
Without loss of generality, suppose $X$ has $\prec$-spectra $\{X_1, X_2\}$.  Suppose we are given $\mathcal Q\in  {\tt MinPlus}(X')$. By (I), 
\[\mathcal Q=\mathcal P_1\cup \mathcal P_2, \mbox{\ with $\mathcal P_i\in {\tt Plus}(X_i')$.}\]  
If $\mathcal P_i$ is not minimal, it must contain some minimal plus diagram, say $\tilde {\mathcal P_i}$.  Set $\tilde{ \mathcal Q}=\tilde {\mathcal P_1}\cup \tilde {\mathcal P_2}$.  We must show $\mathcal Q=\tilde{\mathcal Q}$. We will appeal to the Stanley-Reisner complexes of $X'$ and the $X_i$'s.  
Since $\tilde{\mathcal P_i}\subseteq \mathcal P_i$, we have $\mathcal F_{\tilde{\mathcal P_i}}\supseteq \mathcal F_{\mathcal P_i}$.  Then  
\[\mathcal F_{\tilde{ \mathcal Q}}= \mathcal F_{\tilde{ \mathcal P_1}}\cap \mathcal F_{\tilde {\mathcal P_2}}\supseteq \mathcal F_{\mathcal P_1}\cap \mathcal F_{\mathcal P_2}=\mathcal F_{\mathcal Q}.\]    
By (I), $\mathcal F_{\mathcal Q}$ and $\mathcal F_{\tilde {\mathcal Q}}$ are both in $\Delta_{X'}$ and $\mathcal F_{\mathcal Q}$ is a facet in the complex.  So it must be that 
$\mathcal F_{\mathcal Q}=\mathcal F_{\tilde {\mathcal Q}}$.  
Then $\mathcal Q=\tilde{ \mathcal Q}$.
}}
\end{proof}

Our point is that in good cases, the plus diagrams of $X_i'$ are ``simpler'' to
understand than those of $X$.  Lemma~\ref{lemma:mainobs}(II) says that one can think of
each ${\mathcal P}\in {\tt MinPlus}(X)$ as an overlay 
${\mathcal P}={\mathcal P}_{1}\cup\cdots\cup {\mathcal P}_k$ of these simpler ${\mathcal P}_i$.
Of course, this representation is not unique in general,
so one can make a \emph{choice} of representation for
each ${\mathcal P}$. The hope is to transfer understanding of the combinatorics of ${\tt MinPlus}(X_i)$ to the combinatorics of ${\tt MinPlus}(X)$. 

\section{Proof of the Theorem~\ref{theorem:main}}

We now carry out the ideas described in Section~2 in 
the case of Schubert varieties.

\subsection{Matrix Schubert varieties and Schubert polynomials}

The flag variety ${\sf B}\backslash {\sf G}$ decomposes into {\bf Schubert cells} 
${\mathfrak X}_w^\circ:={\sf B}\backslash {\sf B}w{\sf B}^+$ indexed by $w\in S_n$. The {\bf Schubert variety} 
is the Zariski-closure 
${\mathfrak X}_w:=\overline{\mathfrak X_w^\circ}$. 
The {\bf matrix Schubert variety} is 
\[X_w:=\overline{\pi^{-1}({\mathfrak X}_w)}\subset {\sf Mat}_{n\times n}.\]

Let $Z=(z_{ij})_{1\leq i,j\leq n}$ be the generic $n\times n$ matrix. The {\bf Schubert determinantal ideal} is
\[I_w=\langle\text{$r_w(i,j)+1$ minors of the
the northwest $i\times j$ submatrix of $Z$}\rangle\subset
\mathbb C[{\sf Mat}_{n\times n}].\] 
In \cite[Lemma~3.10]{Fulton:duke} it is proved that $I_w$ cuts out
$X_w$ scheme-theoretically. Moreover in \emph{loc. cit.} it is shown that $I_w$ is generated by the smaller set of generators coming from
those $(i,j)\in {\mathcal Ess}(w)$. 

By \cite[Theorem~A]{Knutson.Miller:annals}, 
\[[X_w]_{\sf T}={\mathfrak S}_w(x_1,\ldots,x_n)\in H_{\sf T}({\sf Mat}_{n\times n}).\]
Moreover, let $\prec_{\tt anti}$ be any {\bf antidiagonal term order} on 
$\mathbb C[{{\sf Mat}}_{n\times n}]$, i.e., one that picks off the antidiagonal term of any minor of $Z$. In \cite[Theorem~B]{Knutson.Miller:annals} it is shown that ${\tt MinPlus}(X_w')$ are in a transparent bijection with the $RC$-graphs  of \cite{Bergeron.Billey} (cf.~\cite{Fomin.Kirillov}). 

For each $e\in {\mathcal Ess}(w)$, there is a unique 
biGrassmannian permutation $u_e$ such that $r_{u_e}(e)=r_w(e)$ and  ${\mathcal Ess}(u_e)=\{e\}$ \cite{LS:trellis}. Let 
\[{\tt biGrass}(w):=\{u_e:e\in {\mathcal Ess}(w)\}=\{u_1,\ldots, u_k\}.\] 
Call $\{X_{u_1},\ldots, X_{u_k}\}$  the  {\bf biGrassmannian $\prec_{\tt anti}$-spectrum} for $X_w$. By \cite[Section~7.2]{Knutson:Frob},
$\{X_{u_i}\}$ indeed gives a $\prec_{\tt anti}$-spectrum for $X_w$ over ${\mathbb Q}$. This result can also be readily obtained (over ${\mathbb Z}$)
if one assumes the Gr\"{o}bner basis result \cite[Theorem B]{Knutson.Miller:annals}. (It should be emphasized that one of the points of 
 \cite[Section~7.2]{Knutson:Frob} is to reprove said Gr\"{o}bner basis theorem more easily.)

\begin{figure}
\includegraphics[scale=.6]{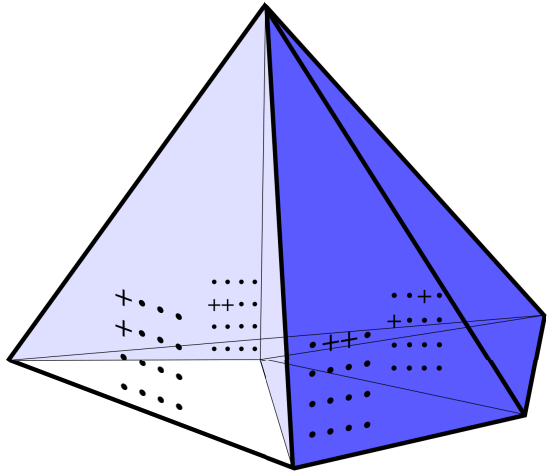}
\excise{
\begin{picture}(200,200)

\put(0,0){\includegraphics[scale=1.75]{plainSR.eps}}
\put(45,40){\resizebox{!}{17pt}{$
\left(\begin{matrix} 
\cdot & \cdot&\cdot&\cdot \\
+ & +&\cdot&\cdot \\
\cdot & \cdot&\cdot&\cdot \\
\cdot & \cdot&\cdot&\cdot \\
\end{matrix}\right)$}}
\put(147,60){\resizebox{!}{13pt}{$
\left(\begin{matrix} 
\cdot & \cdot&+&\cdot \\
+ & \cdot&\cdot&\cdot \\
\cdot & \cdot&\cdot&\cdot \\
\cdot & \cdot&\cdot&\cdot \\
\end{matrix}\right)$}}

\put(125,34){\resizebox{!}{19pt}{$
\left(\begin{matrix} 
+ & \cdot&\cdot&\cdot \\
+ & \cdot&\cdot&\cdot \\
\cdot & \cdot&\cdot&\cdot \\
\cdot & \cdot&\cdot&\cdot \\
\end{matrix}\right)$}}

\end{picture}
}

\caption{The Stanley-Reisner complexes for $X'_{1423}$ and $X'_{2314}$ intersect to give the complex for $X'_{2413}$. These complexes
are a multicone over the depicted complex.}
\label{srComplex}
\end{figure}

\begin{example}
$X=X_{2413}$ has biGrassmannian $\prec_{\tt anti}$-spectrum 
$\{X_1=X_{1423}, X_2=X_{2314}\}$.  
Here $\mathbb C[{\sf Mat}_{n\times n}]=\mathbb C[z_{i,j}: 1\leq i,j\leq 4]$ and one can
check:
\[I_{u_1}=\left\langle\left|\begin{matrix}
z_{1,1} & z_{1,2}\\ z_{2,1} & z_{2,2}\end{matrix}\right|,  \
\left|\begin{matrix}
z_{1,1} & z_{1,3}\\ z_{2,1} & z_{2,3}\end{matrix}\right|,  \
\left|\begin{matrix}
z_{1,2} & z_{1,3}\\ z_{2,2} & z_{2,3}\end{matrix}\right|\right\rangle, \quad
I_{u_2}=\left\langle z_{1,1}, z_{2,1}\right\rangle, \quad I_w=I_{u_1}+I_{u_2}.
\]
The $\prec_{\tt anti}$-Gr\"obner limits are defined by
\[I_{u_1}'=\langle z_{2,1}z_{1,2}, \ z_{2,1}z_{1,3}, z_{2,2}z_{1,3}\rangle, \quad
 I_{u_2}'=\langle z_{1,1}, z_{2,1}\rangle, \quad \ I_{w}'=I_{u_1}'+I_{u_2}'.\]

Since the prime decomposition of $I_{u_1}'$ is
\[I_{u_1'}=\langle z_{2,1}, z_{2,2}\rangle\cap \langle z_{2,1}, z_{1,3}\rangle\cap \langle z_{1,2}, z_{1,3}\rangle,\]
the facets of $\Delta_{X_1'}$ are labeled by: 
\begin{equation}
\label{eqn:firstfacets}
{\tt MinPlus}(X_1')=
\left\{
\left[\begin{array}{ccccc}
\cdot&\cdot&\cdot&\cdot\\
+&+&\cdot&\cdot\\
\cdot&\cdot&\cdot&\cdot\\
\cdot&\cdot&\cdot&\cdot\\
\end{array}\right],
\left[\begin{array}{ccccc}
\cdot&\cdot&+&\cdot\\
+&\cdot&\cdot&\cdot\\
\cdot&\cdot&\cdot&\cdot\\
\cdot&\cdot&\cdot&\cdot\\
\end{array}\right],
\left[\begin{array}{ccccc}
\cdot&+&+&\cdot\\
\cdot&\cdot&\cdot&\cdot\\
\cdot&\cdot&\cdot&\cdot\\
\cdot&\cdot&\cdot&\cdot\\
\end{array}\right]
\right\}.
\end{equation}
In Figure~\ref{srComplex}, these correspond to the indicated tetrahedra. 

Similarly, there is a single facet for $X'_{2}$ associated to the prime ideal $I_{u_2}$, labeled by:
\begin{equation}
\label{eqn:secondfacet}
\left\{\left[\begin{array}{ccccc}
+&\cdot&\cdot&\cdot\\
+&\cdot&\cdot&\cdot\\
\cdot&\cdot&\cdot&\cdot\\
\cdot&\cdot&\cdot&\cdot\\
\end{array}\right]
\right\}.
\end{equation}
This facet corresponds to the remaining tetrahedron. 

There are precisely two minimal overlays of the plus diagrams of (\ref{eqn:firstfacets}) with the plus diagram of (\ref{eqn:secondfacet}):
\[\left\{
\left[\begin{array}{ccccc}
+&\cdot&\cdot&\cdot\\
+&+&\cdot&\cdot\\
\cdot&\cdot&\cdot&\cdot\\
\cdot&\cdot&\cdot&\cdot\\
\end{array}\right],
\left[\begin{array}{ccccc}
+&\cdot&+&\cdot\\
+&\cdot&\cdot&\cdot\\
\cdot&\cdot&\cdot&\cdot\\
\cdot&\cdot&\cdot&\cdot\\
\end{array}\right]
\right\}.\]
This agrees with the prime decomposition
$I_w'=\langle z_{1,1}, z_{1,3}, z_{2,1}\rangle\cap \langle z_{1,1}, z_{2,1}, z_{2,2}\rangle$. Geometrically, these label the facets of $\Delta_{X'}$, pictured as light blue triangles in Figure~\ref{srComplex}.

Finally, applying the discussion of Section~2 (cf.~(\ref{eqn:theweight})) we see that
\[{\mathfrak S}_{u_1}=x_2^2+x_1 x_2+x_1^2,  \ \ 
{\mathfrak S}_{u_2}=x_1 x_2, \text{\  \ and 
${\mathfrak S}_w=x_1 x_2^2+x_1^2 x_2$}\]
(where the terms in each Schubert polynomial correspond respectively to the plus diagrams listed above).
\qed
\end{example}

\begin{example}[Digression: diagonal term orders]
Fix $w=2143\in S_4$ and let $\prec_{\tt diag}$ be any {\bf diagonal term order} 
on ${\mathbb C}[{\sf Mat}_{n\times n}]$,
i.e., any order that picks the diagonal term of a minor as the lead term. One has that
${X}_{2143}={X}_{2134}\cap {X}_{1243}$ (reduced intersection). Now
\[{I}_{2134}'=\langle z_{11}\rangle \mbox{\ and \ }
I_{1243}'=\langle z_{11}z_{22}z_{33}\rangle.\]
However, 
\[
{I}_{2143}'=\langle z_{11}, z_{12}z_{21}z_{33} \rangle\neq
{I}_{2134}'+ {I}_{1243}'=\langle z_{11}, z_{11}z_{22}z_{33}\rangle=I'_{2134}.\]
So $\{X_{2134}, X_{1243}\}$ is not a $\prec_{\tt diag}$-spectrum for $X_w$.\qed   
\end{example}

A permutation is {\bf vexillary} if it is $2143$-avoiding; see \cite[Section~2.2.1]{Manivel} for details. The following is not needed in the proof of Theorem~\ref{theorem:main}:

\begin{proposition}
$\{X_{u_1},\ldots, X_{u_k}\}$ is
a $\prec_{\tt diag}$-spectrum for $X_w$ if and only if $w$ is vexillary.
\end{proposition}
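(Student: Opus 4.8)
The plan is to reduce the statement to a concrete comparison of ideals, using the two directions of the biconditional separately. Recall that $\{X_{u_1},\ldots,X_{u_k}\}$ is a $\prec_{\tt diag}$-spectrum for $X_w$ precisely when all of \eqref{eqn:spectro1}, \eqref{eqn:spectro2}, \eqref{eqn:spectro3} hold with $\prec=\prec_{\tt diag}$. Equation \eqref{eqn:spectro1} is automatic: by Fulton's generation of $I_w$ from the essential minors and the construction of $u_e$, we always have $I_w=\sum_e I_{u_e}$, so $X_w=\bigcap_e X_{u_e}$ scheme-theoretically and in particular set-theoretically. The content is therefore in \eqref{eqn:spectro2} and \eqref{eqn:spectro3}: first, that each ${\tt init}_{\prec_{\tt diag}}X_{u_i}$ is reduced, and second, that ${\tt init}_{\prec_{\tt diag}}I_w = \sum_i {\tt init}_{\prec_{\tt diag}}I_{u_i}$. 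The key tool for the ``if'' direction is the diagonal Gröbner basis theorem for vexillary (more generally, one-sided ladder) matrix Schubert varieties — the essential minors form a Gröbner basis under a diagonal term order, and the resulting initial ideal is a squarefree monomial ideal (Knutson--Miller, and Kodiyalam--Raghavan / Gonciulea--Miller for ladder determinantal ideals). I would cite this to get \eqref{eqn:spectro2} for each $u_i$ (each $u_i$ is biGrassmannian, hence vexillary) and, when $w$ is vexillary, for $X_w$ itself.

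For the ``if'' direction, assume $w$ is vexillary. Each $u_i$ is biGrassmannian, so ${\tt init}_{\prec_{\tt diag}}I_{u_i}$ is generated by the diagonal terms of the minors attached to the single essential box $e_i$; this is reduced by the cited Gröbner basis theorem. For \eqref{eqn:spectro3}, the inclusion ${\tt init}_{\prec_{\tt diag}}I_w \supseteq \sum_i {\tt init}_{\prec_{\tt diag}}I_{u_i}$ is always trivial. For the reverse inclusion I would argue: since $w$ is vexillary, the essential minors of $I_w$ form a diagonal Gröbner basis, so ${\tt init}_{\prec_{\tt diag}}I_w$ is generated by the diagonal terms of the essential minors; but each essential minor of $w$ is, by definition of $u_e$, literally one of the minors generating $I_{u_e}$ (same northwest submatrix shape and same rank bound), so its diagonal term lies in ${\tt init}_{\prec_{\tt diag}}I_{u_e}$. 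Hence ${\tt init}_{\prec_{\tt diag}}I_w \subseteq \sum_i {\tt init}_{\prec_{\tt diag}}I_{u_i}$, giving equality. One then invokes Lemma~\ref{lemma:mainobs} or simply the multidegree additivity to close out, though for the Proposition the ideal identity is all that is required.

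For the ``only if'' direction, suppose $w$ is not vexillary, i.e., $w$ contains a $2143$ pattern. The goal is to produce a failure of \eqref{eqn:spectro3} (or of reducedness of ${\tt init}_{\prec_{\tt diag}}I_w$). The model to follow is the Digression example with $w=2143$: there $I_{2143}' = \langle z_{11}, z_{12}z_{21}z_{33}\rangle$ while $\sum_i I_{u_i}' = \langle z_{11}, z_{11}z_{22}z_{33}\rangle$, and the cubic $z_{12}z_{21}z_{33}$ — the diagonal term of a non-essential-looking $3\times 3$ minor that becomes relevant only after Gröbner degeneration — is not in the sum. The strategy is to localize: given a $2143$ pattern in $w$ at rows/columns $(i_1<i_2<i_3<i_4)$, I would identify the corresponding $3\times 3$ (or appropriately sized) minor whose diagonal initial term appears in ${\tt init}_{\prec_{\tt diag}}I_w$ but is not generated by the diagonal terms of the essential minors of the $u_e$; equivalently, exhibit a plus diagram in ${\tt MinPlus}(X_w')$ that is not an overlay of minimal plus diagrams of the $X_{u_i}'$, or show ${\tt init}_{\prec_{\tt diag}}I_w$ is strictly larger than $\sum_i {\tt init}_{\prec_{\tt diag}}I_{u_i}$ by a degree or Hilbert-series count. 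I expect the main obstacle to be exactly this direction: making the local $2143$ obstruction global and term-order-robust — i.e., proving that for \emph{every} non-vexillary $w$ the essential minors fail to be a diagonal Gröbner basis in a way that breaks \eqref{eqn:spectro3}, rather than just checking the minimal example. A clean way around it may be to compare Hilbert series: $\sum_i {\tt init}_{\prec_{\tt diag}}I_{u_i}$ always defines a scheme of the correct dimension but, when $w$ is non-vexillary, of the \emph{wrong} degree (it fails to equal $\mathfrak{S}_w(1,\ldots,1)$ read off correctly), since diagonal degeneration of a non-vexillary matrix Schubert variety is known to be non-reduced / of larger degree than the antidiagonal one; this degree discrepancy forces the strict containment and hence the failure of the spectrum property.
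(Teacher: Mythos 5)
Your ``if'' direction is essentially the paper's own argument: for vexillary $w$ (and for each $u_i$, which is biGrassmannian hence vexillary) the essential minors form a $\prec_{\tt diag}$-Gr\"{o}bner basis \cite[Section~1.4]{KMY}, the essential minors of $I_w$ are exactly the concatenation of those of the $I_{u_i}$ (since $r_{u_e}(e)=r_w(e)$), and squarefreeness of the diagonal lead terms gives reducedness. That half is complete and matches the paper.

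The genuine gap is in the ``only if'' direction, and you flag it yourself. Your reduction is the right one: the spectrum hypothesis forces ${\tt init}_{\prec_{\tt diag}}I_w=\sum_i{\tt init}_{\prec_{\tt diag}}I_{u_i}$, i.e., the essential minors of $I_w$ would form a diagonal Gr\"{o}bner basis. But you must then show this fails for \emph{every} non-vexillary $w$, and you only list candidate strategies without carrying one out. The paper closes exactly this step by citing \cite[Theorem~6.1]{KMY}, which says the essential minors are a diagonal Gr\"{o}bner basis only when $w$ is vexillary; with that citation your reduction finishes the proof immediately. Your proposed fallback via Hilbert series is not sound as stated: Gr\"{o}bner degeneration preserves the Hilbert series, so ${\tt init}_{\prec_{\tt diag}}X_w$ has the same degree and multidegree as $X_w$ --- it is never ``of larger degree than the antidiagonal one.'' What can be too small is the ideal $\sum_i{\tt init}_{\prec_{\tt diag}}I_{u_i}$ generated by the diagonal terms of the essential minors, and even its dimension can be wrong: in the paper's own digression example, for $w=2143$ this sum is $\langle z_{11}\rangle$, of codimension $1$, while $\ell(2143)=2$, contradicting your claim that it ``always defines a scheme of the correct dimension.'' So without \cite[Theorem~6.1]{KMY}, or a worked-out pattern-embedding argument showing that a $2143$ pattern always produces a diagonal lead monomial of ${\tt init}_{\prec_{\tt diag}}I_w$ lying outside the sum, the converse remains unproven.
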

\begin{proof}
Assume $w$ is vexillary. Then by \cite[Section~1.4]{KMY}, the essential
minors define a $\prec_{\tt diag}$-Gr\"{o}bner basis for $I_w$. 
The same is true of $I_{u_i}$ since $u_i$ is biGrassmannian and therefore
also vexillary. Since the (Gr\"{o}bner) essential minors of $I_w$ are the concatentation
of the (Gr\"{o}bner) essential minors of the $I_{u_i}$'s, the spectrum claim
follows.

For the converse, assume $w$ is not vexillary, but $\{X_{u_1},\ldots, X_{u_k}\}$
is a $\prec_{\tt diag}$-spectrum for $X_w$. Again, we know the essential minors
of $I_{u_i}$ form a $\prec_{\tt diag}$-Gr\"{o}bner basis. By the spectrum assumption, the concatenation of these $k$-many Gr\"{o}bner basis is a $\prec_{\tt diag}$-Gr\"{o}bner basis for $I_w$. However this concatenated Gr\"{o}bner basis is
the set of essential generators for $I_w$. This directly contradicts \cite[Theorem~6.1]{KMY}.
\end{proof}

\subsection{Multi-plus diagrams}
The technical core of our proof is to analyze the combinatorics of overlays of plus diagrams for the biGrassmannian $\prec_{\tt anti}$-spectrum $\{X_{u_1},\ldots,X_{u_k}\}$. Let 
\[{\tt Multi}(w)=\prod_{i=1}^k{\tt MinPlus}(X'_{u_i})\] 
be the set of  {\bf multi-plus diagrams} for $w$: we represent 
$(\mathcal P_1,\ldots,\mathcal P_k)\in {\tt Multi}(w)$ as a placement of 
colored $+$'s in a single $n\times n$ grid, where 
$(a,b)$ has a + of color $u_i$ if $(a,b)\in {\mathcal P}_i$.  

By Lemma~\ref{lemma:mainobs}(I), there is a map \[{\tt supp}:{\tt Multi}(w)\rightarrow {\tt Plus}(X'_w)\] given by $(\mathcal P_1,\ldots, \mathcal P_k)\mapsto \mathcal P_1\cup \ldots \cup \mathcal P_k$.  Call $\mathcal P_1\cup \ldots \cup \mathcal P_k$ the {\bf support} of $(\mathcal P_1,\ldots,\mathcal P_k)$. Central to our study is
\[{\tt Multi}(\mathcal P):={\tt supp}^{-1}(\mathcal P).\]  

\begin{example}
Let $w=42513$. Then ${\tt biGrass}(w)=\{{\color{blue}41235},{\color{Plum} 23415},{\color{Rhodamine}14523}\}$. Now,
\[\mathcal P=\left[\begin{array}{ccccc}
+&+&+&\cdot&\cdot\\
+&\cdot&+&\cdot&\cdot\\
+&\cdot&\cdot&\cdot&\cdot\\
\cdot&\cdot&\cdot&\cdot&\cdot\\
\cdot&\cdot&\cdot&\cdot&\cdot\\
\end{array}\right]\in {\tt MinPlus}(X_w').\]
One can check that
\[{\tt Multi}({\mathcal P})=\left\{\left[\begin{array}{ccccc}
{\color{Plum}+}{\color{blue}+}&{\color{blue}+}{\color{Rhodamine}+}&{\color{blue}+}{\color{Rhodamine}+}&\cdot&\cdot\\
{\color{Plum}+}&\cdot&{\color{Rhodamine}+}&\cdot&\cdot\\
{\color{Plum}+}{\color{Rhodamine}+}&\cdot&\cdot&\cdot&\cdot\\
\cdot&\cdot&\cdot&\cdot&\cdot\\
\cdot&\cdot&\cdot&\cdot&\cdot\\
\end{array}\right], \qquad \left[\begin{array}{ccccc}
{\color{Plum}+}{\color{blue}+}&{\color{blue}+}&{\color{blue}+}{\color{Rhodamine}+}&\cdot&\cdot\\
{\color{Plum}+}{\color{Rhodamine}+}&\cdot&{\color{Rhodamine}+}&\cdot&\cdot\\
{\color{Plum}+}{\color{Rhodamine}+}&\cdot&\cdot&\cdot&\cdot\\
\cdot&\cdot&\cdot&\cdot&\cdot\\
\cdot&\cdot&\cdot&\cdot&\cdot\\
\end{array}\right]\right \}.\]
\qed
\end{example}

\subsection{Local moves on plus diagrams}

A {\bf southwest move} is the following local 
operation on a plus diagram:
\begin{equation}
\label{local1}
\left[\begin{array}{cc}\cdot &+\\\cdot &\cdot \end{array}\right]\mapsto\left[\begin{array}{cc}\cdot &\cdot\\ +& \cdot \end{array}\right].
\end{equation}
The inverse operation is a {\bf northeast move}:
\begin{equation}
\label{local2}
\left[\begin{array}{cc}\cdot &\cdot\\+ &\cdot \end{array}\right]\mapsto\left[\begin{array}{cc}\cdot &+\\ \cdot& \cdot \end{array}\right].
\end{equation}

Suppose ${\mathcal Ess}(u)=\{(i,j)\}$. Define $\mathcal D_{\tt bot}(u)\in {\tt MinPlus}(X_u')$ as the $(i-r_u(i,j))\times (j-r_u(i,j))$ rectangle of $+$'s, with southwest corner in row $i$ and column $1$.
The following is well-known, and is a consequence (by specialization)
of the chute and ladder moves of \cite[Theorem~3.7]{Bergeron.Billey}:
\begin{lemma}
\label{lemma:localmoves}
Let $u\in S_n$ be biGrassmannian.
\begin{itemize}
\item[(I)] ${\tt MinPlus}(X_u')$ is connected and closed under the moves (\ref{local1}) and (\ref{local2}).
\item[(II)] Each ${\mathcal P}\in {\tt MinPlus}(X_u')$ can be
obtained from $\mathcal D_{\tt bot}(u)$ using only
the moves (\ref{local2}).
\end{itemize}
\end{lemma}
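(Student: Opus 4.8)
The statement to prove is Lemma~\ref{lemma:localmoves}: for a biGrassmannian $u$ with $\mathcal Ess(u)=\{(i,j)\}$, the set ${\tt MinPlus}(X_u')$ is connected and closed under the southwest/northeast moves (\ref{local1})--(\ref{local2}), and every element is reachable from the bottom-justified rectangle $\mathcal D_{\tt bot}(u)$ using only northeast moves. The plan is to identify ${\tt MinPlus}(X_u')$ concretely. Since $u$ is biGrassmannian with a single essential box, $X_u$ is (up to a cone over trivial coordinates) a one-sided ladder determinantal variety, and its antidiagonal Gröbner limit $X_u'$ is a squarefree monomial ideal whose facets are well understood. Concretely, by \cite[Theorem~B]{Knutson.Miller:annals} the elements of ${\tt MinPlus}(X_u')$ are exactly the $RC$-graphs (reduced pipe dreams) of $u$. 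So the first step is: translate the claim into the language of reduced pipe dreams for the biGrassmannian permutation $u$, where $\mathcal D_{\tt bot}(u)$ corresponds to the ``bottom'' pipe dream (the one whose crossings sit in the $p\times q$ rectangle with $p=i-r_u(i,j)$, $q=j-r_u(i,j)$, flush against row $i$ and column $1$).

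Next I would invoke the chute and ladder move theorem of Bergeron--Billey \cite[Theorem~3.7]{Bergeron.Billey}: the set of reduced pipe dreams of any permutation is connected under chutes and ladders, and for the Grassmannian case these moves specialize exactly to the single-box southwest/northeast moves (\ref{local1})--(\ref{local2}). The point is that for a biGrassmannian $u$ the essential condition forces all crossings to live in the single rectangle $R$ determined by $(i,j)$, with no interaction between distinct chutable/ladderable regions, so a general chute of length $\geq 2$ decomposes into a sequence of the length-one moves depicted in (\ref{local1}). This gives part (I) directly: connectedness is the Bergeron--Billey connectedness, and closure under the moves is the statement that applying (\ref{local1}) or (\ref{local2}) to an element of ${\tt MinPlus}(X_u')$ keeps one inside the set — which holds because the move preserves both the underlying permutation (it is a chute/ladder, hence a reduced-word-preserving operation) and the rectangle constraint.

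For part (II) I would argue that $\mathcal D_{\tt bot}(u)$ is the unique minimal element of ${\tt MinPlus}(X_u')$ under the partial order generated by northeast moves: every other pipe dream has at least one crossing strictly northeast of where it would sit in $\mathcal D_{\tt bot}(u)$, and repeatedly applying southwest moves (always possible until the configuration is bottom- and left-justified within $R$) terminates at $\mathcal D_{\tt bot}(u)$ — a standard monovariant / exchange argument on the sum of row-indices of the $+$'s, which strictly decreases under each southwest move and is bounded below. Reversing, every element is obtained from $\mathcal D_{\tt bot}(u)$ by northeast moves. The main obstacle is making the reduction from general chutes/ladders to the length-one moves (\ref{local1})--(\ref{local2}) airtight in the biGrassmannian setting, and in particular verifying that no intermediate configuration produced by a southwest move leaves ${\tt MinPlus}$ (i.e., stays a \emph{minimal} plus diagram rather than merely a plus diagram); this requires knowing that the rectangle $R$ is ``rigid'' — its crossings can move around inside a fixed region but their number is pinned by $\ell(u)=pq$ — which again follows from the $RC$-graph description, so the crux is really assembling these known facts rather than proving something genuinely new. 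I expect to cite \cite{Bergeron.Billey} and \cite{Knutson.Miller:annals} for the substance and supply only the specialization argument.
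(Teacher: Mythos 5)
Your high-level plan --- identify ${\tt MinPlus}(X_u')$ with the RC-graphs of $u$ via \cite[Theorem~B]{Knutson.Miller:annals} and then specialize the chute/ladder theorem \cite[Theorem~3.7]{Bergeron.Billey} --- is exactly the route the paper takes (the paper offers only this citation and no further detail), and your closure argument for part (I) (a length-one chute/ladder preserves the set of RC-graphs) is fine. But the two steps you add to make the ``specialization'' precise do not hold as stated. First, it is false that all $+$'s of an element of ${\tt MinPlus}(X_u')$ lie in the rectangle $R$: for $u=1423$ the rectangle is $\{(2,1),(2,2)\}$, yet $\{(1,2),(1,3)\}$ is a minimal plus diagram (see (\ref{eqn:firstfacets}) in the paper); the $+$'s only stay on the antidiagonals through $R$, moving northeast out of it. Second, a chute of length $\geq 2$ does \emph{not} ``decompose into a sequence of the length-one moves'': in a chutable rectangle with a doubled column the chuted $+$ cannot even take its first northeast step (the destination cell is occupied), so no such decomposition of that move exists; at best one could hope the two endpoints are joined by length-one moves along some other path, but that is precisely the connectivity you are trying to prove, so the argument as written is circular. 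What ``specialization'' actually requires is the statement that for biGrassmannian $u$ the doubled configurations needed for a chute or ladder of length $\geq 2$ never occur in a minimal plus diagram, so every applicable chute/ladder is already one of the moves (\ref{local1})--(\ref{local2}); this needs a short positional argument about $+$'s on adjacent antidiagonals which your sketch does not supply.

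The same missing ingredient undermines your part (II). Your termination argument rests on the assertion that a southwest move (\ref{local1}) is ``always possible until the configuration is bottom- and left-justified within $R$,'' i.e.\ that the only move-stuck element is $\mathcal D_{\tt bot}(u)$ --- but that is essentially the entire content of (II), and you assert it rather than prove it (also, the monovariant is stated backwards: the sum of row indices \emph{increases} under a southwest move and is bounded above by the staircase/flag constraint, which is what forces termination). A clean repair is to deduce (II) directly from \cite[Theorem~3.7]{Bergeron.Billey}, which says every RC-graph of $u$ is reached from the bottom one by ladder moves, once one has proved the missing fact above that for biGrassmannian $u$ only length-one ladder moves ever apply; connectedness in (I) then follows from (II) together with closure, rather than from a decomposition of long chutes.
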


Define a partial order on ${\tt MinPlus}(X_u')$ by taking
the transitive closure of the covering relation $\mathcal P<\mathcal P'$ if $\mathcal P'$ is obtained from $\mathcal P$ by a northeast local move (\ref{local2}). Let $<'$ be the
partial order on ${\tt Multi}(w)$ defined as the 
${\mathcal Ess}(w)$-factor Cartesian product of $<$. That is
$({\mathcal P}_1,\ldots,{\mathcal P}_k)<'({\mathcal Q}_1,\ldots,{\mathcal Q}_k)$
if and only if ${\mathcal P}_i<{\mathcal Q}_i$ for each $i$.
Then $<'$ induces a partial order on
${\tt Multi}(\mathcal P)\subseteq {\tt Multi}(w)$.

Given $(\mathcal P_1,\ldots,\mathcal P_m)\in {\tt Multi}(w)$, a {\bf long move} is a repeated application of (\ref{local1})
(respectively, (\ref{local2})) to a single $+$ appearing in one of the ${\mathcal P}_i$'s. 
\excise{
a composition of moves (3.3) or (3.4), moving a + in $\mathcal P_i$ within its diagonal to give $\mathcal P_i'$ so that ${\tt supp}(\mathcal P_1,\ldots,\mathcal P_m)={\tt supp}(\mathcal P_1,\ldots\mathcal P_i',\ldots,\mathcal P_m)$.}
Recall that a {\bf lattice} is a partially ordered set in which every two elements $x$ and $y$ have a least upper bound (join) 
$x\vee y$ (join)
and a unique greatest lower bound $x\wedge y$ (meet). 	It is basic that a Cartesian product of lattices is a lattice.

\begin{theorem}
\label{theorem:overlay}
Let $w\in S_n$ and $\mathcal P\in {\tt MinPlus}(X'_w)$.  
\begin{itemize}
\item[(I)]  ${\tt Multi}(\mathcal P)$ is connected by long moves.
\item[(II)] Each $({\tt MinPlus}(X_{u_i}'),<)$ is a lattice. Consequently, $({\tt Multi}(w),<')$
is a lattice.
\item[(III)] $({\tt Multi}(\mathcal P),<')$ is a sublattice of $({\tt Multi}(w),<')$. 
\end{itemize}
\end{theorem}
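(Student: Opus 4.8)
The plan is to establish the three parts roughly in the order (II), (III), (I), since the lattice structure is the scaffolding on which the connectivity statement rests, and (III) is what forces ${\tt Multi}(\mathcal P)$ to be well-behaved as a poset.

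For (II), the key observation is that each $u_i$ is biGrassmannian, hence by Lemma~\ref{lemma:localmoves}(II) every ${\mathcal P}\in {\tt MinPlus}(X_{u_i}')$ is obtained from the rectangle ${\mathcal D}_{\tt bot}(u_i)$ by a sequence of northeast moves. I would make this explicit by coordinatizing: since the $+$'s of color $u_i$ form an $a\times b$ ``staircase-deformable'' configuration confined (by (S1)) to the rectangle $R_{u_i}$ and obeying the semistandardness constraints (S2)--(S4), one can record such a plus diagram by, say, the antidiagonal index of the $+$ in each column (or equivalently by a lattice-path/Gelfand--Tsetlin–type array), and check that the partial order $<$ corresponds to the componentwise order on this array. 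A componentwise order on a product of chains (intervals of $\ZZ$) is manifestly a lattice, with meet and join taken coordinatewise; one must verify that the coordinatewise meet/join of two valid arrays is again valid, which is where the inequalities defining ${\tt MinPlus}(X_{u_i}')$ must be shown to be closed under $\min$ and $\max$ — these are ``interval'' inequalities of the form $\ell \le p_j \le r$ and $p_j \le p_{j+1}+c$, all of which survive coordinatewise $\min$/$\max$. Then $({\tt Multi}(w),<')$ is a lattice as a Cartesian product of lattices, as already noted in the text.

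For (III), I need ${\tt Multi}(\mathcal P)$ to be closed under the meet and join of ${\tt Multi}(w)$. So suppose $({\mathcal P}_1,\dots,{\mathcal P}_k)$ and $({\mathcal Q}_1,\dots,{\mathcal Q}_k)$ both lie in ${\tt supp}^{-1}(\mathcal P)$; I must show ${\mathcal P}_i \vee {\mathcal Q}_i$ and ${\mathcal P}_i \wedge {\mathcal Q}_i$ (taken in ${\tt MinPlus}(X_{u_i}')$) overlay, across $i$, to exactly $\mathcal P$ again. The containment ${\tt supp}(\text{meet}) \subseteq {\tt supp}(\text{either original}) = \mathcal P$ is easy from monotonicity of ${\tt supp}$ under $<'$ (a northeast move only moves a $+$, so the support of a smaller multi-plus diagram is ``northeast-below'' that of a larger one) — but actually the cleanest route is: under the coordinatization above, a $+$ of color $u_i$ sits in antidiagonal $d$ in column $j$; its contribution to the support in that antidiagonal is governed by which antidiagonal it occupies. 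Since $\mathcal P \in {\tt MinPlus}(X_w')$ is a \emph{facet}, $|{\tt supp}(\cdot)|$ (number of $+$'s, equivalently $\ell(w)$ on the nose) is extremal, and the overlay condition ${\tt supp} = \mathcal P$ becomes equivalent to a system of equalities ``in each antidiagonal, the set of occupied positions over all colors equals a fixed set,'' which one checks is preserved under coordinatewise $\min$ and $\max$ of the defining arrays. The main obstacle lives here: showing that the equality ${\tt supp}(\vec{\mathcal P}) = \mathcal P$ — as opposed to mere containment — is a sublattice condition, i.e. that combining two overlays that both produce $\mathcal P$ via meet or join again produces $\mathcal P$ and not something strictly smaller. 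I expect this to follow from the rigidity of facets (minimality of $\mathcal P$ forces the overlay to be exactly $\mathcal P$ once we know it lies in ${\tt Plus}(X_w')$ and is $\le \mathcal P$ in the appropriate sense), combined with Lemma~\ref{lemma:mainobs}, but pinning down the exact bookkeeping of which antidiagonal each $+$ occupies is the delicate step.

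For (I), connectivity of ${\tt Multi}(\mathcal P)$ by long moves then comes nearly for free from (III): in a finite lattice any two elements are connected through their meet (or join), and a long move is precisely a single coordinate $<$-step, i.e. a cover or anti-cover in one factor of ${\tt MinPlus}(X_{u_i}')$. Since ${\tt Multi}(\mathcal P)$ is a sublattice, given $\vec{\mathcal P}, \vec{\mathcal Q} \in {\tt Multi}(\mathcal P)$, their meet $\vec{\mathcal P}\wedge\vec{\mathcal Q}$ is also in ${\tt Multi}(\mathcal P)$, and descending from $\vec{\mathcal P}$ to $\vec{\mathcal P}\wedge\vec{\mathcal Q}$ one coordinate-step (= long move) at a time keeps us inside ${\tt Multi}(\mathcal P)$ — provided each intermediate element still has support $\mathcal P$, which again is the rigidity point from (III) applied along the saturated chain. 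So the one substantive thing to check is that every element on a saturated $<'$-chain between two members of the sublattice ${\tt Multi}(\mathcal P)$ is itself in ${\tt Multi}(\mathcal P)$; this is automatic once ${\tt Multi}(\mathcal P)$ is known to be an \emph{interval} (or at least order-convex) sublattice, which I would verify directly — if $\vec{\mathcal P} <' \vec{\mathcal R} <' \vec{\mathcal Q}$ with the two outer ones in ${\tt supp}^{-1}(\mathcal P)$, then ${\tt supp}(\vec{\mathcal R})$ is squeezed between ${\tt supp}(\vec{\mathcal P}) = \mathcal P$ and ${\tt supp}(\vec{\mathcal Q}) = \mathcal P$ in a manner forcing equality. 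That squeeze is the crux, and it is essentially the same fact I flagged as the obstacle in (III).
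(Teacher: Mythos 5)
Your overall architecture (coordinatize each ${\tt MinPlus}(X_{u_i}')$ by the rows of its $+$'s, get a lattice with componentwise meet and join, then use facet-minimality of $\mathcal P$ together with Lemma~\ref{lemma:mainobs} to upgrade a containment ${\tt supp}(\cdot)\subseteq\mathcal P$ to equality) runs parallel to the paper's, but the two steps you yourself flag as delicate are genuinely missing, and one is argued from a false premise. The containment you need in (III) cannot come from ``monotonicity of ${\tt supp}$ under $<'$'': ${\tt supp}$ is not monotone in any containment sense, since a northeast move replaces one position of the support by a different one. What actually makes (III) work is the sharper fact that $\mathcal P_i\wedge\mathcal P_i'\subseteq\mathcal P_i\cup\mathcal P_i'$ (the paper's Lemma~\ref{lemma:meetporism}). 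In your coordinatization this would indeed be quick \emph{once} you prove that the order $<$ (defined as the transitive closure of unit northeast moves) coincides with the componentwise order on row-arrays and that the componentwise $\max/\min$ of two valid arrays is valid and is the meet/join; the reachability half of that identification (componentwise comparability implies a chain of unit moves through valid diagrams) is precisely the nontrivial content of Proposition~\ref{prop:moveorder}(I), and you leave it unproved.

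More seriously, your proof of (I) rests on ${\tt Multi}(\mathcal P)$ being order-convex, justified by ``squeezing'' ${\tt supp}(\vec{\mathcal R})$ between two copies of $\mathcal P$. That squeeze has no force, again because ${\tt supp}$ is not order-monotone: on a saturated $<'$-chain from $\vec{\mathcal P}$ down to $\vec{\mathcal P}\wedge\vec{\mathcal Q}$, a given $+$ passes through rows strictly between its two endpoint rows, and those intermediate positions of its antidiagonal need not lie in $\mathcal P$ at all, so intermediate elements of the chain can leave ${\tt supp}^{-1}(\mathcal P)$. This is exactly why the paper does not connect through arbitrary chains: it uses the specific sequence of Proposition~\ref{prop:moveorder}, ordered ${\tt SAME}$, ${\tt SW}$, ${\tt NE}$, in which each step is a \emph{long move} sending a $+$ directly from its old position to its new one, so every intermediate diagram has each $+$ at either its $\mathcal P_i$- or its $\mathcal P_i'$-position (Lemma~\ref{lemma:meetporism}), hence support inside $\mathcal P_i\cup\mathcal P_i'\subseteq\mathcal P$, with minimality forcing equality; and the performability of each long move (no obstructing $+$ on $D$, $D_{\rm left}$, $D_{\rm right}$) is itself something that must be proved (Claim~\ref{claim:soso234}) and is nowhere addressed in your proposal. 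Until you either prove order-convexity of ${\tt Multi}(\mathcal P)$ (which your argument does not do, and which is not needed in the paper's route) or replace arbitrary saturated chains by such controlled long-move sequences, part (I) remains unproved.
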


\begin{example}
\label{exa:5361724}  
Let $w=5361724$. Fix \[\mathcal P=
\left[\begin{array}{ccccccc}
+		&+			&+&+&\cdot	&\cdot	&\cdot	\\
+		& + 			&\cdot		&+	&\cdot	&\cdot	&\cdot	\\
+		& + 			&\cdot		&+	&\cdot	&\cdot	&\cdot	\\
\cdot		&+ 			&\cdot		&\cdot		&\cdot	&\cdot	&\cdot	\\
\cdot		&\cdot		&\cdot		&\cdot		&\cdot	&\cdot	&\cdot	\\
\cdot		&\cdot		&\cdot		&\cdot		&\cdot	&\cdot	&\cdot	\\
\cdot		&\cdot 		&\cdot		&\cdot		&\cdot	&\cdot	&\cdot	\\
\end{array}\right]\in {\tt MinPlus}(X_w').\]
Here ${\tt biGrass}(w)=\{{\color{Rhodamine}5123467}, {\color{red}3451267}, {\color{blue}1562347},
      {\color{ForestGreen}1345627}, {\color{orange}1256734}\}$.  Figure~\ref{fig:poset} shows the Hasse diagram for ${\tt Multi}(\mathcal P)$. The poset is a lattice, agreeing with Theorem~\ref{theorem:overlay}(III).\qed 
\end{example}

\begin{figure}[t]
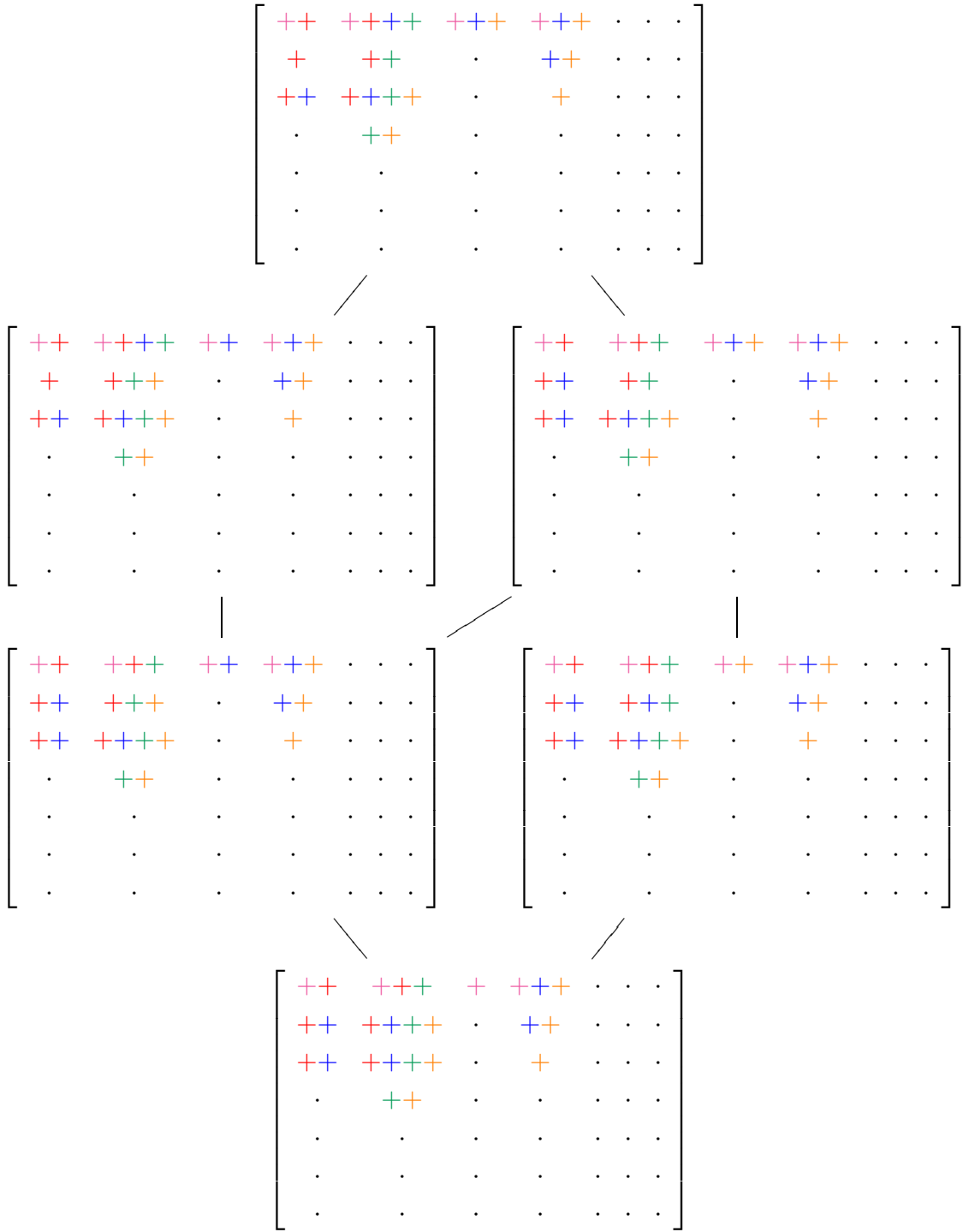

\begin{center}
$\xygraph{
!{<0cm,0cm>;<1cm,0cm>:<0cm,1cm>::}
!{(4,15) }*+{\left[\begin{array}{ccccccc}
\plusr \plusg 	&\plusr \plusg \plusb\plusy	&\plusr \plusb \pluso&\plusr \plusb \pluso&\cdot	&\cdot	&\cdot	\\
\plusg		&\plusg \plusy 			&\cdot		&\plusb \pluso	&\cdot	&\cdot	&\cdot	\\
\plusg \plusb	&\plusg \plusb \plusy \pluso	&\cdot		&\pluso		&\cdot	&\cdot	&\cdot	\\
\cdot		&\plusy \pluso			&\cdot		&\cdot		&\cdot	&\cdot	&\cdot	\\
\cdot		&\cdot			&\cdot		&\cdot		&\cdot	&\cdot	&\cdot	\\
\cdot		&\cdot			&\cdot		&\cdot		&\cdot	&\cdot	&\cdot	\\
\cdot		&\cdot 			&\cdot		&\cdot		&\cdot	&\cdot	&\cdot	\\
\end{array}\right]}="a"
!{(0,10) }*+{\left[\begin{array}{ccccccc}
\plusr \plusg 	&\plusr \plusg \plusb\plusy	&\plusr \plusb &\plusr \plusb \pluso&\cdot	&\cdot	&\cdot	\\
\plusg		&\plusg \plusy \pluso		&\cdot		&\plusb \pluso	&\cdot	&\cdot	&\cdot	\\
\plusg \plusb	&\plusg \plusb \plusy \pluso	&\cdot		&\pluso		&\cdot	&\cdot	&\cdot	\\
\cdot		&\plusy \pluso			&\cdot		&\cdot		&\cdot	&\cdot	&\cdot	\\
\cdot		&\cdot			&\cdot		&\cdot		&\cdot	&\cdot	&\cdot	\\
\cdot		&\cdot			&\cdot		&\cdot		&\cdot	&\cdot	&\cdot	\\
\cdot		&\cdot 			&\cdot		&\cdot		&\cdot	&\cdot	&\cdot	\\
\end{array}\right]}="b"
!{(8,10) }*+{\left[\begin{array}{ccccccc}
\plusr \plusg 	&\plusr \plusg \plusy		&\plusr \plusb \pluso&\plusr \plusb \pluso&\cdot	&\cdot	&\cdot	\\
\plusg	\plusb	&\plusg \plusy 			&\cdot		&\plusb \pluso	&\cdot	&\cdot	&\cdot	\\
\plusg \plusb	&\plusg \plusb \plusy \pluso	&\cdot		&\pluso		&\cdot	&\cdot	&\cdot	\\
\cdot		&\plusy \pluso			&\cdot		&\cdot		&\cdot	&\cdot	&\cdot	\\
\cdot		&\cdot			&\cdot		&\cdot		&\cdot	&\cdot	&\cdot	\\
\cdot		&\cdot			&\cdot		&\cdot		&\cdot	&\cdot	&\cdot	\\
\cdot		&\cdot 			&\cdot		&\cdot		&\cdot	&\cdot	&\cdot	\\
\end{array}\right]}="c"
!{(0,5) }*+{\left[\begin{array}{ccccccc}
\plusr \plusg 	&\plusr \plusg \plusy		&\plusr \plusb &\plusr \plusb \pluso&\cdot	&\cdot	&\cdot	\\
\plusg	\plusb	&\plusg \plusy \pluso		&\cdot		&\plusb \pluso	&\cdot	&\cdot	&\cdot	\\
\plusg \plusb	&\plusg \plusb \plusy \pluso	&\cdot		&\pluso		&\cdot	&\cdot	&\cdot	\\
\cdot		&\plusy \pluso			&\cdot		&\cdot		&\cdot	&\cdot	&\cdot	\\
\cdot		&\cdot			&\cdot		&\cdot		&\cdot	&\cdot	&\cdot	\\
\cdot		&\cdot			&\cdot		&\cdot		&\cdot	&\cdot	&\cdot	\\
\cdot		&\cdot 			&\cdot		&\cdot		&\cdot	&\cdot	&\cdot	\\
\end{array}\right]}="d"
!{(8,5) }*+{\left[\begin{array}{ccccccc}
\plusr \plusg 	&\plusr \plusg \plusy		&\plusr  \pluso&\plusr \plusb \pluso&\cdot	&\cdot	&\cdot	\\
\plusg	\plusb	&\plusg \plusb \plusy 		&\cdot		&\plusb \pluso	&\cdot	&\cdot	&\cdot	\\
\plusg \plusb	&\plusg \plusb \plusy \pluso	&\cdot		&\pluso		&\cdot	&\cdot	&\cdot	\\
\cdot		&\plusy \pluso		&\cdot		&\cdot		&\cdot	&\cdot	&\cdot	\\
\cdot		&\cdot			&\cdot		&\cdot		&\cdot	&\cdot	&\cdot	\\
\cdot		&\cdot			&\cdot		&\cdot		&\cdot	&\cdot	&\cdot	\\
\cdot		&\cdot 			&\cdot		&\cdot		&\cdot	&\cdot	&\cdot	\\
\end{array}\right]}="e"
!{(4,0) }*+{\left[\begin{array}{ccccccc}
\plusr \plusg 	&\plusr \plusg \plusy		&\plusr  		&\plusr \plusb \pluso&\cdot	&\cdot	&\cdot	\\
\plusg	\plusb	&\plusg \plusb \plusy \pluso	&\cdot		&\plusb \pluso	&\cdot	&\cdot	&\cdot	\\
\plusg \plusb	&\plusg \plusb \plusy \pluso	&\cdot		&\pluso		&\cdot	&\cdot	&\cdot	\\
\cdot		&\plusy \pluso		&\cdot		&\cdot		&\cdot	&\cdot	&\cdot	\\
\cdot		&\cdot			&\cdot		&\cdot		&\cdot	&\cdot	&\cdot	\\
\cdot		&\cdot			&\cdot		&\cdot		&\cdot	&\cdot	&\cdot	\\
\cdot		&\cdot 			&\cdot		&\cdot		&\cdot	&\cdot	&\cdot	\\
\end{array}\right]}="f"
"a"-"b"  "a"-"c" "b"-"d" "c"-"e" "d"-"f" "e"-"f" "d"-"c"}
$
\end{center}
\caption{The subposet of ${\tt Multi}(5361724)$ with support $\mathcal P$ (cf.~Example~\ref{exa:5361724}).}
\label{fig:poset}
\end{figure}

We use the following ordering of the $+$'s of $\mathcal D_{\tt bot}(u)$.
\begin{equation}
\label{eqn:ordering}
\left[\begin{array}{cccc}
+_7&\rdots&&\\
+_4&+_8&\rdots&\\
+_2&+_5&+_9&\rdots\\
+_1&+_3&+_6&+_{10}\\
\end{array}\right].
\end{equation}
In words, order the $+$'s along diagonals, from northwest to southeast,
where $+_1$ is at the southwest corner of ${\mathcal D}_{\tt bot}$.

In view of Lemma~\ref{lemma:localmoves}(II), there
is a bijection between the $+$'s of
$\mathcal D_{\tt bot}(u)$ and any ${\mathcal P}\in
{\tt MinPlus}(X_u')$. Hence the ordering (\ref{eqn:ordering})
induces an ordering $+_1,+_2,\ldots$ of the $+$'s of ${\mathcal P}$.

The following two lemmas hold by Lemma~\ref{lemma:localmoves} and an easy
induction.

\begin{lemma}
\label{lemma:weaklysouthwest}
If in ${\mathcal D}_{\tt bot}(u)\in {\tt MinPlus}(X_{u}')$ the label $+_a$ is weakly southwest of $+_b$, then the same is true for all
${\mathcal P}\in {\tt MinPlus}(X_u')$.
\end{lemma}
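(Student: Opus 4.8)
The plan is to combine Lemma~\ref{lemma:localmoves}(II) with an induction on the number of northeast moves, carrying along a slight strengthening of the statement.

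By Lemma~\ref{lemma:localmoves}(II) fix a chain $\mathcal{D}_{\tt bot}(u)=\mathcal{Q}_0\to\mathcal{Q}_1\to\cdots\to\mathcal{Q}_m=\mathcal{P}$ of single northeast moves (\ref{local2}); by Lemma~\ref{lemma:localmoves}(I) each $\mathcal{Q}_t\in{\tt MinPlus}(X_u')$. A move (\ref{local2}) transports one $+$ from a cell $(i+1,j)$ to the cell $(i,j+1)$ and relocates no other $+$, so if $+_c$ occupies cell $(r_c,c_c)$ in $\mathcal{D}_{\tt bot}(u)$, then it occupies cell $(r_c-k_c(t),\,c_c+k_c(t))$ in $\mathcal{Q}_t$, where $k_c(t)\ge 0$ is the number of northeast moves applied to $+_c$ during the first $t$ steps; thus $k_c(0)=0$ and $k_c(t)$ is weakly increasing in $t$. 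I will prove, by induction on $t$: \emph{if $+_a\ne +_b$ and $+_a$ is weakly southwest of $+_b$ in $\mathcal{D}_{\tt bot}(u)$, then in $\mathcal{Q}_t$ we have (a) $+_a$ weakly southwest of $+_b$, and (b) $k_a(t)\le k_b(t)$.} Here (b) implies (a): setting $\rho:=r_a-r_b\ge 0$ and $\gamma:=c_b-c_a\ge 0$, the row and column inequalities defining ``$+_a$ weakly southwest of $+_b$ in $\mathcal{Q}_t$'' are exactly $k_a(t)-k_b(t)\le\rho$ and $k_a(t)-k_b(t)\le\gamma$, both automatic once $k_a(t)\le k_b(t)$. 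So it suffices to prove (b).

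The base case $t=0$ is trivial. For the step, say $\mathcal{Q}_t\to\mathcal{Q}_{t+1}$ applies a northeast move to $+_e$. If $e\notin\{a,b\}$, or $e=b$, then $k_a$ is unchanged while $k_b$ does not decrease, so (b) persists. The essential case is $e=a$: here it is enough to see that $k_a(t)<k_b(t)$ holds before the move, since then $k_a(t{+}1)=k_a(t)+1\le k_b(t)=k_b(t{+}1)$. Suppose instead $k_a(t)=k_b(t)=:k$. Because $\mathcal{D}_{\tt bot}(u)$ is a solid rectangle of $+$'s and $+_a$ is weakly southwest of $+_b$ in it, the subrectangle with southwest corner $+_a$ and northeast corner $+_b$ is entirely filled; take the lattice path of $+$'s inside it that first runs up the column of $+_a$ and then runs right, say $+_a=+_{y_0},+_{y_1},\dots,+_{y_N}=+_b$. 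Each $+_{y_i}$ is weakly southwest of $+_{y_{i+1}}$ in $\mathcal{D}_{\tt bot}(u)$, so the inductive hypothesis~(b) gives $k=k_a(t)=k_{y_0}(t)\le k_{y_1}(t)\le\cdots\le k_{y_N}(t)=k_b(t)=k$, forcing $k_{y_i}(t)=k$ for all $i$. Now $+_{y_1}$ lies immediately north of $+_a$ in $\mathcal{D}_{\tt bot}(u)$ when $r_a>r_b$, and immediately east of $+_a$ when $r_a=r_b$ (in which case $c_a<c_b$); since $k_{y_1}(t)=k=k_a(t)$, the location formula shows that in $\mathcal{Q}_t$ the label $+_{y_1}$ then occupies the cell immediately north, respectively east, of the current cell of $+_a$. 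But both of those cells must be empty for a northeast move (\ref{local2}) to act on $+_a$ — so no such move exists, a contradiction. This completes the induction, and the case $t=m$ yields the lemma.

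The crux is the exclusion in the last paragraph of a northeast move on $+_a$ once $k_a=k_b$: this is precisely the ``no overtaking'' feature of the chain of $+$'s joining $+_a$ to $+_b$, and it is the one place where the solid‑rectangle shape of $\mathcal{D}_{\tt bot}(u)$ (rather than an arbitrary plus diagram) is used; everything else is bookkeeping on the quantities $k_c(t)$.
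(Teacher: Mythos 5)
Your proof is correct and follows the same route the paper intends: the paper dispatches this lemma with the remark that it ``holds by Lemma~\ref{lemma:localmoves} and an easy induction,'' and your argument is exactly that induction, carried out along a chain of northeast moves from ${\mathcal D}_{\tt bot}(u)$ with the strengthened invariant $k_a(t)\le k_b(t)$ and the no-overtaking step supplying the details the paper omits. No gaps; the blocking argument correctly uses that the cells immediately north and east of a $+$ must be vacant for the move (\ref{local2}), and the solid-rectangle shape of ${\mathcal D}_{\tt bot}(u)$ to build the connecting chain of labels.
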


For an antidiagonal $D$, 
let $D_{\rm left}$ be the antidiagonal adjacent to $D$ and to its left. Similarly let $D_{\rm right}$ be the antidiagonal adjacent to $D$ and to its right. 
\begin{lemma}
\label{lemma:relativeorder}
Fix $\mathcal P\in {\tt MinPlus}(X_u')$. Fix an antidiagonal $D$. Let $+_a\in D$ and suppose $+_b$ is in $D$, $D_{\rm left}$ or $D_{\rm right}$ so that $+_b$ is weakly southwest of $+_a$.  Then for any $\mathcal P'\in {\tt MinPlus}(X_u')$, $+_b$ is weakly southwest of $+_a$.
\end{lemma}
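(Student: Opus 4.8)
The plan is to prove the statement by induction along a path of local moves. By Lemma~\ref{lemma:localmoves}(I), ${\tt MinPlus}(X_u')$ is connected under the moves (\ref{local1}) and (\ref{local2}), so first I would fix a sequence $\mathcal P=\mathcal P^{(0)},\mathcal P^{(1)},\dots,\mathcal P^{(m)}=\mathcal P'$ in which each $\mathcal P^{(\ell+1)}$ is obtained from $\mathcal P^{(\ell)}$ by a single southwest or northeast move. The crucial preliminary remark is that such a move sends a $+$ from a position $(r,c)$ to $(r\mp 1,c\pm 1)$, hence preserves $r+c$: throughout the sequence $+_a$ stays on the antidiagonal $D$, and $+_b$ stays on whichever antidiagonal it began on, which by hypothesis is $D$, $D_{\rm left}$, or $D_{\rm right}$. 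Equivalently, writing $+_a=(r_a,c_a)$ and $+_b=(r_b,c_b)$ in any $\mathcal P^{(\ell)}$, the value $\delta:=(r_b+c_b)-(r_a+c_a)$ is always one of $-1,0,1$. Thus it suffices to prove the one-step claim: if $+_b$ is weakly southwest of $+_a$ in $\mathcal P^{(\ell)}$ (that is, $r_b\geq r_a$ and $c_b\leq c_a$), then the same holds in $\mathcal P^{(\ell+1)}$; the lemma then follows by induction on $\ell$, the base case being the hypothesis on $\mathcal P$. We may assume $a\neq b$, the other case being vacuous; then a single move acts on exactly one of $+_a$, $+_b$, or neither, and I would split into these cases.

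The easy cases use nothing about antidiagonals. If the move acts on neither $+_a$ nor $+_b$, both positions are unchanged. A northeast move on $+_a$ relocates it to $(r_a-1,c_a+1)$, and $r_b\geq r_a>r_a-1$ together with $c_b\leq c_a<c_a+1$ keep $+_b$ weakly southwest of it. Dually, a southwest move on $+_b$ relocates it to $(r_b+1,c_b-1)$, which is still weakly southwest of $+_a$.

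The substance of the lemma is in the two remaining cases, and I expect this to be the only genuine obstacle: these are exactly the moves that tend to make $+_a$ and $+_b$ ``cross,'' and the antidiagonal-adjacency hypothesis must rule them out whenever crossing is imminent. For a southwest move on $+_a$, sending it to $(r_a+1,c_a-1)$, one must check $r_b\geq r_a+1$ and $c_b\leq c_a-1$. If $\delta=0$ then $r_b-r_a=c_a-c_b\geq 0$, and this common value is nonzero since $+_a\neq +_b$, giving both inequalities. If $\delta=-1$ then $r_b\geq r_a$ already forces $c_b=(r_a+c_a-1)-r_b\leq c_a-1$, and $r_b=r_a$ cannot occur because it would place $+_b$ at the cell $(r_a,c_a-1)$, which a southwest move on $+_a$ requires (together with $(r_a+1,c_a-1)$ and $(r_a+1,c_a)$) to be empty. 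The case $\delta=1$ is the mirror image: $c_b\leq c_a$ forces $r_b\geq r_a+1$, and $c_b=c_a$ would place $+_b$ at $(r_a+1,c_a)$, again a cell the move needs empty. Finally, a northeast move on $+_b$, sending it to $(r_b-1,c_b+1)$, is handled by the identical analysis with the roles reversed: now it is $+_a$ that occupies the cell such a move would need vacated when $\delta=\pm1$ with equality, and one gets $r_b-1\geq r_a$ and $c_b+1\leq c_a$. This exhausts the cases and completes the induction.

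An alternative organization avoids half of the case analysis: by Lemma~\ref{lemma:localmoves}(II) a sequence of southwest moves carries $\mathcal P$ back to $\mathcal D_{\tt bot}(u)$, and among such moves the only nontrivial case above is ``southwest move on $+_a$,'' so ``$+_b$ weakly southwest of $+_a$'' persists all the way to $\mathcal D_{\tt bot}(u)$; Lemma~\ref{lemma:weaklysouthwest} (with the two labels interchanged) then propagates it to every $\mathcal P'$. Either way, the one place the hypothesis ``$+_b\in D\cup D_{\rm left}\cup D_{\rm right}$'' is used is to force that, whenever $+_b$ sits immediately west, south, or northeast of $+_a$, a move on $+_a$ or $+_b$ toward the other is simply blocked.
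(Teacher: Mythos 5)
Your proof is correct, and it is exactly the argument the paper intends: the paper disposes of this lemma with the one-line remark that it ``holds by Lemma~\ref{lemma:localmoves} and an easy induction,'' and your induction along a connecting sequence of local moves—using that each move preserves the antidiagonal of every $+$ and that the only potentially order-breaking moves are blocked by the occupied adjacent cell—is precisely that easy induction, spelled out. The alternative organization via $\mathcal D_{\tt bot}(u)$ and Lemma~\ref{lemma:weaklysouthwest} is a fine streamlining of the same idea.
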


\begin{proposition}

\label{prop:moveorder}

\begin{itemize}
\item[(I)] Let $u$ be a biGrassmannian permutation and $\mathcal P, \mathcal P'\in {\tt MinPlus}(X'_u)$. 
Consider the following lists of indices:
\[{\tt SAME}=(a: +_a \text{\ appears in the same location in ${\mathcal P}$ and ${\mathcal P}'$}),\]
\[{\tt SW}=(a: +_a \text{\ in ${\mathcal P}'$ is strictly southwest of $+_a$ in ${\mathcal P}$}),\]
and
\[{\tt NE}=(a: +_a \text{\ in ${\mathcal P}'$ is strictly northeast of $+_a$ in ${\mathcal P}$}).\]
Let $\Lambda$ be the sequence contained by the concatenation
${\tt SAME}, {\tt SW}, {\tt NE}$ where {\tt SAME} and {\tt SW} are listed 
in increasing order whereas the
elements of {\tt NE} are listed in decreasing order.

Then there exists a sequence:
\begin{equation}
\label{eqn:whatever123}
{\mathcal P}:={\mathcal P}_{1}\mapsto {\mathcal P}_2\mapsto {\mathcal P}_3 \mapsto \cdots
\mapsto {\mathcal P}_h\mapsto {\mathcal P}_{h+1}\mapsto\cdots
\mapsto {\mathcal P}_{\ell}\mapsto
{\mathcal P}_{\ell+1}:={\mathcal P}'
\end{equation}
where 
\begin{itemize}
\item[(i)] $\ell=\ell(u)=|\lambda(u)|$
\item[(ii)] each ${\mathcal P}_i\in {\tt MinPlus}(X_u')$;
\item[(iii)] ${\mathcal P}_h\mapsto {\mathcal P}_{h+1}$ is an application of a long move to $+_{\Lambda_h}$.
\end{itemize}
\item[(II)] $({\tt MinPlus}(X'_u),<)$ is a lattice.
\end{itemize}
\end{proposition}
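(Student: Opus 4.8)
The plan is to prove (I) by a carefully staged induction that produces the claimed sequence, and then deduce (II) as a formal consequence of the existence of a well-behaved ``canonical'' path between any two elements.

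For part (I), the first step is to reduce bookkeeping by recording each ${\mathcal P}\in {\tt MinPlus}(X_u')$ as a function $p:\{1,\ldots,\ell\}\to n\times n$ sending an index $a$ to the position of $+_a$, using the canonical labelling from~(\ref{eqn:ordering}) transported via Lemma~\ref{lemma:localmoves}(II). In this language Lemma~\ref{lemma:weaklysouthwest} and Lemma~\ref{lemma:relativeorder} say that certain weak-southwest relations among the indexed pluses are \emph{invariant} across all of ${\tt MinPlus}(X_u')$; these are the only structural facts I will need. The heart of the argument is to check that the three blocks can be processed in the stated order without any two pluses ever colliding or violating minimality:

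\begin{itemize}
\item[(a)] Leaving the {\tt SAME}-indexed pluses untouched is automatic; I list them first only to fix the indexing of the remaining moves.
\item[(b)] For the {\tt SW} block, processed in \emph{increasing} index order, I argue that when it is $+_{\Lambda_h}$'s turn, all pluses that would obstruct sliding it southwest to its $\mathcal P'$-target have either already been moved (smaller {\tt SW} indices, which by the invariant lie weakly southwest of $+_{\Lambda_h}$ and were pushed further southwest) or are {\tt SAME}/{\tt NE} pluses that by Lemma~\ref{lemma:relativeorder} cannot lie in the relevant cells of the intervening antidiagonals. Hence the long southwest move of~(\ref{local1}) is legal and lands $+_{\Lambda_h}$ exactly where $\mathcal P'$ has it, keeping the intermediate diagram in ${\tt MinPlus}(X_u')$ (using Lemma~\ref{lemma:localmoves}(I) at each elementary step).
\item[(c)] For the {\tt NE} block, processed in \emph{decreasing} index order, the symmetric argument applies: a plus moving northeast is blocked only by pluses with larger {\tt NE} index (already moved northeast, hence out of the way by the invariant) or by {\tt SAME} pluses, which again the invariant places outside the swept cells.
\end{itemize}

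Summing the block sizes gives exactly $|{\tt SW}|+|{\tt NE}|$ elementary moves, and since $|{\tt SAME}|+|{\tt SW}|+|{\tt NE}|=\ell(u)=|\lambda(u)|$ this yields clause~(i) once one also checks the degenerate count; clauses (ii) and (iii) are built into the construction. The step I expect to be the genuine obstacle is~(b)/(c): verifying that no ``crossing'' occurs during a long move, i.e.\ that the invariant southwest/northeast relations of Lemmas~\ref{lemma:weaklysouthwest} and~\ref{lemma:relativeorder} really do certify that every cell swept by a long move is empty at the moment it is swept. This requires a short case analysis on which antidiagonal a potential obstructing plus sits in (the same one, $D_{\rm left}$, or $D_{\rm right}$ relative to the moving plus), which is precisely what Lemma~\ref{lemma:relativeorder} was set up to control.

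For part (II), I would show $({\tt MinPlus}(X_u'),<)$ is a lattice by exhibiting meets and joins explicitly and using (I) to verify they are well defined. Concretely, given $\mathcal P,\mathcal Q$, define $\mathcal P\wedge\mathcal Q$ to be the diagram whose $+_a$ sits at the weakly-southwest-most position among the $\mathcal P$- and $\mathcal Q$-positions of $+_a$, for each $a$ (and dually $\mathcal P\vee\mathcal Q$ takes the weakly-northeast-most). The content is that these position-wise extrema actually lie in ${\tt MinPlus}(X_u')$: using the canonical path of~(I) from $\mathcal D_{\tt bot}(u)$ to $\mathcal P$ and to $\mathcal Q$ and the invariance of the southwest order (Lemma~\ref{lemma:weaklysouthwest}), one checks that the candidate extremum is obtainable from $\mathcal D_{\tt bot}(u)$ by northeast moves, hence is a valid minimal plus diagram by Lemma~\ref{lemma:localmoves}(II). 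Once membership is established, the universal properties of meet and join are immediate from the index-wise definition of $<$ (which is the restriction of the product order on positions under the weak-southwest relation). This also makes transparent the subsequent assertions in Theorem~\ref{theorem:overlay}(II) that $({\tt Multi}(w),<')$, being a Cartesian product of these lattices, is a lattice.
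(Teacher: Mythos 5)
Your part (I) is essentially the paper's argument: the same canonical processing order ({\tt SAME}, then {\tt SW} in increasing index order, then {\tt NE} in decreasing order), with the non-obstruction of each long move certified by Lemma~\ref{lemma:relativeorder}; the case analysis you defer (same antidiagonal, $D_{\rm left}$, $D_{\rm right}$) is exactly the paper's Claim~\ref{claim:soso234}. One caveat: your justification that already-processed {\tt SW} pluses cannot obstruct --- that smaller {\tt SW} indices ``by the invariant lie weakly southwest of $+_{\Lambda_h}$'' --- is not literally true, since pluses on different diagonals of $\mathcal D_{\tt bot}(u)$ need not be southwest-comparable at all; the paper's claim instead argues by contradiction, comparing the positions of $+_b$ and $+_{\Lambda_h}$ in $\mathcal P'$ with those in $\mathcal P_h$ and invoking Lemma~\ref{lemma:relativeorder}, and you would need that sharper argument to close steps (b)/(c).

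For part (II) you take a genuinely different route. The paper produces the meet as the intermediate diagram $\mathcal R=\mathcal P_{\#{\tt SAME}+\#{\tt SW}+1}$ on the canonical path of (I), and then proves it is the \emph{greatest} lower bound by the set-theoretic comparison of the index sets $A,A',B,B',C,C'$ in Claim~\ref{claim:diagpf}. You instead define the meet positionwise (each $+_a$ placed at the more-southwest of its two positions, which makes sense since each $+_a$ lives on a fixed antidiagonal) and obtain the universal property by identifying $<$ with the positionwise product order. The two candidates coincide: your positionwise minimum is exactly the paper's $\mathcal R$, and your route arguably yields a cleaner uniqueness argument than Claim~\ref{claim:diagpf}. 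Two points, however, must be made explicit. First, $<$ is defined as the transitive closure of single northeast local moves, not index-wise; the nontrivial direction of your identification (positionwise comparability implies comparability in $<$) is precisely what (I) supplies in the case ${\tt SW}=\emptyset$, so it should be cited rather than called immediate. Second, membership of the positionwise extremum in ${\tt MinPlus}(X_u')$ is most easily obtained by observing that it is an intermediate diagram of the sequence (\ref{eqn:whatever123}), hence lies in ${\tt MinPlus}(X_u')$ by (I)(ii); your proposed re-derivation via northeast moves from $\mathcal D_{\tt bot}(u)$ is vaguer and redundant. With those repairs your plan goes through.
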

\excise{
\begin{example}
Let $u=14523$.  The ordering (\ref{eqn:ordering}) is as follows:
\[\mathcal D_{\tt bot}(u)\!=\!\left [ \begin{array}{ccccc}
\cdot & \cdot & \cdot & \cdot &\cdot\\ 
+_2 & +_4 & \cdot & \cdot &\cdot\\ 
+_1 & +_3 & \cdot & \cdot &\cdot\\ 
\cdot & \cdot & \cdot & \cdot &\cdot\\ 
\cdot & \cdot & \cdot & \cdot &\cdot\\ 
\end{array}\right]. 
\text{  Let 
$
\mathcal P\!=\!\left [ \begin{array}{ccccc}
\cdot & \cdot & +_4 & \cdot &\cdot\\ 
+_2 & \cdot & +_3 & \cdot &\cdot\\ 
+_1 & \cdot & \cdot & \cdot &\cdot\\ 
\cdot & \cdot & \cdot & \cdot &\cdot\\ 
\cdot & \cdot & \cdot & \cdot &\cdot\\ 
\end{array}\right]
$
 and \ 
$\mathcal P'\!=\!\left [ \begin{array}{ccccc}
\cdot & +_2 & +_4 & \cdot &\cdot\\ 
\cdot & \cdot & \cdot & \cdot &\cdot\\ 
+_1 & +_3 & \cdot & \cdot &\cdot\\ 
\cdot & \cdot & \cdot & \cdot &\cdot\\ 
\cdot & \cdot & \cdot & \cdot &\cdot\\ 
\end{array}\right]
$.}\]
 Then ${\tt SAME}=(1,4), {\tt SW}=(3), \text{ and } {\tt NE}=(2)$.  
Removing trivial long moves for ${\tt SAME}$, the sequence (\ref{eqn:whatever123}) consists of:
 \[{\mathcal P}_3=
\left [ \begin{array}{ccccc}
\cdot & \cdot & +_4 & \cdot &\cdot\\ 
+_2 & \cdot & +_3 & \cdot &\cdot\\ 
+_1 & \cdot & \cdot & \cdot &\cdot\\ 
\cdot & \cdot & \cdot & \cdot &\cdot\\ 
\cdot & \cdot & \cdot & \cdot &\cdot\\ 
\end{array}\right]
\mapsto
{\mathcal P}_4=
\left [ \begin{array}{ccccc}
\cdot & \cdot & +_4 & \cdot &\cdot\\ 
+_2 & \cdot & \cdot & \cdot &\cdot\\ 
+_1 & +_3 & \cdot & \cdot &\cdot\\ 
\cdot & \cdot & \cdot & \cdot &\cdot\\ 
\cdot & \cdot & \cdot & \cdot &\cdot\\ 
\end{array}\right]
\mapsto
{\mathcal P}_5=
\left [ \begin{array}{ccccc}
\cdot & +_2 & +_4 & \cdot &\cdot\\ 
\cdot & \cdot & \cdot& \cdot &\cdot\\ 
+_1 & +_3 & \cdot & \cdot &\cdot\\ 
\cdot & \cdot & \cdot & \cdot &\cdot\\ 
\cdot & \cdot & \cdot & \cdot &\cdot\\ 
\end{array}\right].
\]
In particular, ${\mathcal P}_4={\mathcal P}_{\#{\tt SAME}+\#{\tt SW}+1}$
is $\mathcal P\wedge\mathcal P'$.\qed
\end{example}
}

\begin{example}
Let $u=1267345$.  The ordering (\ref{eqn:ordering}) is as follows (where for brevity we ignore the unused bottom three rows of the $7\times 7$ ambient square):
\[\mathcal D_{\tt bot}(u)\!=\!\left [ \begin{array}{ccccccc}
\cdot &\cdot& \cdot& \cdot &\cdot& \cdot & \cdot\\ 
\cdot & \cdot & \cdot & \cdot &\cdot& \cdot & \cdot\\ 
+_2& +_4& +_6 & \cdot &\cdot& \cdot & \cdot\\ 
+_1 & +_3 & +_5 & \cdot &\cdot& \cdot & \cdot\\ 
\end{array}\right].\]
\[\text{  Let 
$
\mathcal P\!=\!\left [ \begin{array}{ccccccc}
\cdot &\cdot& \cdot& \cdot &\cdot& \cdot & \cdot\\ 
\cdot & +_2 & +_4 & +_6 &\cdot& \cdot & \cdot\\ 
\cdot& \cdot& \cdot & +_5 &\cdot& \cdot & \cdot\\ 
+_1 & +_3 & \cdot & \cdot &\cdot& \cdot & \cdot\\ 
\end{array}\right]
$
 and \ 
$\mathcal P'\!=\!\left [ \begin{array}{ccccccc}
\cdot &\cdot& \cdot& \cdot &+_6& \cdot & \cdot\\ 
\cdot & \cdot & +_4 & \cdot &+_5& \cdot & \cdot\\ 
+_2& \cdot& +_3 & \cdot &\cdot& \cdot & \cdot\\ 
+_1 & \cdot & \cdot & \cdot &\cdot& \cdot & \cdot\\ 
\end{array}\right]
$.}\]
Here ${\tt SAME}=(1,4), {\tt SW}=(2), \text{ and } {\tt NE}=(6,5,3)$.  
Therefore $\Lambda=(1,4,2,6,5,3)$.

Removing trivial long moves for ${\tt SAME}$, the sequence (\ref{eqn:whatever123}) consists of the following moves (where we have underlined $+_{\Lambda_h}$
for emphasis):
 \[\mathcal P_3\!=\!\left [ \begin{array}{ccccccc}
\cdot &\cdot& \cdot& \cdot &\cdot& \cdot & \cdot\\ 
\cdot & \underline{+_2} & +_4 & +_6 &\cdot& \cdot & \cdot\\ 
\cdot& \cdot& \cdot & +_5 &\cdot& \cdot & \cdot\\ 
+_1 & +_3 & \cdot & \cdot &\cdot& \cdot & \cdot\\ 
\end{array}\right]
\mapsto
{\mathcal P}_4=\!\left [ \begin{array}{ccccccc}
\cdot &\cdot& \cdot& \cdot &\cdot& \cdot & \cdot\\ 
\cdot & \cdot & +_4 & \underline{+_6} &\cdot& \cdot & \cdot\\ 
+_2& \cdot& \cdot & +_5 &\cdot& \cdot & \cdot\\ 
+_1 & +_3 & \cdot & \cdot &\cdot& \cdot & \cdot\\ 
\end{array}\right]\]
\[
\mapsto
{\mathcal P}_5=\!\left [ \begin{array}{ccccccc}
\cdot &\cdot& \cdot& \cdot &+_6& \cdot & \cdot\\ 
\cdot & \cdot & +_4 & \cdot &\cdot& \cdot & \cdot\\ 
+_2& \cdot& \cdot & \underline{+_5} &\cdot& \cdot & \cdot\\ 
+_1 & +_3 & \cdot & \cdot &\cdot& \cdot & \cdot\\ 
\end{array}\right]
\mapsto 
\mathcal P_6=\!\left [ \begin{array}{ccccccc}
\cdot &\cdot& \cdot& \cdot &+_6& \cdot & \cdot\\ 
\cdot & \cdot & +_4 & \cdot &+_5& \cdot & \cdot\\ 
+_2& \cdot& \cdot & \cdot &\cdot& \cdot & \cdot\\ 
+_1 & \underline{+_3} & \cdot & \cdot &\cdot& \cdot & \cdot\\ 
\end{array}\right]\]
\[
\mapsto 
\mathcal P_7=\!\left [ \begin{array}{ccccccc}
\cdot &\cdot& \cdot& \cdot &+_6& \cdot & \cdot\\ 
\cdot & \cdot & +_4 & \cdot &+_5& \cdot & \cdot\\ 
+_2& \cdot& +_3  & \cdot &\cdot& \cdot & \cdot\\ 
+_1 & \cdot& \cdot & \cdot &\cdot& \cdot & \cdot\\ 
\end{array}\right]
\]
In particular, ${\mathcal P}_4={\mathcal P}_{\#{\tt SAME}+\#{\tt SW}+1}$
is $\mathcal P\wedge\mathcal P'$.\qed
\end{example}

\noindent
\emph{Proof of Proposition~\ref{prop:moveorder}:}
(I): We show that for each $1\leq h\leq \ell$ 
one can give the desired long move. Clearly, we can use the trivial long move for $1\leq h \leq \#{\tt SAME}$. 

For $\#{\tt SAME}< h\leq \#{\tt SAME}+\#{\tt SW}$ we have ${\Lambda_h}\in {\tt SW}$. 
Suppose the $+_{\Lambda_h}$ in ${\mathcal P}'$ is in row $r'$ and suppose the $+_{\Lambda_h}$ in ${\mathcal P}$ is in row $r$ (where we have
assumed $r'>r$ in matrix notation). Let $D$ be the antidiagonal that $+_{\Lambda_h}$ sits in (in either ${\mathcal P}$ and ${\mathcal P}'$).

\begin{claim}
\label{claim:soso234}
In ${\mathcal P}_{h}$, there is no $+_b$ in rows
\begin{enumerate}
\item $r,r+1,\ldots, r'-1$ of $D_{\rm left}$; 
\item $r+1,r+2,\ldots, r'$ of $D_{\rm right}$; or
\item $r+1,r+2,\ldots, r'$ of $D$.
\end{enumerate}
\end{claim}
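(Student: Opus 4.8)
The plan is to prove Claim~\ref{claim:soso234} by tracking, via Lemmas~\ref{lemma:weaklysouthwest} and~\ref{lemma:relativeorder}, the relative positions of the labels $+_b$ with respect to $+_{\Lambda_h}$ as we pass from the base diagram $\mathcal D_{\tt bot}(u)$ to the intermediate diagram $\mathcal P_h$. The key point is that $\mathcal P_h$ is obtained from $\mathcal P = \mathcal P_1$ by long moves applied only to labels $+_{\Lambda_1},\dots,+_{\Lambda_{h-1}}$, all of which are in $\tt SAME$ or $\tt SW$; so in $\mathcal P_h$ the labels indexed by $\tt SAME$ are still in their $\mathcal P$-positions, the labels indexed by $\tt SW$ with index $<\Lambda_h$ have been moved to their $\mathcal P'$-positions (strictly southwest of where they were in $\mathcal P$), and every other label still sits where it was in $\mathcal P$. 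In particular $+_{\Lambda_h}$ itself is still in row $r$ of the antidiagonal $D$ in $\mathcal P_h$ (it gets moved at step $h$).

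First I would handle part (3): a $+_b$ in rows $r+1,\dots,r'$ of $D$ would be strictly southwest of $+_{\Lambda_h}$ within the same antidiagonal $D$ in $\mathcal P_h$. I want to contradict this using $\mathcal P'$, where $+_{\Lambda_h}$ has moved to row $r' \ge r+1$. By Lemma~\ref{lemma:relativeorder} (applied with $D$ itself), being weakly southwest of $+_{\Lambda_h}$ on $D$, $D_{\rm left}$, or $D_{\rm right}$ is preserved across all of ${\tt MinPlus}(X_u')$; so in every diagram $+_b$ is weakly southwest of $+_{\Lambda_h}$. But in $\mathcal P'$, $+_{\Lambda_h}$ is in row $r'$ of $D$, and $+_b$ must then be in a row $\ge r'$ of $D$ — and since at most one $+$ can be strictly southwest of any given position on a single antidiagonal while all labels remain distinct in $\mathcal P'$, one gets a contradiction by counting the labels that must be packed below $+_{\Lambda_h}$ on $D$ (there are only $n-r'$ rows available and too many labels forced there). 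Parts (1) and (2) are analogous: a $+_b$ in the stated rows of $D_{\rm left}$ or $D_{\rm right}$ would be weakly southwest of the position $+_{\Lambda_h}$ occupies in $\mathcal P'$ but strictly northeast of where it would have to be to be consistent with $\mathcal P$; again invoking Lemma~\ref{lemma:relativeorder} the southwest relation of $+_b$ to $+_{\Lambda_h}$ is an invariant of ${\tt MinPlus}(X_u')$, and comparing its forced value in $\mathcal P$ with its forced value in $\mathcal P'$ produces the contradiction. The range of rows in (1) being $r,\dots,r'-1$ rather than $r+1,\dots,r'$ reflects the one-column shift between $D$ and $D_{\rm left}$, which I would make precise by noting that a $+$ in row $s$ of $D_{\rm left}$ is weakly southwest of a $+$ in row $s$ of $D$.

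The main obstacle, I expect, is bookkeeping the ``nothing in between'' aspect: it is not enough that $+_b$ cannot be \emph{strictly} southwest of the final position of $+_{\Lambda_h}$; I need that in $\mathcal P_h$ there is \emph{nothing at all} in the intermediate rows, so that the long move on $+_{\Lambda_h}$ at step $h$ is unobstructed and the support is preserved. This requires combining (i) Lemma~\ref{lemma:relativeorder} to rule out anything weakly southwest of $+_{\Lambda_h}$ in $D_{\rm left}$, $D$, $D_{\rm right}$, and (ii) the fact that in $\mathcal D_{\tt bot}(u)$ the labels in these three adjacent antidiagonals form consecutive runs, so that the relative order established in the base case, transported by Lemmas~\ref{lemma:weaklysouthwest} and~\ref{lemma:relativeorder}, pins down exactly which labels can appear in which rows of $\mathcal P_h$. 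I would organize the argument as an induction on $h$: assuming $\mathcal P_1,\dots,\mathcal P_h$ have been constructed with the claimed structure, verify the claim for $\mathcal P_h$, perform the long move to get $\mathcal P_{h+1}$, and check it still lies in ${\tt MinPlus}(X_u')$ (the move is legal and, since $+_{\Lambda_h}$ ends at its $\mathcal P'$-position, the partial diagram agrees with $\mathcal P'$ on $\{\Lambda_1,\dots,\Lambda_h\}$ and with $\mathcal P$ elsewhere). Once Claim~\ref{claim:soso234} is in hand, the $\tt NE$ case of part (I) follows by the symmetric argument run in decreasing order of index, part (i) ($\ell = \ell(u) = |\lambda(u)|$) is the count of $+$'s in $\mathcal D_{\tt bot}(u)$, and part (II) follows because a finite poset in which every pair of elements is connected by a confluent system of these local moves has meets and joins — indeed the explicit $\mathcal P_{\#{\tt SAME}+\#{\tt SW}+1}$ exhibited in the examples is $\mathcal P \wedge \mathcal P'$.
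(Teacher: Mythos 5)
There is a genuine gap, and it sits exactly where the paper's proof does its real work: the case analysis on the index $b$ relative to $\Lambda_h$. Your argument establishes (correctly, via Lemma~\ref{lemma:relativeorder}) that if $+_b$ occupied one of the forbidden positions in $\mathcal P_h$, then $+_b$ would be weakly southwest of $+_{\Lambda_h}$ in \emph{every} element of ${\tt MinPlus}(X_u')$, in particular in $\mathcal P'$ where $+_{\Lambda_h}$ sits in row $r'$ of $D$. But the contradiction you then draw does not go through. For part (3) you appeal to a counting statement --- ``at most one $+$ can be strictly southwest of any given position on a single antidiagonal'' and ``only $n-r'$ rows available'' --- which is false as stated and in any case only forces the single label $+_b$ into rows $\geq r'$ of $D$, which is not by itself contradictory (many pairs of $+$'s are permanently in this relation, e.g.\ by Lemma~\ref{lemma:weaklysouthwest}). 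For parts (1) and (2) there is also a directional slip: the rows $r,\ldots,r'-1$ of $D_{\rm left}$ are weakly \emph{northeast} of row $r'$ of $D$, not weakly southwest of it, so ``comparing forced values in $\mathcal P$ and $\mathcal P'$'' as you describe does not yet produce anything.

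What is missing is the split $\Lambda_h<b$ versus $\Lambda_h>b$. If $b>\Lambda_h$ (for instance $b\in{\tt SW}$ but not yet processed, so $+_b$ still sits at its $\mathcal P$-position and its $\mathcal P'$-position is strictly \emph{southwest} of that), your ``compare with $\mathcal P'$'' strategy gives no control at all; the paper excludes this case by a different mechanism, namely the ordering (\ref{eqn:ordering}) on $\mathcal D_{\tt bot}(u)$: for $b>\Lambda_h$ the label $+_b$ is never weakly southwest of $+_{\Lambda_h}$ on $D$, $D_{\rm left}$ or $D_{\rm right}$ in $\mathcal D_{\tt bot}(u)$, and Lemma~\ref{lemma:relativeorder} transports this to $\mathcal P_h$, ruling out the forbidden region outright. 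If $b<\Lambda_h$, then (as you correctly observe in your setup) $+_b$'s position in $\mathcal P'$ is weakly northeast of its position in $\mathcal P_h$; since every forbidden position is weakly northeast of row $r'$ of $D$, the invariance from Lemma~\ref{lemma:relativeorder} forces $+_b$ in $\mathcal P'$ to occupy the very cell of $+_{\Lambda_h}$ in $\mathcal P'$ --- that coincidence of cells, not a row count, is the contradiction. Your proposal names the right ingredients (which labels have moved by step $h$, Lemma~\ref{lemma:relativeorder}, comparison with $\mathcal P'$), but without the index dichotomy and with the faulty counting step the argument does not close.
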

\noindent
\emph{Proof of Claim~\ref{claim:soso234}:}
If $\Lambda_h=b$ then by definition the unique $+_{\Lambda_h}=+_b$ is in row
$r$ of $D$, and in particular not in (1), (2) or (3).
If $\Lambda_h<b$, by the definition of the ordering
on $+$'s combined with Lemma~\ref{lemma:relativeorder}, $+_b$ is not weakly southwest of $+_{\Lambda_h}$.  Thus we may assume $\Lambda_h>b$.  Since $\Lambda_h>b$ the position of $+_b$ in $\mathcal P'$ will either be the same (if $b\in {\tt SAME}$ or $b\in {\tt SW}$), or strictly northeast of its position in $\mathcal P_h$ (if $b\in {\tt NE}$).  The  position of $+_{\Lambda_h}$ in $\mathcal P'$ is row $r'$ of $D$, which is weakly southwest of the position of $+_b$ in $\mathcal P'$. However, by the assumption that $+_b$ is in (1), (2) or (3), we see that
in $\mathcal P_h$, $+_b$ is weakly southwest of $+_{\Lambda_h}$. Hence we
obtain a contradiction of Lemma~\ref{lemma:relativeorder}. \qed


In view of Claim~\ref{claim:soso234} we may apply the long move ${\mathcal P}_{h}\mapsto {\mathcal P}_{h+1}$ that moves
the $+_{\Lambda_{h}}$ in row $r$ of ${\mathcal P}_{h}$ to row $r'$, showing (iii). Since each long move is by definition a composition of southwest moves,
(ii) holds by Lemma~\ref{lemma:localmoves}(I).

Finally, for $\Lambda_h\in {\tt NE}$ we have a long move for the same reasons
(\emph{mutatis mutandis}) as in our analysis above of $\Lambda_h\in {\tt SW}$.
(Alternatively, let ${\mathcal W}'={\mathcal P}_{\#{\tt SAME}+\#{\tt SW}+1}$
and ${\mathcal W}={\mathcal P}'$. Then by the above arguments there are
southwest long moves connecting ${\mathcal W}$ to ${\mathcal W}'$. Then 
we can reverse these moves to give the desired northeast long moves from
${\mathcal W}'$ to ${\mathcal P}'$.)

Since the list {\tt SAME}, {\tt SW}, {\tt NE} is of length $\ell$, (i)
holds trivially.

(II): We will only construct ${\mathcal P}\wedge {\mathcal P}'\in {\tt MinPlus}(X_u')$ (the construction of 
$\mathcal P \vee \mathcal P'$ is entirely analogous). 
From (I) we have the sequence of long moves (\ref{eqn:whatever123}) that transform ${\mathcal P}$ into ${\mathcal P}'$.  Let 
$\mathcal R={\mathcal P}_{h}$, where $h$ corresponds to the first index in ${\tt NE}$, i.e.,
$h=\#{\tt SAME}+\#{\tt SW}+1$.

   $\mathcal R$ is obtained from $\mathcal P$ by applying a series of southwest long moves and $\mathcal R<\mathcal P$.  Likewise, $\mathcal P'$ is obtained from $\mathcal R$ entirely by northeast long moves, so $\mathcal R<\mathcal P'$.
Suppose we have some other $<$-lower bound $\mathcal S$ of $\mathcal P$ and $\mathcal P'$.  We may construct $\mathcal T$, 
a $<$-lower bound for $\mathcal R$ and $\mathcal S$, by the same argument above we have used to construct ${\mathcal R}$ from 
${\mathcal P}$ and ${\mathcal P}'$. 

That $\mathcal R=\mathcal P \wedge{\mathcal P}'$ is immediate from the following:

\begin{claim}
\label{claim:diagpf}
$\mathcal R\geq \mathcal T=\mathcal S$. 
\end{claim}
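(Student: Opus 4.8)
The plan is to prove Claim~\ref{claim:diagpf} by showing two things: first that $\mathcal{T} = \mathcal{S}$, and second that $\mathcal{R} \geq \mathcal{T}$. For the equality $\mathcal{T} = \mathcal{S}$, the point is that $\mathcal{T}$ was constructed from $\mathcal{R}$ and $\mathcal{S}$ by the \emph{same} recipe that produced $\mathcal{R}$ from $\mathcal{P}$ and $\mathcal{P}'$ — namely, run the sequence (\ref{eqn:whatever123}) of long moves from $\mathcal{R}$ to $\mathcal{S}$ and stop at the index just before the \texttt{NE} block begins. The key observation is that $\mathcal{S} \leq \mathcal{R}$ already (since $\mathcal{S}$ is a lower bound for $\mathcal{R}$, as it is a lower bound for both $\mathcal{P}$ and $\mathcal{P}'$, and $\mathcal{R} \leq \mathcal{P}$, $\mathcal{R} \leq \mathcal{P}'$ — wait, this needs care: $\mathcal{S}$ is assumed to be a lower bound of $\mathcal{P}$ and $\mathcal{P}'$, and we must check $\mathcal{S} \leq \mathcal{R}$, which is exactly what we want to prove). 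So instead the argument runs: when we form the lists \texttt{SAME}, \texttt{SW}, \texttt{NE} for the pair $(\mathcal{R}, \mathcal{S})$, I claim the list \texttt{NE} is empty. Indeed, each $+_a$ either sits in the same place in $\mathcal{R}$ and $\mathcal{S}$, or moves; and since $\mathcal{R}$ was obtained from $\mathcal{P}$ and $\mathcal{P}'$ by taking only the \texttt{SAME} and \texttt{SW} long moves, every $+_a$ in $\mathcal{R}$ is weakly southwest of its position in both $\mathcal{P}$ and $\mathcal{P}'$; combined with $\mathcal{S}$ being a common lower bound, one deduces via Lemma~\ref{lemma:weaklysouthwest} and Lemma~\ref{lemma:relativeorder} that no $+_a$ in $\mathcal{S}$ can be strictly northeast of its position in $\mathcal{R}$. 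Hence \texttt{NE}$=\emptyset$ for the pair $(\mathcal{R},\mathcal{S})$, so $\mathcal{T}$ — which is $\mathcal{R}$ followed by all the \texttt{SAME}/\texttt{SW} long moves transforming it to $\mathcal{S}$ — is simply $\mathcal{S}$ itself.

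Next, for $\mathcal{R} \geq \mathcal{T}$: since $\mathcal{T} = \mathcal{S}$ and $\mathcal{S}$ is obtained from $\mathcal{R}$ by southwest long moves only (the \texttt{SAME} and \texttt{SW} moves), each such move decreases in the $<$ order, so $\mathcal{S} \leq \mathcal{R}$, i.e. $\mathcal{R} \geq \mathcal{T}$. This establishes Claim~\ref{claim:diagpf}, and then $\mathcal{R}$ being both a lower bound of $\mathcal{P}$, $\mathcal{P}'$ and dominating every other lower bound $\mathcal{S}$ (via $\mathcal{S} = \mathcal{T} \leq \mathcal{R}$) gives $\mathcal{R} = \mathcal{P} \wedge \mathcal{P}'$, completing part (II) of the proposition.

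The main obstacle I anticipate is making the assertion ``\texttt{NE}$=\emptyset$ for the pair $(\mathcal{R},\mathcal{S})$'' fully rigorous. One must argue positionally, antidiagonal by antidiagonal: for a fixed $+_a$, knowing its position in $\mathcal{R}$ is weakly southwest of its position in $\mathcal{P}$ and in $\mathcal{P}'$, and knowing $\mathcal{S} \leq \mathcal{P}$ and $\mathcal{S} \leq \mathcal{P}'$, one wants to conclude $\mathcal{S}$'s copy of $+_a$ is weakly southwest of $\mathcal{R}$'s copy. The subtlety is that $<$ on ${\tt MinPlus}(X_u')$ is the transitive closure of the northeast-move cover relation, so ``$\mathcal{S} \leq \mathcal{P}$'' does \emph{not} immediately say each $+_a$ individually moves southwest from $\mathcal{P}$ to $\mathcal{S}$ — one needs to invoke Lemma~\ref{lemma:weaklysouthwest}: the weakly-southwest relation among the labeled $+$'s is an invariant of ${\tt MinPlus}(X_u')$, so the relative geometry is rigid, and the only freedom is how far along its own antidiagonal each $+_a$ slides. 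Once this ``rigidity'' is pinned down, the antidiagonal-position of $+_a$ in any $\mathcal{Q} \in {\tt MinPlus}(X_u')$ becomes a single coordinate, and $<$ restricted to the sublattice spanned by $\mathcal{P}, \mathcal{P}'$ is coordinatewise comparison of these antidiagonal-positions, after which $\mathcal{R} = \mathcal{P} \wedge \mathcal{P}'$ is the coordinatewise minimum and the claim is immediate. I would present this rigidity carefully as the heart of the argument.

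Finally I would remark that the construction of $\mathcal{P} \vee \mathcal{P}'$ is symmetric: run the sequence (\ref{eqn:whatever123}) backwards from $\mathcal{P}'$ to $\mathcal{P}$, equivalently take the coordinatewise maximum of antidiagonal-positions, and the dual of Claim~\ref{claim:diagpf} shows it is the least upper bound. This proves Proposition~\ref{prop:moveorder}(II), and then Theorem~\ref{theorem:overlay}(II) follows since a Cartesian product of lattices is a lattice, and (III) follows because ${\tt Multi}(\mathcal{P})$ is cut out inside ${\tt Multi}(w)$ by the support condition, which the meet and join (being coordinatewise min/max of antidiagonal-positions) manifestly preserve.
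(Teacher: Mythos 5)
Your argument is correct, and it reaches the key equality $\mathcal T=\mathcal S$ by a genuinely different mechanism than the paper, though the outer skeleton (construct $\mathcal T$ from $(\mathcal R,\mathcal S)$ by the same recipe, prove $\mathcal T=\mathcal S$, and use $\mathcal T\le\mathcal R$, which holds because $\mathcal T$ is reached from $\mathcal R$ by southwest long moves) is shared. The paper proves $\mathcal T=\mathcal S$ by bookkeeping: it introduces the six displacement-index sets $A,A',B,B',C,C'$, notes $A\cap A'=\emptyset$ and $C\cap C'=\emptyset$ from the constructions of $\mathcal R$ and $\mathcal T$, derives the identities (\ref{eqn:thefirst567}) and (\ref{eqn:thesecond567}) from the chains $\mathcal T<\mathcal R<\mathcal P$, $\mathcal T<\mathcal S<\mathcal P$ (and the primed versions), and then extracts $C'=\emptyset$ by intersecting with $C'$. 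You instead argue positionally: $\mathcal R$ places each $+_a$ at the weakly-southwest-more of its two positions in $\mathcal P$ and $\mathcal P'$; descending in $<$ can only slide each labeled $+$ weakly southwest along its own antidiagonal; hence any common lower bound $\mathcal S$ sits coordinatewise weakly southwest of $\mathcal R$, the {\tt NE} list for the pair $(\mathcal R,\mathcal S)$ is empty, and $\mathcal T=\mathcal S$ on the nose. This is legitimate and arguably more transparent: it exposes the monotonicity (``$\mathcal S\le\mathcal P$ implies every $+_a$ of $\mathcal S$ is weakly southwest of $+_a$ in $\mathcal P$'') that the paper also needs but uses silently when asserting (\ref{eqn:thefirst567}) for the \emph{arbitrary} lower bound $\mathcal S$, and it identifies $\mathcal R$ outright as the coordinatewise minimum. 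One caveat: Lemmas \ref{lemma:weaklysouthwest} and \ref{lemma:relativeorder} alone do not deliver this monotonicity, since they compare distinct labels within a single diagram; the correct justification is the rigidity you articulate afterwards --- the labeling of the $+$'s is well defined and each covering move of $<$ slides a single labeled $+$ one step northeast along its antidiagonal, so along any chain every label moves weakly northeast --- and only that direction (order implies coordinatewise comparison) is needed for the claim, since $\mathcal T\le\mathcal R$ comes from the construction rather than from the converse. Your concluding remarks on the join and on Theorem \ref{theorem:overlay}(II),(III) agree with the paper.
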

\noindent
\emph{Proof of Claim~\ref{claim:diagpf}:}
We will work with the sets: 
\[A=\{a: +_a \text{\ in ${\mathcal R}$ is strictly southwest of $+_a$ in $\mathcal P$}\}\]
\[A'=\{a: +_a \text{\ in ${\mathcal R}$ is strictly southwest of $+_a$ in $\mathcal P'$}\}\]
\[B=\{a: +_a \text{\ in ${\mathcal S}$ is strictly southwest of $+_a$ in $\mathcal P$}\}\]
\[B'=\{a: +_a \text{\ in ${\mathcal S}$ is strictly southwest of $+_a$ in $\mathcal P'$}\}\]
\[C=\{a: +_a \text{\ in ${\mathcal T}$ is strictly southwest of $+_a$ in $\mathcal R$}\}\]
\[C'=\{a: +_a \text{\ in ${\mathcal T}$ is strictly southwest of $+_a$ in $\mathcal S$}\}\]
Summarizing, we have:
\begin{equation}\nonumber
\label{eqn:schematic}
\xygraph{
!{<0cm,0cm>;<1cm,0cm>:<0cm,1cm>::}
!{(0.75,0) }*+{\mathcal P}="p"
!{(2.25,0) }*+{\mathcal P'}="q"
!{(0,-1.25) }*+{\mathcal R}="r1"
!{(3,-1.25) }*+{\mathcal S}="r2"
!{(1.5,-2) }*+{\mathcal T}="r"
"p"-"r1"_{A}
"p"-"r2"_(0.7){B}
"q"-"r1"^(0.7){A'}
"q"-"r2"^{B'}
"r1"-"r"_{C}
"r2"-"r"^{C'}
}
\end{equation}

Since $\mathcal T<\mathcal R<\mathcal P$ and $\mathcal T<\mathcal S<\mathcal P$, 
\begin{equation}
\label{eqn:thefirst567}
A\cup C=\{a:+_a \text{ in } \mathcal T \text{ is strictly southwest of } +_a \text{ in } \mathcal P\}=B\cup C'.
\end{equation}
Similarly,
\begin{equation}
\label{eqn:thesecond567}
A'\cup C=B'\cup C'.
\end{equation}
  By the construction of ${\mathcal R}$ and ${\mathcal T}$ we have 
\begin{equation}
\label{eqn:blah878}
C\cap C'=\emptyset
\end{equation}
and
\begin{equation}
\label{eqn:blah999}
A\cap A'=\emptyset.
\end{equation} 
Intersecting both sides of (\ref{eqn:thefirst567}) by $C'$ gives:
\[(A\cup C)\cap C'=(B\cup C')\cap C' \iff (A\cap C')\cup(C\cap C')=(B\cap C')\cup(C'\cap C').\]
By (\ref{eqn:blah878}) we have $A\cap C'=C'$, which in turn implies $C'\subseteq A$.  
Likewise, (\ref{eqn:thesecond567}) implies $C'\subseteq A'$.  Therefore by (\ref{eqn:blah999}), $C'=\emptyset$ holds. This shows $\mathcal R\geq \mathcal T=\mathcal S$, as claimed. \qed

This completes the proof of the proposition.
\qed

\begin{lemma}
\label{lemma:meetporism}
Fix $\mathcal P,\mathcal P'\in {\tt MinPlus}(X_u')$ (where $u$ is biGrassmannian), and let \[\mathcal P=:\mathcal P_1\mapsto \mathcal P_2\mapsto \ldots 
\mapsto \mathcal P_h \mapsto \mathcal P_{h+1}\mapsto
\ldots
\mapsto \mathcal P_{\ell}\mapsto \mathcal P_{\ell +1}:=\mathcal P'\] be the sequence (\ref{eqn:whatever123}) in Proposition~\ref{prop:moveorder} (II).  Then for any $1\leq h\leq \ell+1$, 
${\mathcal P}_h\subseteq {\mathcal P}\cup {\mathcal P}'$. In particular, 
if ${\mathcal R}={\mathcal P}\wedge{\mathcal P}'$ in
the lattice $({\tt MinPlus}(X_u'),<)$ (where $u$ is biGrassmannian), then
${\mathcal R}\subseteq {\mathcal P}\cup {\mathcal P}'$.
\end{lemma}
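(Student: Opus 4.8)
The plan is to walk along the sequence (\ref{eqn:whatever123}) one long move at a time, tracking the cell occupied by each individual $+$, and to observe that no $+$ ever leaves the overlay $\mathcal{P}\cup\mathcal{P}'$.

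First I would unwind the structure of the sequence supplied by Proposition~\ref{prop:moveorder}(I). The list $\Lambda$ is the concatenation of the three pairwise disjoint lists ${\tt SAME},{\tt SW},{\tt NE}$, whose union is $\{1,\dots,\ell\}$, so each label occurs exactly once in $\Lambda$. The transition ${\mathcal P}_h\mapsto{\mathcal P}_{h+1}$ is a long move applied to the single $+$ with label $\Lambda_h$; since a long move is a composition of the local moves (\ref{local1}) or (\ref{local2}), and each such local move relocates only the one $+$ on which it acts, during step $h$ no $+$ other than $+_{\Lambda_h}$ changes position. Moreover the local moves keep a $+$ on its own antidiagonal, and the labeling $+_1,+_2,\dots$ of an element of ${\tt MinPlus}(X_u')$ is transported from $\mathcal{D}_{\tt bot}(u)$ by Lemma~\ref{lemma:localmoves}(II); hence the position of a given $+_a$ in $\mathcal{P}$ and its position in $\mathcal{P}'$ lie on a common antidiagonal.

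Next I would prove, by induction on $h$, the following precise statement: \emph{in $\mathcal{P}_h$ the $+$ labeled $+_a$ sits at its $\mathcal{P}'$-position if $a\in\{\Lambda_1,\dots,\Lambda_{h-1}\}$, and at its $\mathcal{P}$-position otherwise} (for $a\in{\tt SAME}$ these two positions agree, so there is nothing to check in that case). The base case $h=1$ holds since $\mathcal{P}_1=\mathcal{P}$. For the inductive step: because $\Lambda_h\notin\{\Lambda_1,\dots,\Lambda_{h-1}\}$, the induction hypothesis puts $+_{\Lambda_h}$ at its $\mathcal{P}$-position in $\mathcal{P}_h$; step $h$ moves only this $+$, and since $+_{\Lambda_h}$ is moved by exactly one long move in the whole sequence, starts (in $\mathcal{P}_1=\mathcal{P}$) at its $\mathcal{P}$-position, and ends (in $\mathcal{P}_{\ell+1}=\mathcal{P}'$) at its $\mathcal{P}'$-position, that one long move must carry it directly from the former to the latter; all other $+$'s are unchanged, which is exactly the assertion for $\mathcal{P}_{h+1}$. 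It follows that every $+$ of $\mathcal{P}_h$ occupies a cell belonging to $\mathcal{P}$ or to $\mathcal{P}'$, i.e.\ $\mathcal{P}_h\subseteq\mathcal{P}\cup\mathcal{P}'$.

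For the last clause, recall from the proof of Proposition~\ref{prop:moveorder}(II) that $\mathcal{R}=\mathcal{P}\wedge\mathcal{P}'$ is precisely the term $\mathcal{P}_{\#{\tt SAME}+\#{\tt SW}+1}$ of the sequence, so $\mathcal{R}\subseteq\mathcal{P}\cup\mathcal{P}'$ is a special case of what was just shown. I do not anticipate a genuine difficulty; the one point demanding care is the claim that the step-$h$ long move deposits $+_{\Lambda_h}$ exactly on its $\mathcal{P}'$-position rather than at some intermediate cell of its antidiagonal — this is forced by the bookkeeping that each label appears once in $\Lambda$ together with the fact, from Proposition~\ref{prop:moveorder}(I), that the whole sequence composes to the transformation carrying $\mathcal{P}$ to $\mathcal{P}'$.
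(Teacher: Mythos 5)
Your argument is correct and is essentially the paper's own: the paper proves the first claim by appealing to exactly the bookkeeping you spell out (each step of (\ref{eqn:whatever123}) moves only $+_{\Lambda_h}$, directly from its $\mathcal P$-position to its $\mathcal P'$-position, so every $+$ of $\mathcal P_h$ sits in a cell of $\mathcal P$ or of $\mathcal P'$), and it obtains the second claim, as you do, from the identification $\mathcal R=\mathcal P_{\#{\tt SAME}+\#{\tt SW}+1}$ established in the proof of Proposition~\ref{prop:moveorder}(II). Your write-up merely makes explicit the induction that the paper leaves implicit in the phrase ``follows from the construction of the sequence.''
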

\begin{proof}
The claim about ${\mathcal P}_h$ follows from the construction of the sequence (\ref{eqn:whatever123}) in
Proposition~\ref{prop:moveorder}(II). The claim about ${\mathcal R}$ holds since in the proof of
Proposition~\ref{prop:moveorder}(II), we have shown ${\mathcal R}=\mathcal P_h$ (where $h=\#{\tt SAME}+\#{\tt SW}+1$).
\end{proof}

\subsection{Proof of Theorem~\ref{theorem:overlay}}
(I): Fix $w\in S_n$, $\mathcal P\in {\tt MinPlus}(X_w')$.  Let 
\[\mathcal Q=(\mathcal P_1 , \mathcal P_2, \dots , \mathcal P_k), \mathcal Q'=(\mathcal P'_1 ,\mathcal P'_2, \dots , \mathcal P'_k)
\in {\tt Multi}({\mathcal P}).\] 
By Proposition~\ref{prop:moveorder}, we may connect $\mathcal P_1$ to $\mathcal P_1'$ by long moves, as in (\ref{eqn:whatever123}):  
\[\mathcal P_1=\mathcal P_{1,1}\mapsto \mathcal P_{1,2}\mapsto \cdots 
{\mathcal P}_{1,h}\mapsto {\mathcal P}_{1,h+1}
\mapsto \cdots \mapsto \mathcal P_{1,\ell+1}=\mathcal P_1'.\] 
By Lemma~\ref{lemma:meetporism}, $\mathcal P_{1,h},\subseteq \mathcal P_1\cup \mathcal P_1'\subseteq \mathcal P$. Hence
\[{\tt supp}(\mathcal P_{1,h},\mathcal P_2,\cdots, \mathcal P_k)\subseteq\mathcal P.\]  
Since ${\mathcal P}\in {\tt MinPlus}(X_w')$, this containment is an
equality.  That is, each $(\mathcal P_{1,h},\mathcal P_2,\cdots \mathcal P_k)\in {\tt Multi}({\mathcal P})$. In the case $h=\ell+1$, we reach $(\mathcal P'_1 , \mathcal P_2, \mathcal P_3, \ldots , \mathcal P_k)$ from $(\mathcal P_1 , \mathcal P_2,  \mathcal P_3, \ldots , \mathcal P_k)$.

Continuing in this manner, one connects  
\[(\mathcal P'_1 , \mathcal P_2, \mathcal P_3, \ldots , \mathcal P_k) \mbox{\ to \ } (\mathcal P'_1 , \mathcal P'_2, \mathcal P_3, \ldots , \mathcal P_k),\] 
by long moves, that keep one in ${\tt Multi}({\mathcal P})$, until one reaches  $(\mathcal P'_1 , \mathcal P'_2, \ldots , \mathcal P'_k)$.

(II): This is Proposition~\ref{prop:moveorder}(II).

(III): 
  Let  $\mathcal Q,\mathcal Q'\in {\tt Multi}(\mathcal P)$, as in (I).  Since by definition ${\tt Multi}(\mathcal P)$ is a subposet of the lattice ${\tt Multi}(w)$, it suffices to show  $\mathcal Q\wedge \mathcal Q'\in {\tt Multi}(\mathcal P)$.  (The argument for the join is similar.)
 By Proposition~\ref{prop:moveorder}, for each $i$, 
there is $\mathcal R_i=\mathcal P_i\wedge\mathcal P_i'$. In general, the meet in a Cartesian product of lattices is formed by taking the meet in each component. Therefore, 
\[{\mathcal Q}\wedge{\mathcal Q}'=(\mathcal R_1, \mathcal R_2 , \ldots , \mathcal R_k)\in {\tt Multi}(w).\] 

 By Lemma~\ref{lemma:meetporism},
$\mathcal R_i\subseteq \mathcal P_i\cup \mathcal P_i' \subseteq \mathcal P$.  Hence
${\tt supp}({\mathcal Q}\wedge{\mathcal Q}')\subseteq \mathcal P$. However, since $\mathcal P\in {\tt MinPlus}(X_w')$, 
we must have ${\tt supp}({\mathcal Q}\wedge{\mathcal Q}')=\mathcal P$, i.e., 
$\mathcal Q\wedge\mathcal Q'\in {\tt Multi}(\mathcal P)$, as desired.  \qed

The ``{\tt MinPlus}'' hypothesis of Theorem~\ref{theorem:overlay} is necessary, as we now demonstrate:
\begin{example}[${\tt Multi}(\mathcal P)$ for non minimal plus diagrams]
\label{ex:notlattice}
Let $w=14253$.  Then we have
${\tt biGrass}(w)=\{{\color{blue}14235},{\color{Plum}12453}\}$.  Let 
\[\mathcal P=\left [ \begin{array}{ccccc}
\cdot & \cdot & +& \cdot &\cdot\\ 
+ & + & \cdot & \cdot &\cdot\\ 
\cdot & + & \cdot & \cdot &\cdot\\ 
\cdot & \cdot & \cdot & \cdot &\cdot\\ 
\cdot & \cdot & \cdot & \cdot &\cdot\\ 
\end{array}\right]\in {\tt Plus}(X_w')\setminus  {\tt MinPlus}(X_w').\]
Observe
\[{\tt Multi}(\mathcal P)=\left\{
\left [ \begin{array}{ccccc}
\cdot & \cdot & \color{Plum}+ & \cdot &\cdot\\ 
\color{blue}+ & \color{blue}+ & \cdot & \cdot &\cdot\\ 
\cdot & \color{Plum}+ & \cdot & \cdot &\cdot\\ 
\cdot & \cdot & \cdot & \cdot &\cdot\\ 
\cdot & \cdot & \cdot & \cdot &\cdot\\ 
\end{array}\right],
\left [ \begin{array}{ccccc}
\cdot & \cdot & \color{blue}+ & \cdot &\cdot\\ 
\color{blue}+ & \color{Plum}+ & \cdot & \cdot &\cdot\\ 
\cdot& \color{Plum}+ & \cdot & \cdot &\cdot\\ 
\cdot & \cdot & \cdot & \cdot &\cdot\\ 
\cdot & \cdot & \cdot & \cdot &\cdot\\ 
\end{array}\right]
\right\}.\]
${\tt Multi}(\mathcal P)$ consists of two incomparable elements, so is in particular not a lattice. \qed
 \end{example}

\subsection{Conclusion of the proof of Theorem~\ref{theorem:main}}
By Lemma~\ref{lemma:mainobs}(II), ${\tt Multi}(\mathcal P)\neq \emptyset$. In addition, by Theorem~\ref{theorem:overlay}, ${\tt Multi}(\mathcal P)$ is a finite lattice and thus has a unique minimum
${\mathcal M}_{\mathcal P}$. Let 
\[\overline{{\tt Multi}(w)}:=\{{\mathcal M}_{{\mathcal P}}:\mathcal P\in {\tt MinPlus}(X'_w)\}.\]  
Hence, trivially, we have a bijection 
\[\Psi:\overline{{\tt Multi}(w)} \to {\tt MinPlus}(X_w').\] 

Let ${\tt AllPrism}(w)$ denote the set of  \emph{all} prism tableaux and ${\tt MinPrism}(w)$ the set of minimal prism tableaux for $w$. From the definitions, 
\[{\tt Prism}(w)\subseteq {\tt MinPrism}(w)\subseteq {\tt AllPrism}(w).\]  


\begin{claim}
\label{claim:Phithing}
There is a bijection $\Phi:{\tt AllPrism}(w)\rightarrow {\tt Multi}(w)$. 
\end{claim}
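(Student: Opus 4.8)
The plan is to construct $\Phi$ by reading off, from each prism tableau, a multi-plus diagram, one color at a time. Recall that the biGrassmannian $u_e$ attached to $e=(i,j)\in\mathcal Ess(w)$ has a single essential box, and $\mathcal D_{\tt bot}(u_e)$ is the $(i-r_w(i,j))\times(j-r_w(i,j))$ rectangle of $+$'s with southwest corner at $(i,1)$ — exactly the rectangle $R_e$ defining the shape $\lambda(w)$. By Lemma~\ref{lemma:localmoves}(II), every $\mathcal P_e\in{\tt MinPlus}(X'_{u_e})$ is reached from $\mathcal D_{\tt bot}(u_e)$ by northeast moves, and (by the same standard $RC$-graph correspondence cited after Theorem~\ref{theorem:main}) such diagrams are in bijection with reverse-semistandard/flagged fillings of $R_e$: the $+$ in position $(a,b)$ of $\mathcal P_e$ records that the corresponding box of $R_e$, in the fixed bijection of Lemma~\ref{lemma:localmoves}(II), carries the value equal to its antidiagonal index in $\mathcal P_e$. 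I would first make this single-color dictionary precise: a color-$e$ filling satisfying (S1)--(S4) corresponds bijectively to $\mathcal P_e\in{\tt MinPlus}(X'_{u_e})$, where a label $\ell$ sitting in box $\beta$ of $R_e$ means ``the $+$ that $\beta$ tracks lies on the $\ell$-th antidiagonal of $\mathcal P_e$.'' Conditions (S2),(S3) translate to the $+$'s of $\mathcal P_e$ being a legitimate northeast-displacement of $\mathcal D_{\tt bot}(u_e)$ (weakly decreasing rows / strictly increasing columns = the $+$'s stay in the right relative positions, cf.\ Lemma~\ref{lemma:weaklysouthwest}), and (S4), flagging, is exactly the constraint that a $+$ cannot move past row~$1$, i.e.\ its antidiagonal index is at most its row — which is how the biGrassmannian rectangle sits inside the $n\times n$ grid.

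Next I would define $\Phi(T)=(\mathcal P_1,\dots,\mathcal P_k)$ by applying the single-color dictionary to each color of $T$ independently. The key point making this well-defined and a bijection is that a prism tableau is, a priori, precisely a $k$-tuple of fillings of the rectangles $R_{e_1},\dots,R_{e_k}$ — all living inside the common shape $\lambda(w)=\bigcup_e R_e$ — each satisfying (S1)--(S4); there is no interaction condition at the level of ${\tt AllPrism}(w)$ (the minimality and unstable-triple conditions, which do couple the colors, are what cut ${\tt AllPrism}(w)$ down to ${\tt Prism}(w)$, and those are handled in later claims, not here). So $\Phi$ is literally the product of $k$ single-color bijections, hence a bijection ${\tt AllPrism}(w)\to\prod_{i=1}^k{\tt MinPlus}(X'_{u_i})={\tt Multi}(w)$. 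I would also note that the inverse is the evident one: given $(\mathcal P_1,\dots,\mathcal P_k)$, place in each box $\beta\in R_{e_i}$ the color-$e_i$ label equal to the antidiagonal index (in $\mathcal P_i$) of the $+$ that $\beta$ tracks under Lemma~\ref{lemma:localmoves}(II).

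The step I expect to be the real work is pinning down the single-color dictionary rigorously — i.e.\ verifying that ``fillings of the rectangle $R_e$ obeying (S2),(S3),(S4)'' is exactly the set ${\tt MinPlus}(X'_{u_e})$ under the stated correspondence. Concretely, one must show: (a) the map $\mathcal P_e\mapsto$ (box $\mapsto$ antidiagonal index of its $+$) does land in a filling of $R_e$ obeying (S1)--(S4) — (S1) is automatic since the $+$'s never leave the relevant staircase region, (S2)/(S3) follow from Lemmas~\ref{lemma:weaklysouthwest} and \ref{lemma:relativeorder} controlling the relative NE/SW positions of $+_a$ versus $+_b$, and (S4) from the fact that row-$1$ is the hard wall for northeast moves; and (b) conversely, any such filling is realized — here one uses Lemma~\ref{lemma:localmoves}(II) to lift a filling to a sequence of northeast moves from $\mathcal D_{\tt bot}(u_e)$, checking the antidiagonal data is consistent step by step (this is essentially the classical fact that $RC$-graphs for a biGrassmannian permutation biject with flagged tableaux of a rectangle). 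I would isolate (a) and (b) as a short separate lemma if space permits, since once it is in hand the bijection $\Phi$ is a one-line consequence of ${\tt Multi}(w)$ being by \emph{definition} the product of the ${\tt MinPlus}(X'_{u_i})$.
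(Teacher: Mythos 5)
Your overall strategy is the paper's: build a single-color dictionary between fillings of $R_e$ satisfying (S1)--(S4) and ${\tt MinPlus}(X'_{u_e})$, then apply it independently to each color, observing that ${\tt AllPrism}(w)$ imposes no cross-color conditions, so $\Phi$ is a product of $k$ bijections onto ${\tt Multi}(w)=\prod_i {\tt MinPlus}(X'_{u_i})$; the paper also disposes of surjectivity of the single-color map via the classical biGrassmannian correspondence with flagged/semistandard rectangular tableaux, exactly as you propose. However, the dictionary you actually state is wrong. Both local moves (\ref{local1}) and (\ref{local2}) slide a $+$ along its antidiagonal (a northeast move sends $(r,c)\mapsto(r-1,c+1)$, preserving $r+c$), and since ${\mathcal D}_{\tt bot}(u_e)$ occupies exactly the boxes of $R_e$, the $+$ tracked by a box $\beta$ under Lemma~\ref{lemma:localmoves}(II) stays on $\beta$'s own antidiagonal in every ${\mathcal P}_e\in{\tt MinPlus}(X'_{u_e})$. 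So ``label of $\beta$ $=$ antidiagonal index of its $+$'' is a constant of the construction: it assigns every ${\mathcal P}_e$ one and the same filling (each box labeled by its own antidiagonal index), which records no information, is not even a prism filling (it increases along rows and violates (S4) for boxes outside column $1$), and in particular is not a bijection. Your own sentence that flagging says ``its antidiagonal index is at most its row'' betrays the slip, since the antidiagonal index of a box weakly exceeds its row.

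The correct dictionary --- the one the paper uses, and the one consistent with the remark in Section~4.1 and with the weight comparison ${\tt wt}(T)={\tt wt}((\Psi\circ\widehat\Phi)(T))$ at the end of the proof of Theorem~\ref{theorem:main} --- records the \emph{row position} of the tracked $+$: since the antidiagonal is frozen, the row is exactly the datum that varies as the $+$ slides. With that repair your outline goes through precisely as in the paper: (S1) is automatic, (S2)--(S3) follow from Lemma~\ref{lemma:weaklysouthwest}, (S4) follows from Lemma~\ref{lemma:localmoves}(II) because the $+$ starts in $\beta$'s row and northeast moves only decrease the row, injectivity is clear, and surjectivity is the known bijection between ${\tt MinPlus}(X'_{u_e})$ and flagged tableaux of the rectangle $R_e$ (the paper cites this rather than reproving it, which you may do as well). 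So the gap is local and easily fixed, but as written the key step --- the single-color bijection --- fails.
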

\begin{proof} We associate each
${\mathcal P}\in {\tt MinPlus}(X_{u_e}')$ with a filling of $R_e$. To do this, notice that by definition
$R_{e}$ sits in $n\times n$ exactly as the $+$'s of ${\mathcal D}_{\tt bot}(u_e)$ do. Hence by Lemma~\ref{lemma:localmoves}(II)
there is a bijection between the $+$'s of ${\mathcal P}$ and the boxes of $R_{e}$. 

Assign to each box of 
$R_e$ the colored label that is the row position of that boxes' associated $+$ in ${\mathcal P}$. Then these
labels of color $e$ satisfy (S1) by definition. That they satisfy (S2) and (S3) follows from Lemma~\ref{lemma:weaklysouthwest}. Finally, (S4) holds by Lemma~\ref{lemma:localmoves}(II). 

The map we have just described from ${\mathcal P}\in {\tt MinPlus}(X_{u_e}')$ and the (S1)-(S4) fillings ${\mathcal S}$ of $R_e$ is clearly injective. That it is a surjection 
follows since the tableaux ${\mathcal S}$ are clearly in weight-preserving bijection with the semistandard 
tableaux for ${\mathfrak S}_{u_e}$ \cite[Proposition~2.6.8]{Manivel}, which are known to be in bijection with
${\mathcal P}\in {\tt MinPlus}(X_{u_e}')$, see, e.g., \cite[Proposition~5.3]{KMY} (and for an earlier reference,
see \cite{Kogan}).  

Now, given $T\in {\tt AllPrism}(w)$ we
construct $\Phi(T):=({\mathcal P}_1,\ldots, {\mathcal P}_k)\in {\tt Multi}(w)$ 
by applying the above correspondence independently to each $R_e$. That this
is a bijective map follows from the bijectivity on each component.
\end{proof}

\begin{corollary}
\label{cor:Phirestricts}
$\Phi$ restricts to a bijection  
$\widetilde{\Phi}:{\tt MinPrism}(w) \rightarrow {\tt supp}^{-1}({\tt MinPlus}(X_w'))\subseteq {\tt Multi}(w)$. 
\end{corollary}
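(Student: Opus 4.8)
The plan is to show that the bijection $\Phi:{\tt AllPrism}(w)\to{\tt Multi}(w)$ of Claim~\ref{claim:Phithing} carries minimal prism tableaux precisely to multi-plus diagrams whose support is a \emph{minimal} plus diagram, i.e., that $\Phi(T)\in{\tt supp}^{-1}({\tt MinPlus}(X_w'))$ if and only if $T\in{\tt MinPrism}(w)$. Since $\Phi$ is already a bijection, restricting it to these two matched subsets yields the asserted bijection $\widetilde\Phi$, and the corollary follows immediately once the equivalence is established.

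First I would unwind what ``support'' means in terms of antidiagonals. If $T\in{\tt AllPrism}(w)$ and $\Phi(T)=({\mathcal P}_1,\ldots,{\mathcal P}_k)$, then under the correspondence of Claim~\ref{claim:Phithing} a box of $R_{e}$ carrying the label $\ell$ of color $e$ corresponds to a $+$ of ${\mathcal P}_e$ sitting in row $\ell$; moreover, by Lemma~\ref{lemma:localmoves}(II) and Lemma~\ref{lemma:weaklysouthwest} that $+$ and the original box of $R_e$ lie on the same antidiagonal. Hence the support ${\tt supp}(\Phi(T))={\mathcal P}_1\cup\cdots\cup{\mathcal P}_k$ has, on the $i$-th antidiagonal, exactly one $+$ in row $\ell$ for each distinct value $\ell$ appearing (in any color) on the $i$-th antidiagonal of $T$. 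Therefore the number of $+$'s of ${\tt supp}(\Phi(T))$ on the $i$-th antidiagonal equals $d_i(w)$, the number of distinct values on that antidiagonal of $T$, and so the total number of $+$'s in ${\tt supp}(\Phi(T))$ is $\sum_i d_i(w)$.

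Next I would invoke the geometric count: every ${\mathcal P}\in{\tt MinPlus}(X_w')$ is a facet of $\Delta_{X'_w}$, and since $\Delta_{X'_w}$ is equidimensional (as recalled in Section~2 via \cite{Kalkbrener.Sturmfels}), all facets have the same cardinality, namely $\ell(w)$ — this is exactly the statement, via \cite[Theorem~A,B]{Knutson.Miller:annals}, that the number of $+$'s in a minimal plus diagram for $X_w'$ equals $\ell(w)$ (equivalently, each $RC$-graph has $\ell(w)$ boxes). On the other hand, an arbitrary ${\mathcal P}\in{\tt Plus}(X_w')$ has at least $\ell(w)$ boxes, with equality precisely when ${\mathcal P}$ is minimal (every non-minimal plus diagram strictly contains a facet). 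Combining this with the previous paragraph: $T\in{\tt MinPrism}(w)$ means $\sum_i d_i(w)=\ell(w)$, which by the computation above is exactly the condition that $|{\tt supp}(\Phi(T))|=\ell(w)$, which in turn is exactly the condition that ${\tt supp}(\Phi(T))\in{\tt MinPlus}(X_w')$. Thus $\Phi$ restricts to a bijection $\widetilde\Phi$ between ${\tt MinPrism}(w)$ and ${\tt supp}^{-1}({\tt MinPlus}(X_w'))$, as claimed.

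The one point requiring care — and the main obstacle — is the clean identification of $\#\{+\text{'s of }{\tt supp}(\Phi(T))\text{ on the }i\text{-th antidiagonal}\}$ with $d_i(w)$: one must check that two labels of different colors on the same antidiagonal of $T$ with the \emph{same} value really do map to the \emph{same} $+$ (so they collapse in the support), while two labels with different values map to distinct $+$'s in distinct rows of the same antidiagonal. Both follow from the fact that the row index of the $+$ equals the label and that antidiagonals are preserved under the local moves (Lemmas~\ref{lemma:localmoves}(II), \ref{lemma:weaklysouthwest}), so ``same antidiagonal, same label'' forces ``same box of $n\times n$.'' Once that bookkeeping is in place, the rest is the equidimensionality count, which is already available from Section~2.
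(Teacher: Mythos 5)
Your proposal is correct and follows essentially the same route as the paper: the paper's own proof is the one-line observation that $T\in{\tt MinPrism}(w)$ if and only if ${\tt supp}(\Phi(T))$ has cardinality $\ell(w)$, if and only if ${\tt supp}(\Phi(T))\in{\tt MinPlus}(X_w')$. You have simply made explicit the two facts the paper leaves implicit -- that the antidiagonal-preserving correspondence of Claim~\ref{claim:Phithing} identifies the number of $+$'s of the support on each antidiagonal with the number of distinct labels there, and that equidimensionality of $\Delta_{X_w'}$ forces every plus diagram to have at least $\ell(w)$ many $+$'s with equality exactly for minimal ones -- so no further comment is needed.
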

\begin{proof}
Since $\Phi$ is a bijection, we are only required to show that
\[{\rm im} \ \Phi|_{{\tt MinPrism}(w)}={\tt supp}^{-1}({\tt MinPlus}(X_w')).\] 
However,
this holds, since
a tableau $T\in{\tt AllPrism}(w)$ is in 
${\tt MinPrism}(w)$ if and only if ${\tt supp}(\Phi(T))$ has cardinality $\ell(w)$, i.e. if and only if ${\tt supp}(\Phi(T))\in {\tt MinPlus}(X_w')$. 
\end{proof}
 
\begin{claim}
\label{claim:longMove}
Let $\mathcal Q\in {\tt Multi}(w)$ and let $T=\Phi^{-1}(Q)$.  Then $T$ has an unstable triple  if and only if  there exists a southwest long move $\mathcal Q\mapsto \mathcal Q'$ such that ${\tt supp}(\mathcal Q)={\tt supp}(\mathcal Q')$.
\end{claim}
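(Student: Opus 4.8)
The claim is an ``if and only if'' relating a combinatorial condition on the prism tableau $T$ (an unstable triple) to a condition on $\mathcal Q = \Phi(T)$ (the existence of a support-preserving southwest long move). The plan is to translate the unstable triple condition through the bijection $\Phi$ of Claim~\ref{claim:Phithing} into the language of multi-plus diagrams, where it becomes transparent. Recall that under $\Phi$, a label $\ell_c$ of color $c=u_e$ in a box of $T$ corresponds to a $+$ of color $u_e$ sitting in row $\ell$ in $\mathcal P_e$; the antidiagonal of the box is exactly the antidiagonal $D$ of that $+$; and the values seen on the $i$-th antidiagonal of $T$ are exactly the rows of the $+$'s lying on that antidiagonal in $\mathrm{supp}(\mathcal Q)$. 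So a label $\ell_c$ and $\ell'_c$ in the same antidiagonal $D$ correspond to two positions in $D$, one in row $\ell$ (occupied by a $+$ of color $c$) and one in row $\ell' > \ell$ (strictly southwest of it along $D$).

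\textbf{Key steps.} First I would spell out the $\Leftarrow$ direction. Suppose $\mathcal Q \mapsto \mathcal Q'$ is a southwest long move applied to a $+$ of color $c$, moving it from some position $p$ to a strictly-southwest position $p'$ on the same antidiagonal $D$, with $\mathrm{supp}(\mathcal Q) = \mathrm{supp}(\mathcal Q')$. Support-preservation forces $p'$ to already carry a $+$ of some \emph{other} color in $\mathcal Q$, and $p$ to still carry a $+$ of some other color after the move. Reading off rows: if $p$ is in row $\ell$ and $p'$ in row $\ell' > \ell$, then in $T$ the box of $p$ carries the label $\ell_c$, the box of $p'$ carries a label $\ell'_d$ of another color $d$, and replacing $\ell_c$ by $\ell'_c$ produces exactly $\Phi^{-1}(\mathcal Q')$, which is still a prism tableau (all of (S1)--(S4) survive, since $\mathcal Q'\in {\tt Multi}(w)$). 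The third label $\ell''_e$ on $D$ needed for the unstable triple is precisely a $+$ of a third color at position $p$ witnessing $p\in \mathrm{supp}(\mathcal Q')$ (note $\# \mathcal Ess(w) \geq 3$ is implicit here, or more carefully: $p$ remains in the support, so some color's $+$ sits there); so $\{\ell_c, \ell''_e, \ell'_d\}$ — with $\ell < \ell'$ — is an unstable triple. For $\Rightarrow$, reverse this: given an unstable triple $\{\ell_c, \ell_d, \ell'_e\}$ on antidiagonal $D$ with $\ell < \ell'$ and such that replacing $\ell_c$ with $\ell'_c$ yields a prism tableau, I would produce the southwest long move on the color-$c$ plus diagram $\mathcal P_c$ that slides the $+$ currently in row $\ell$ of $D$ down to row $\ell'$ of $D$ (this is a legal sequence of southwest moves by Lemma~\ref{lemma:localmoves}(I), precisely because the altered filling is still a prism tableau, hence $\Phi$ of it lies in ${\tt MinPlus}(X'_{u_c})$). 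Row $\ell'$ of $D$ is already occupied in $\mathrm{supp}(\mathcal Q)$ by the color-$e$ plus (the label $\ell'_e$), and row $\ell$ of $D$ remains occupied by the color-$d$ plus (the label $\ell_d$, which is unchanged), so support is preserved.

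\textbf{Main obstacle.} The delicate point is the bookkeeping around the \emph{third} label and support-preservation: I must check that an unstable triple as defined (three labels, one of which is ``the other color'' $\ell_d$ that pins down row $\ell$, and a separately-indexed $\ell'_e$ that pins down row $\ell'$) matches exactly the condition that both the vacated cell $p$ and the newly-filled cell $p'$ of the long move stay in the support — and conversely. This requires being careful that the long move is applied to a single $+$ in a single $\mathcal P_i$ (the definition of long move) and that the ``same antidiagonal'' hypothesis in the definition of unstable triple corresponds to the long move staying within one antidiagonal $D$ (which southwest moves do, by construction). I also need Lemma~\ref{lemma:localmoves} to guarantee that ``the altered filling is a prism tableau'' is equivalent to ``$\Phi$ of it is in ${\tt MinPlus}(X'_{u_c})$,'' so that the slide is realized by genuine southwest moves rather than just a formal relocation; this is where the structural results on ${\tt MinPlus}(X'_u)$ for biGrassmannian $u$ do the real work. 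Everything else is a routine unwinding of the dictionary between prism tableaux and multi-plus diagrams established in Claim~\ref{claim:Phithing}.
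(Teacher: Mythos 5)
Your ($\Leftarrow$) direction is essentially the paper's argument and is fine (the aside that $\#{\mathcal Ess}(w)\geq 3$ is implicitly needed is not accurate --- the color at the vacated cell need only differ from $c$ and may coincide with the color at the target cell --- but you hedge this correctly). The genuine gap is in ($\Rightarrow$). Knowing that the altered filling $T'$ is a prism tableau, hence that its color-$c$ component $\mathcal P_c'$ lies in ${\tt MinPlus}(X'_{u_c})$, does not by itself show that $\mathcal P_c\mapsto\mathcal P_c'$ is a \emph{southwest long move}: a long move is by definition a chain of local moves (\ref{local1}) applied to a single $+$, and each local move requires the neighboring cells to be free of color-$c$ $+$'s. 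Lemma~\ref{lemma:localmoves}(I), which you cite at this point, only gives closure of ${\tt MinPlus}(X'_{u_c})$ under local moves and connectedness by \emph{some} sequence of local moves (possibly displacing many $+$'s, in both directions); it does not say that two minimal plus diagrams differing by the relocation of one $+$ along its antidiagonal are joined by a long move on that $+$. So the crux --- that no other $+$ of color $c$ sits in rows $\ell,\ldots,\ell'-1$ of $D_{\rm left}$, rows $\ell+1,\ldots,\ell'$ of $D_{\rm right}$, or rows $\ell+1,\ldots,\ell'$ of $D$, which is exactly what makes the slide from row $\ell$ to row $\ell'$ legal --- is left unproved. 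You do flag this as the delicate point (``realized by genuine southwest moves rather than just a formal relocation''), but you attribute its resolution to the $\Phi$-dictionary plus Lemma~\ref{lemma:localmoves}, which do not supply it.

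The paper closes exactly this gap with Lemma~\ref{lemma:relativeorder}: if a color-$c$ obstruction $+_b$ occupied one of the three blocking regions, then the relative (weakly-southwest) order between $+_b$ and the moved $+$ would differ between $\mathcal P_c$ and $\mathcal P_c'$ (where that $+$ sits in row $\ell'$ of $D$), contradicting the invariance of this order across all of ${\tt MinPlus}(X'_{u_c})$ for $+$'s lying on $D$, $D_{\rm left}$, $D_{\rm right}$; it is precisely here that the unstable-triple hypothesis ``$T'$ is again a prism tableau,'' i.e.\ $\mathcal P_c'\in {\tt MinPlus}(X'_{u_c})$, gets used. To repair your proof you need this obstruction argument (or an equivalent one carried out on labels, comparing (S2)/(S3) in $T$ and $T'$ along $D$ and its neighboring antidiagonals). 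The support-preservation bookkeeping you give on both sides of the equivalence is then the same as the paper's and is correct, up to the cosmetic relabeling of which color sits in row $\ell$ versus row $\ell'$.
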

\noindent
\emph{Proof of Claim~\ref{claim:longMove}:}
($\Rightarrow$) Suppose $T$ has an unstable triple $\{\ell_c,\ell_d,\ell'_e\}$ contained in an antidiagonal $D$.  Let $T'$ be the tableau obtained by replacing $\ell_c$ with $\ell_c'$.  We must show $\mathcal Q$ differs from $\mathcal Q':=\Phi(T')$ by a southwest long move such that ${\tt supp}({\mathcal Q})={\tt supp}({\mathcal Q}')$.  If one could not conduct a southwest long move, there must have been some $+$ of color $c$ in the region consisting of:
\begin{enumerate}
\item rows $\ell,\ell+1,\ldots, \ell'-1$ of $D_{\rm left}$. 
\item rows $\ell+1,\ell+2,\ldots, \ell'$ of $D_{\rm right}$.
\item  rows $\ell+1,\ell+2,\ldots, \ell'$ of $D$.
\end{enumerate}
    Moving the $+$ of color $c$ in row $\ell$ to row $\ell'$ would cause it to appear southwest of $+_b$, contradicting Lemma~\ref{lemma:relativeorder}.

So now assume we have a southwest long move. It remains to check the
support assertion.
The labels $\ell_c$ and $\ell_d$ in $T$ each ensure there is a $+$ in row $\ell$ of $D$ in ${\tt supp}(\mathcal Q)$, while $\ell'_e$ gives a $+$ to row 
$\ell'$ of $D$ in ${\tt supp}(\mathcal Q)$.  Similarly,  $\ell_d$ in $T'$ corresponds to a plus in row $\ell$ of $D$ in ${\tt supp}(\mathcal Q')$, while $\ell'_c$ and $\ell'_e$ gives each ensure there is a $+$ in row $\ell'$ of $D$ in ${\tt supp}(\mathcal Q')$.  So replacing $\ell_c$ in $T$ with $\ell_c'$ in $T'$ gives ${\tt supp}(\mathcal Q)={\tt supp}(\mathcal Q')$. 

($\Leftarrow$) Suppose we may apply a support preserving southwest long move to 
\[\mathcal Q=(\mathcal P_1,\ldots, \mathcal P_c, \ldots, \mathcal P_k)\mapsto \mathcal Q'=(\mathcal P_1,\ldots,\mathcal P_c',\ldots ,\mathcal P_k).\]  
That is, there is an antidiagonal $D\subset n\times n$ such that $\mathcal P_c$ contains a $+$ in row $\ell$ of $D$ that may be moved to row $\ell'>\ell$ by a southwest long move.  Since ${\tt supp}(\mathcal Q)={\tt supp}(\mathcal Q')$, there must be colors $d,e$ with the property that $\mathcal P_d$ has a $+$ in row $\ell$ of $D$ and $\mathcal P_e$ has a $+$ is row $\ell'>\ell$ of $D$.  In $T$, this implies that there are labels $\{\ell_c$, $\ell_d$, $\ell'_e\}$ in $D$.  Let $T'$ be obtained from $T$ by replacing $\ell_c$ with $\ell'_c$.  Then $T'=\Phi(\mathcal Q)\in {\tt AllPrism}(w)$. So $\{\ell_c,\ell_d,\ell'_e\}$ is an unstable triple.
\qed

\begin{claim}
$\Phi$ (further) restricts to a bijection, $\widehat \Phi:{\tt Prism}(w)\to \overline{{\tt Multi}(w)}$.
\label{claim:prismBijection}
\end{claim}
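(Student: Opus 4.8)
The plan is to assemble the claim from the pieces already in hand: we know from Claim~\ref{claim:Phithing} that $\Phi$ is a bijection ${\tt AllPrism}(w)\to{\tt Multi}(w)$, from Corollary~\ref{cor:Phirestricts} that it restricts to a bijection $\widetilde\Phi:{\tt MinPrism}(w)\to{\tt supp}^{-1}({\tt MinPlus}(X_w'))$, and from Claim~\ref{claim:longMove} that $\Phi^{-1}({\mathcal Q})$ has an unstable triple precisely when ${\mathcal Q}$ admits a support-preserving southwest long move. Recall also that $\overline{{\tt Multi}(w)}=\{{\mathcal M}_{\mathcal P}:\mathcal P\in{\tt MinPlus}(X_w')\}$, where ${\mathcal M}_{\mathcal P}$ is the unique minimum of the lattice $({\tt Multi}(\mathcal P),<')$ guaranteed by Theorem~\ref{theorem:overlay}. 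Since $\Phi$ is already a bijection, it suffices to prove the single statement
\[
{\rm im}\,\Phi|_{{\tt Prism}(w)}=\overline{{\tt Multi}(w)},
\]
and since ${\tt Prism}(w)\subseteq{\tt MinPrism}(w)$ while $\overline{{\tt Multi}(w)}\subseteq{\tt supp}^{-1}({\tt MinPlus}(X_w'))$, we may work entirely inside the bijection $\widetilde\Phi$ of Corollary~\ref{cor:Phirestricts}; concretely I will show that for $T\in{\tt MinPrism}(w)$ with $\widetilde\Phi(T)={\mathcal Q}\in{\tt Multi}(\mathcal P)$ (where $\mathcal P={\tt supp}({\mathcal Q})\in{\tt MinPlus}(X_w')$), one has $T\in{\tt Prism}(w)$ if and only if ${\mathcal Q}={\mathcal M}_{\mathcal P}$.

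First I would handle the forward direction. Suppose $T\in{\tt MinPrism}(w)$ but ${\mathcal Q}\neq{\mathcal M}_{\mathcal P}$. Then ${\mathcal Q}$ is not the minimum of the lattice $({\tt Multi}(\mathcal P),<')$, so there is an element of ${\tt Multi}(\mathcal P)$ strictly below it; since the partial order $<'$ is the transitive closure of covering relations given by northeast local moves applied within a single color, and ${\tt Multi}(\mathcal P)$ is connected by long moves by Theorem~\ref{theorem:overlay}(I), we can descend from ${\mathcal Q}$ by a southwest long move ${\mathcal Q}\mapsto{\mathcal Q}'$ that stays in ${\tt Multi}(\mathcal P)$, i.e. is support-preserving. (Here the point is that if ${\mathcal Q}$ is not minimal in the lattice, some component ${\mathcal P}_c$ is not minimal in $({\tt MinPlus}(X_{u_c}'),<)$, hence by Lemma~\ref{lemma:localmoves} admits a southwest local move, and among the resulting elements of ${\tt Multi}(w)$ at least one — namely the meet-directed one supplied by Proposition~\ref{prop:moveorder}(I) and Lemma~\ref{lemma:meetporism} — still has support $\mathcal P$; this is the step I'd expect to need the most care, lining up "not the lattice minimum" with "admits a support-preserving southwest long move".) By Claim~\ref{claim:longMove}, $T$ then has an unstable triple, so $T\notin{\tt Prism}(w)$. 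Contrapositively, $T\in{\tt Prism}(w)\Rightarrow{\mathcal Q}={\mathcal M}_{\mathcal P}$.

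For the reverse direction, suppose ${\mathcal Q}={\mathcal M}_{\mathcal P}$ is the minimum of $({\tt Multi}(\mathcal P),<')$. If $T=\Phi^{-1}({\mathcal Q})$ had an unstable triple, then by Claim~\ref{claim:longMove} there is a support-preserving southwest long move ${\mathcal Q}\mapsto{\mathcal Q}'$; but a southwest long move strictly lowers some component in the order $<$, hence ${\mathcal Q}'<'{\mathcal Q}$, and ${\mathcal Q}'\in{\tt Multi}(\mathcal P)$ by support-preservation, contradicting the minimality of ${\mathcal Q}$ in $({\tt Multi}(\mathcal P),<')$. So $T$ has no unstable triple, and since $T\in{\tt MinPrism}(w)$ already, $T\in{\tt Prism}(w)$. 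Combining the two directions, $\widetilde\Phi$ carries ${\tt Prism}(w)$ bijectively onto $\{{\mathcal Q}\in{\tt supp}^{-1}({\tt MinPlus}(X_w')):{\mathcal Q}={\mathcal M}_{{\tt supp}({\mathcal Q})}\}=\overline{{\tt Multi}(w)}$, and restricting $\Phi$ to ${\tt Prism}(w)$ gives the desired bijection $\widehat\Phi:{\tt Prism}(w)\to\overline{{\tt Multi}(w)}$. $\qed$
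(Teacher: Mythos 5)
Your proposal is correct and follows essentially the same route as the paper: both directions reduce to Claim~\ref{claim:longMove}, with ``$T$ has no unstable triple'' matched against ``$\mathcal Q$ is the minimum of $({\tt Multi}(\mathcal P),<')$'', and the one delicate point --- that a non-minimal $\mathcal Q\in{\tt Multi}(\mathcal P)$ admits a \emph{support-preserving} southwest long move --- is the same point the paper relies on (and you actually justify it more explicitly, via Theorem~\ref{theorem:overlay}(I), Proposition~\ref{prop:moveorder}(I) and Lemma~\ref{lemma:meetporism}, than the paper does).
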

\begin{proof}
Since we know $\Phi$ is a bijection, we need only show that ${\rm im} \ \Phi|_{{\tt Prism}(w)}=\overline{{\tt Multi}(w)}$.

If $\mathcal M_{\mathcal P}\in \overline{{\tt Multi}(w)}$, then $\mathcal M_{\mathcal P}$ is by definition the minimum in ${\tt Multi}(\mathcal P)$. Let $T:={\widetilde \Phi}^{-1}(\mathcal M_{\mathcal P})$ (this exists by Corollary~\ref{cor:Phirestricts}).  By Claim~\ref{claim:longMove} $(\Rightarrow)$, if  $T$ has an unstable triple, then there exists a southwest long move $\mathcal M_{\mathcal P} \mapsto \mathcal Q'$ so that ${\tt supp}(\mathcal M_{\mathcal P})={\tt supp}(\mathcal Q')$.  But then $\mathcal Q'<'\mathcal M_\mathcal P$, a contradiction. Hence $T\in {\tt Prism}(w)$. Thus,
${\rm im} \ \Phi|_{{\tt Prism}(w)}\supseteq\overline{{\tt Multi}(w)}$.

Suppose $T\in {\tt Prism}(w)\subseteq {\tt MinPrism}(w)$, and let $\mathcal Q:=\widetilde\Phi(T)\in  {\tt supp}^{-1}({\tt MinPlus}(X_w'))$. Suppose $\mathcal Q$ is not the minimum element in ${\tt Multi}({\tt supp}(\mathcal Q))$.  Then there exists a southwest long move $\mathcal Q\mapsto \mathcal Q'$, so that ${\tt supp}(\mathcal Q)={\tt supp}(\mathcal Q')$.  Then by Claim~\ref{claim:longMove} $(\Leftarrow)$,  $T$ must have had an unstable triple, contradicting $T\in {\tt Prism}(w)$.  Thus, $\mathcal Q=\mathcal M_{{\tt supp}(\mathcal Q)}\in \overline{{\tt Multi}(w)}$. This shows
${\rm im} \ \Phi|_{{\tt Prism}(w)}\subseteq\overline{{\tt Multi}(w)}$, as required.
\end{proof}

By Claim~\ref{claim:prismBijection}, $\Psi\circ \widehat \Phi:{\tt Prism}(w)\to {\tt MinPlus}(X_w')$ is a bijection. Now,  
  \[{\tt wt}(T)=\prod_{i}x_i^{\text{$\#$ of antidiagonals containing $i$}}
\text{ \ and \ 
${\tt wt}(\mathcal P)=\prod_{i}x_i^{\text{$\#$ of $+$'s in row $i$}}$.}\]
That ${\tt wt}(T)={\tt wt}((\Psi\circ\widehat\Phi)(T))$ is immediate
from these definitions.  Hence the theorem follows.\qed

\section{Further discussion}

\subsection{Comparisons to the literature}
Ultimately, the evaluation of any model for Schubert polynomial rests on
its success towards the \emph{Schubert problem}, i.e., finding a generalized
Littlewood-Richardson rule for Schubert polynomials. Due to the analogy
with {\sf Sym}, one hopes that a solution will not only provide \emph{merely} a rule, but rather lead to an entire companion combinatorial theory. This would 
presumably enrich our understanding of {\sf Pol} and its role in mathematics 
just as the Young tableau theory does for {\sf Sym}. 

That the prism model manifestly uses Young tableaux is our impetus for ongoing investigations that fundamental tableaux algorithms might
admit prism-generalizations.

The first rule for Schubert polynomials was conjectured by \cite{Kohnert}. This rule begins with the diagram of $w$ and evolves other subsets of $n\times n$ by a simple move, the Schubert polynomial is a generating series over these subsets. 
A proof is presented in \cite{Winkel, Winkel:again}.  Arguably, this rule
is the most handy of all known rules, even though the set of Kohnert diagrams does not have a closed description.

Probably the most well-known and utilized formula is given by \cite{BJS}, which expresses the Schubert polynomial 
in terms of reduced decompositions of $w$. This rule is made graphical by the $RC$-graphs of \cite{Bergeron.Billey} (cf. \cite{Fomin.Kirillov}). One can obtain any $RC$-graph for $w$ from any other by the \emph{chute} and \emph{ladder} moves of \cite{Bergeron.Billey}.  


While neither of the above rules transparently reduces to
the tableau rule for Schur polynomials,  it is not too difficult to show in either case, that the objects involved do biject with
semistandard tableaux, see \cite{Kohnert} and \cite{Kogan}
respectively.

We are not aware of any published bijection between the Kohnert rule and any other model for Schubert polynomials. On the other
hand, there is a map between the prism tableaux and
$RC$-graphs: the labels on the $i$-th antidiagonal
indicate the row position of the $+$'s on the same antidiagonal in the associated $RC$-graph. This map is
clearly injective but we do not currently have a purely \emph{combinatorial}
proof that the map is well-defined. Tracing our proof of the main theorem, well-definedness comes from the Gr\"{o}bner basis theorem of \cite{Knutson.Miller:annals}. Moreover, in said proof, we treat each
$RC$-graph as a \emph{specific} overlay of $RC$-graphs for bigrassmannian
permutations. The latter $RC$-graphs are in bijection with semistandard
tableaux of rectangular shape. This is the reason for the ``dispersion'' remark of the introduction.

The work of \cite{balanced} gives a tableau rule for Schubert polynomials of a different flavor. This rule treats ${\mathfrak S}_w$ as a generating series for {\bf balanced fillings} of the diagram of $w$. The reduction to semistandard tableaux for Grassmannian $w$ seems non-trivial.

In \cite{BKTY}, a formula is given for a Schubert polynomial
as a nonnegative integer linear combination of sum of products of Schur functions in disjoint sets of variables (with nontrivial coefficients). 
This is also in some sense a tableau formula for ${\mathfrak S}_w$. 
In \cite{Lenart:review} this result is rederived as a consequence of the crystal
graph structure on $RC$-graphs developed there.

\subsection{Details of the reduction to semistandard tableaux}
We now explicate the reduction from prism tableaux to ordinary semistandard tableaux, as indicated in the introduction.

\excise{Suppose $w\in S_n$ is Grassmannian.  Then $\mathfrak S_w$ is a Schur polynomial, and each minimal prism tableau reduces to a semistandard tableau by eliminating redundant labels, flipping the diagram vertically, and replacing $i\mapsto k-i+1$, where $k$ is the position of the unique descent of $w$.}

\begin{proposition}
\label{prop:grassredux}
Assume $w\in S_n$ is Grassmannian.

(I) The shape $\lambda(w)$ is a Young diagram, in French notation.

(II) Let $T\in {\tt MinPrism}(w)$. All labels of a box of $T$ have the same number.

(III) $T$ does not have unstable triples, i.e., ${\tt MinPrism}(w)={\tt Prism}(w)$.
\end{proposition}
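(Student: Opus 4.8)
The plan is to establish (I), (II), (III) in sequence, using each to feed the next.

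For (I): since $w$ is Grassmannian it has a unique descent, say at position $k$, so $w(1)<\cdots<w(k)$ and $w(k+1)<\cdots<w(n)$. I would compute the essential set directly from this: the boxes of $D(w)$ occupy rows $1,\ldots,k$, and in each such row $i$ the diagram is a prefix $\{(i,1),\ldots,(i,j_i)\}$ with the $j_i$ weakly decreasing in $i$ (this is standard for Grassmannian $w$, e.g.\ \cite[Section~2.2]{Manivel}). Hence $\mathcal{E}ss(w)$ consists of the rightmost box in each "step", all essential boxes lie in distinct rows, and the rank value $r_w(i,j_i)$ at an essential box $e=(i,j_i)$ equals $i$ minus the number of rows among $1,\ldots,i$ that still extend past column $j_i$... more cleanly: because $w$ restricted to $1,\ldots,k$ is increasing, one checks $i - r_w(e) = i$ and $j_i - r_w(e) = j_i - (k - (\text{something}))$ — in any case $R_e$ is the rectangle $[1,i]\times[1,\,\lambda_i]$ for $\lambda_i = j_i - r_w(e)$, and since the rectangles for successive essential boxes are nested (larger height $\Rightarrow$ smaller or equal width, by the weak decrease of $j_i$), their union $\lambda(w)=\bigcup_e R_e$ is precisely the French Young diagram with column-reading shape given by the $\lambda_i$. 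So (I) follows from bookkeeping on the shape of $D(w)$.

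For (II): Let $T\in{\tt MinPrism}(w)$, so $\sum_i d_i(w)=\ell(w)$. Under the bijection $\Phi$ of Claim~\ref{claim:Phithing}, $T$ corresponds to $(\mathcal P_1,\ldots,\mathcal P_k)\in{\tt Multi}(w)$ with $|{\tt supp}(\mathcal P_1\cup\cdots\cup\mathcal P_k)|=\ell(w)$; minimality of the prism tableau is exactly the statement that ${\tt supp}(\Phi(T))\in{\tt MinPlus}(X_w')$ (Corollary~\ref{cor:Phirestricts}). Now $|\lambda(w)|=\sum_e |R_e|=\sum_e \ell(u_e)$, and for Grassmannian $w$ one has $\ell(w)=\sum_e\ell(u_e)$ precisely because the essential rectangles are nested and the antidiagonals of $\lambda(w)$ are in bijection with the Lehmer-code cells (again pure shape bookkeeping). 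Since the support has exactly $\ell(w)$ plus-signs and the overlay $\mathcal P_1\cup\cdots\cup\mathcal P_k$ of the $k$ rectangle-fillings — each $\mathcal P_e$ having $|R_e|$ plus-signs, total $\sum_e|R_e| = \ell(w)$ — has no room to collapse, the overlay must be multiplicity-free as a set of occupied positions. But in the prism-tableau language a box carrying two labels of colors $c,d$ with \emph{different} values would, via $\Phi$, put plus-signs of colors $u_c,u_d$ in two different rows of the same antidiagonal, contributing $2$ to $d_i(w)$ while using cells from both $R_c$ and $R_d$; summing these "extra" contributions against the equality $\sum d_i = \ell(w) = \sum_e|R_e|$ forces every multi-occupied box to be \emph{stacked}, i.e.\ all its labels share one value. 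I would make this an equality-of-counts argument: $\sum_i d_i(w) \le |\lambda(w)|$ always (each antidiagonal cell contributes at most once to a distinct value), with equality iff on every antidiagonal all cells of $\lambda(w)$ in that antidiagonal carry the same label — which, since in a French Young diagram an antidiagonal meets a \emph{contiguous} run of cells and the semistandard conditions (S2),(S3) restricted to a single color force... — and then $|\lambda(w)| = \ell(w)$ gives exactly the minimality hypothesis, so equality holds and (II) follows.

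For (III): Suppose $T\in{\tt MinPrism}(w)$ had an unstable triple $\{\ell_c,\ell_d,\ell'_e\}$ on an antidiagonal $D$ with $\ell<\ell'$. By (II) the cell containing $\ell_c$ actually carries only the value $\ell$ (all its colors agree), and likewise the cell with $\ell'_e$ carries only $\ell'$; by (I) these cells, lying on the same antidiagonal of a French Young diagram, lie on a contiguous run, and since $\ell\ne\ell'$ they are \emph{distinct} cells of $\lambda(w)$. But then the antidiagonal $D$ meets at least two distinct values ($\ell$ and $\ell'$) at two distinct cells, whereas replacing $\ell_c$ by $\ell'_c$ still yields a valid prism tableau of the same weight on the same shape — I'll check via Claim~\ref{claim:longMove} that this replacement corresponds to a support-preserving southwest long move, hence produces $\mathcal Q'<'\Phi(T)$ with ${\tt supp}(\mathcal Q')={\tt supp}(\Phi(T))$, so $\Phi(T)$ is not the minimum of ${\tt Multi}({\tt supp}\,\Phi(T))$. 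On the other hand, by (II) combined with the counting equality in its proof, the overlay underlying $T$ is already multiplicity-free with exactly $\ell(w)$ plus-signs, which forces $\Phi(T)=\mathcal M_{{\tt supp}(\Phi(T))}$ to already be minimal — contradiction. Hence no unstable triple exists and ${\tt MinPrism}(w)={\tt Prism}(w)$.

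\textbf{Main obstacle.} The delicate point is part (II): turning the vague "the overlay has no room to collapse" into a clean equality $\sum_i d_i(w)\le|\lambda(w)|$ with a sharp equality criterion, and then correctly identifying $|\lambda(w)|$ with $\ell(w)$ for Grassmannian $w$ so that the minimality hypothesis is exactly the equality case. Once (II) is in hand, (III) is essentially immediate from Claim~\ref{claim:longMove} and (I); (I) is routine but needs care with the nesting of the rectangles $R_e$.
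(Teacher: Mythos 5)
Your proposal has a genuine gap, concentrated in part (II), and it propagates into (III). The counting framework you build on is based on false identities: for Grassmannian $w$ with at least two essential boxes one has $|\lambda(w)|=\bigl|\bigcup_e R_e\bigr|\neq\sum_e|R_e|=\sum_e\ell(u_e)$, because the rectangles $R_e$ overlap (for $w=246135$: $\ell(w)=|\lambda(w)|=6$ but $\sum_e\ell(u_e)=3+4+3=10$). The correct fact, which the paper uses, is simply $|\lambda(w)|=\ell(w)$. Consequently your conclusion that the overlay is ``multiplicity-free'' is false -- in minimal prism tableaux different colors routinely occupy the same position (see Table~1, e.g.\ $w=3214$) -- and the inequality $\sum_i d_i(w)\leq|\lambda(w)|$ with the justification ``each antidiagonal cell contributes at most once to a distinct value'' presupposes exactly statement (II), since a cell contributing two distinct values is what you must rule out. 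Even the repaired inequality $\sum_i d_i(w)\geq|\lambda(w)|$ (valid because every antidiagonal of $\lambda(w)$ is covered by a single rectangle, along which values are forced distinct) does not finish the job: equality only says the value set of each antidiagonal equals that of a covering color, and a label of another color in a box ${\sf x}$ could take the value that the covering color has in a \emph{different} box of the same antidiagonal. This is precisely the delicate case $\ell'<\ell$ that the paper resolves not by counting but by an extremal argument -- choose the northeast-most ``bad'' box, and in that case locate a bad box strictly further northeast, a contradiction -- and it is exactly where your sketch trails off into ellipses. Your (III) then leans on the broken scaffolding: the assertion that $\Phi(T)$ must already be the minimum of ${\tt Multi}({\tt supp}\,\Phi(T))$ ``because the overlay is multiplicity-free with $\ell(w)$ plus-signs'' is unsupported, and indeed whether $\Phi(T)$ is the fiber minimum is equivalent to what (III) claims, so the argument is circular. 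The paper's (III) avoids the lattice machinery entirely: by (II) the box of $\ell_c$ carries no other label, hence lies only in $R_c$, hence color $c$ covers the whole antidiagonal, hence the box of $\ell'_e$ also carries $\ell'_c$; the replacement $\ell_c\mapsto\ell'_c$ would then repeat the value $\ell'$ for color $c$ on one antidiagonal, violating (S2)--(S3) and contradicting the definition of an unstable triple.

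Two further factual corrections in (I): for Grassmannian $w$ the essential boxes do \emph{not} lie in distinct rows -- they all lie in the single row $k$ of the descent (if $(i,j)\in D(w)$ with $i<k$ then $(i+1,j)\in D(w)$, so no box above row $k$ is southeast-most) -- and the rectangles $R_e$ are anchored with southwest corner at $(k,1)$, occupying rows $r_w(e)+1,\dots,k$, not rows $1,\dots,i$ as you wrote. The paper's proof of (I) is the bookkeeping you intend (widths $a_i-r_w(k,a_i)$ strictly increase while heights decrease, so the bottom-anchored rectangles nest into a French Young diagram), so this part is fixable, but as written your structural description of $\mathcal{E}ss(w)$ and of the $R_e$ is wrong.
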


\begin{proof}
(I): Since $w$ is Grassmannian, it has a unique descent, $w(k)>w(k+1)$.  Furthermore, all essential boxes of $w$ lie in the $k$th row, say in columns $a_1<\ldots< a_j$.  The rectangle $R_{(k,a_i)}$ starts at row $r_w(k,a_i)+1$, which strictly increases as $i$ increases, since essential boxes to the right in the diagram take on higher values for the rank function.  Each rectangle is left justified by construction, and has width $a_i-r_w(k,a_i)$.  This value strictly increases with each $i$, since 
\[a_{i}+(r_w(k,a_{i+1})-r_w(k,a_i))<a_{i+1},\] 
as seen from the diagram of $w$.  So the rectangles $R_{(k,a_i)}$ overlap to form the shape of a partition.

(II): Suppose not. Let ${\sf x}$ be a ``bad'' box, i.e., one with $\ell_c$ and $\ell'_d$ in ${\sf x}$ where $\ell\neq \ell'$ (and thus $c\neq d$). We may assume ${\sf x}$ is the northeast-most bad box. Let $D$ be the antidiagonal containing ${\sf x}$. We may also assume that each box of $D$ contains
a label of color $c$. 

\noindent
{\sf Case 1:} ($\ell'>\ell$): Then by (S2) and (S3) the labels of color $c$ in 
$D$, that are 
strictly northeast of ${\sf x}$, are all distinct and different than both $\ell$ and $\ell'$. For the same reason, all labels of
color $d$ in $D$ strictly southwest of ${\sf x}$ are distinct and different than 
$\ell$ and $\ell'$. Hence the total number of distinct numbers in $D$ exceeds $\#D$.
Since $|\lambda(w)|=\ell(w)$, we conclude $T$ cannot be minimal, a contradiction.

\noindent
{\sf Case 2:} ($\ell'<\ell$): Again by (S2) and (S3), all labels of color $c$ that are in $D$
but strictly southwest of ${\sf x}$ are distinct and are also different than $\ell$ and $\ell'$. Hence
if $D$ is not overfull (i.e., has more distinct numbers than boxes) $\ell'_c$ must appear in a box ${\sf y}$ in 
$D$ that is strictly northeast of ${\sf x}$. By the definition of prism tableaux and $\lambda(w)$, either
there exists: 
\begin{enumerate}
\item a box ${\sf z}$ in the row of ${\sf x}$
and in the column of ${\sf y}$ that contains 
labels $m_c$ and $m'_{d}$, or 
\item a box ${\sf w}$ in the column of ${\sf x}$
and in the row of ${\sf y}$ that contains 
labels $m_c$ and $m'_{d}$.
\end{enumerate}
We may assume the first case occurs (the argument for the other case is the same).
In view of
the $\ell'_c\in {\sf y}$ combined with (S3), $m>\ell'$. On the other hand, in view of the
$\ell'_d\in {\sf x}$ combined with (S2), $m'\leq \ell'$. Hence we see ${\sf z}$ is a bad box strictly
east of ${\sf x}$, a contradiction of the extremality of ${\sf x}$. 

(III): Suppose $T$ has an unstable triple $\{\ell_c,\ell_d,\ell'_e\}$ in antidiagonal $D$.  Let $T'$ be the tableaux obtained by replacing $\ell_c$ with $\ell'_c$.  Then by definition, $T'\in {\tt Prism}(w)$.  By (II), $\ell_c$ must sit in a box containing no other labels.  By the definition of $\lambda(w)$, 
this furthermore implies every box of $D$ in $\lambda(w)$ 
has a label of color $c$. 
(II) then implies 
the box containing $\ell'_e$ must contain a label $\ell'_c$.  This contradicts (S2) and (S3) combined.
 \excise{ If $T$ has an unstable triple, we are able to change a label $\ell$ of color $c$ to a different value.  By (II), this can only happen in a box for which $c$ is the only color.  So the color $c$ extends along the whole antidiagonal.  By the definition of an unstable triples, we have $\ell<\ell'$ of colors $d$ and $d'$ respectively, sitting in the same antidiagonal such that replacing the $\ell$ of color $c$ with $\ell'$ also gives a prism tableau.  However, the box containing the $\ell'$ of color $d'$ must also have a label of color $c$, which takes the value $\ell'$, again by (II).  So such a replacement violates reverse semistandardness, giving a contradiction.}
\end{proof}

\excise{
\begin{proposition}
Let $w$ be a Grassmannian permutation, with its descent at position $k$.  There is a bijection between minimal prism tableau for $w$ and semistandard fillings of $\lambda'=$(the flip of $\lambda(w)$) using the alphabet $\{1,\ldots,k\}$.
\end{proposition}

\begin{proof}
By the above lemma, given a minimal prism tableau $T$ for $w$, we may produce a semistandard tableau of shape $\lambda'$ by flipping $T$ vertically, taking the unique value in each box (ignoring colors), and replacing it with $i\mapsto k-i+1$.  Since the filling was reverse semistandard in each box, the new filling will be semistandard.  Furthermore, if $w$ had its descent at position $k$, then the values in the filling of $\lambda'$ will all be in $\{1,\ldots, k\}$.

For the inverse map, take a semistandard tableaux $T$ of shape $\lambda'$.  As stated before, we have a direct correspondence between the boxes of $\lambda'$ and $\lambda(w)$ by flipping either diagram vertically. Fill each box of $R_e$ by taking the corresponding entry in $T$ and replacing it with $k-i+1$.  Each filling $R_e$ is thus reverse semistandard.  Since every $R_e$ ends at row $k$, by row strictness, the filling is flagged by row number.  Along any antidiagonal, within a given component the labels must strictly decrease.  By the construction of $\lambda(w)$, for each antidiagonal, there exists some color $e$ so that each box in the antidiagonal has a label of color $e$.  So there as many distinct labels as the number of boxes in the antidiagonal.  So 
$\sum_{i=1}^n d_i(w)= \# \text{ boxes in } \lambda(w) = \ell(w)$ 
and thus $T$ is minimal.  So the map is well defined, and clearly the inverse.
\end{proof}
}

\excise{
\subsection{Chute moves on prism tableaux}

We may translate chute moves on pipe dreams into the prism tableaux context.  We start with a local move, for simplicity.  Fix a minimal prism tableau $T$.  Suppose there exists an antidiagonal, $i$, containing a label of value $a$.  Suppose further that the antidiagonals directly above and below contain no $a-1$ and $a$ respectively.  Then we may realize a local move by replacing each $a$ with $a-1$ in the original antidiagonal.  By the assumptions, such a replacement does not violate semistandardness.  One might worry $d_i(w)$ decreases upon making such a replacement, i.e. there is already a label $a-1$ in antidiagonal $i$.  If so, the tableau was not minimal to start, a contradiction.
}

\subsection{Stable Schubert polynomials}
The {\bf stable Schubert polynomial} (also known as the
\emph{Stanley symmetric polynomial}) is the generating series 
defined by 
\[F_w(x_1,x_2,\ldots):=\lim_{m\to \infty} {\mathfrak S}_{1^m\times w},\]
where if $w\in S_n$ then $1^m\times w$ is the permutation in $S_{m+n}$ defined by\[\text{$(1^m\times w)(i)=i$ for 
$1\leq i\leq m$ and $(1^m\times w)(m+i)=m+w(i)$ for $1\leq i\leq n$.}\]

It is true that 
\[F_w(x_1,x_2,\ldots,x_m,0,0,\ldots)={\mathfrak S}_{1^m\times w}(x_1,\ldots,x_m,0,0,\ldots).\] 
Now,
notice that $\lambda(1^m\times w)$ and $\lambda(w)$ are the same shape, but the former is shifted down $m$ steps in the grid
relative to $\lambda(w)$. Therefore it follows that \[F_w(x_1,x_2,\ldots,x_m,0,0,\ldots)=\sum_{T}{\tt wt}(T),\] 
where the
sum is over all \emph{unflagged} (i.e., exclude (S4)) minimal prism tableaux of shape $\lambda(w)$ that use the labels
$1,2,\ldots,m$. In the limit, this argument implies the generating series $F_w(x_1,x_2,\ldots)$ is given by the same formula, except we
allow all labels from ${\mathbb N}$.

\subsection{An overlay interpretation of chute and ladder moves}
In \cite{Bergeron.Billey}, {\bf chute moves} were defined for pipe dreams. These moves are locally of the form
\begin{equation}\label{eqn:chute}
{\mathcal P}=\begin{matrix}
. & + & + & + & \cdots & + & + & .\\
+ & + & + & + & \cdots & + & + & .
\end{matrix}\ \ \ 
\to \ \ \ 
\begin{matrix}
\cdot & + & + & + & \cdots & + & + & +\\
\cdot & + & + & + & \cdots & + & + & \cdot
\end{matrix}={\mathcal Q}
\end{equation}
Suppose $\mathcal P\in {\tt MinPlus}(X_w')$, ${\tt biGrass}(w)=\{u_1,\ldots,u_k\}$ and ${\mathcal P}={\mathcal P}_1\cup\cdots\cup {\mathcal P}_k$, where $\mathcal P_i\in {\tt MinPlus}(X_{u_i}')$. We now show: 
\begin{quotation}
\emph{The chute move's ``long jump'' of a single $+$ may be interpreted as a sequence of the northeast local moves
(\ref{local1}) applied to the ${\mathcal P}_i$'s.}
\end{quotation}

\begin{example}
\label{exa:overlaychute}
 Let $w=1432$. Now, 
${\tt biGrass}(w)=\{u_1=1423, u_2=1342\}$. Consider the following sequence of northeast moves
\[\left[\begin{array}{cccc} 
\cdot&\color{Plum}+&\cdot &\cdot\\
\color{blue}+ &\color{blue}+ \color{Plum}+&\cdot&\cdot\\
\cdot&\cdot&\cdot&\cdot\\
\cdot&\cdot&\cdot&\cdot\\
\end{array}\right]
\rightarrow
\left[\begin{array}{cccc} 
\cdot&\color{Plum}+&\color{blue}+ &\cdot\\
\color{blue}+ &\color{Plum}+&\cdot&\cdot\\
\cdot&\cdot&\cdot&\cdot\\
\cdot&\cdot&\cdot&\cdot\\
\end{array}\right]
\rightarrow
\left[\begin{array}{cccc} 
\cdot&\color{blue}+\color{Plum}+&\color{blue}+&\cdot\\
\cdot &\color{Plum}+&\cdot&\cdot\\
\cdot&\cdot&\cdot&\cdot\\
\cdot&\cdot&\cdot&\cdot\\
\end{array}\right]
\]
Let the support of the first and third plus diagrams be ${\mathcal P}$ and ${\mathcal Q}$, respectively.
We have ${\mathcal P},{\mathcal Q} \in {\tt MinPlus}(X_{w}')$. ${\mathcal P}$ and ${\mathcal Q}$ differ by a chute move.
At the level of the overlays, one sees this transition as an application of (\ref{local1}) to each blue $+$ in the second row.\qed  
\end{example}

Example~\ref{exa:overlaychute} indicates the general pattern.
Let $(i,j)$ be the position of the southwest $+$ of ${\mathcal P}$ in (\ref{eqn:chute}) and $(i-1,j')$ the position of the northeast $+$ of ${\mathcal Q}$ in (\ref{eqn:chute}).  Without loss of generality,
we may assume each ${\mathcal P}_1,\ldots,{\mathcal P}_t$ contains a $+$ at $(i,j)$ while all other
${\mathcal P}_h$ do not.

\begin{claim}
\label{claim:chuteA}
Consider the interval of consecutive $+$'s in row $i$ of ${\mathcal P}_h$ ($1\leq h\leq t$)
starting at the left with the $+$ in position $(i,j)$. One can apply the move 
(\ref{local2}) (in the right to left order) to each of these $+$'s to obtain ${\mathcal P}_1',\ldots,{\mathcal P}_t'$.  
\end{claim}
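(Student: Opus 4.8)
\medskip
\noindent\emph{Proof plan.}
The plan is to reduce the claim to a single emptiness statement about $\mathcal P_h$ and then settle that statement by the ``blocked move'' analysis already used in Claim~\ref{claim:soso234} and Claim~\ref{claim:longMove}. Fix $h$ with $1\le h\le t$, so $(i,j)\in\mathcal P_h$, and let $(i,j),(i,j+1),\dots,(i,j_h)$ be the maximal run of $+$'s of $\mathcal P_h$ in row $i$ beginning at $(i,j)$, so that $(i,j_h+1)\notin\mathcal P_h$. The prescribed procedure processes the columns $c=j_h,j_h-1,\dots,j$ in turn, each time applying $(\ref{local2})$ to move the $+$ at $(i,c)$ up to $(i-1,c+1)$. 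When column $c$ is reached, the $+$'s placed by the earlier steps all lie in columns $\ge c+2$, and no cell of row $i-1$ has been emptied, so $(i-1,c)$ and $(i-1,c+1)$ still have their $\mathcal P_h$-occupancy, while $(i,c+1)$ is empty (it was vacated at the previous step if $c<j_h$, and was never occupied if $c=j_h$). Hence the $2\times 2$ window at rows $i-1,i$ and columns $c,c+1$ displays the pattern $\left[\begin{smallmatrix}\cdot&\cdot\\ +&\cdot\end{smallmatrix}\right]$ required for $(\ref{local2})$ \emph{exactly when} $(i-1,c)\notin\mathcal P_h$ for every $c$ with $j\le c\le j_h+1$; call this statement $(\star)$. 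Granting $(\star)$, Lemma~\ref{lemma:localmoves}(I) --- closure of ${\tt MinPlus}(X'_{u_h})$ under $(\ref{local2})$ --- shows that every intermediate diagram, and in particular the final diagram $\mathcal P'_h$, lies in ${\tt MinPlus}(X'_{u_h})$, which proves the claim.

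So it remains to prove $(\star)$. Since $\mathcal P_h\subseteq\mathcal P$ and $\mathcal P$ has the chute shape $(\ref{eqn:chute})$ --- in which $(i-1,j)$ and $(i-1,j')$ are unoccupied, the occupied cells of rows $i-1,i$ inside columns $[j,j']$ are $\{(i-1,c):j<c<j'\}\cup\{(i,c):j\le c<j'\}$, and in particular $j_h<j'$ --- the cases $c=j$ always, and $c=j_h+1$ when $j_h+1=j'$, are immediate; only $j<c\le\min(j_h+1,\,j'-1)$ remains. Suppose toward a contradiction that $(i-1,c)\in\mathcal P_h$ for some such $c$. Then $\mathcal P_h$ contains the $+$ at $(i-1,c)$, the run $(i,j),(i,j+1),\dots,(i,c-1)$, and the hole at $(i-1,j)$. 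The cells $(i,c-1)$ and $(i-1,c)$ lie on a common antidiagonal $D$ with $(i,c-1)$ strictly southwest of $(i-1,c)$, so in $\mathcal P_h$ the $+$ at $(i,c-1)$ is weakly southwest of the $+$ at $(i-1,c)\in D$, while consecutive members of the run are weakly southwest of one another on consecutive antidiagonals. This is precisely the ``blocking'' configuration ruled out in the proof of Claim~\ref{claim:soso234}: the $+$ at $(i-1,c)$ occupies, relative to the run, a region no plus diagram of $X'_{u_h}$ can realize, because (by Lemma~\ref{lemma:relativeorder} and Lemma~\ref{lemma:weaklysouthwest}, with Lemma~\ref{lemma:localmoves}(II) transporting the configuration to the rectangle $\mathcal D_{\tt bot}(u_h)$, whose rows are left-justified blocks of $+$'s) the $+$'s forced near column $j$ in rows $i-1,i$ cannot be accommodated once $(i-1,j)$ is required to be empty. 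This contradiction gives $(\star)$, and the claim follows.

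The main obstacle is this last step: converting the co-occurrence of the $+$ at $(i-1,c)$ with the leftward run $(i,j),\dots,(i,c-1)$ --- together with $(i-1,j)\notin\mathcal P_h$ --- into an outright contradiction. I expect to carry it out by mimicking, essentially verbatim, the two-case analysis ($b<\Lambda_h$ versus $b>\Lambda_h$) in the proof of Claim~\ref{claim:soso234}, with a $+$ of the run in the role of $+_{\Lambda_h}$ and the $+$ at $(i-1,c)$ in the role of the forbidden $+_b$. The delicate point is the antidiagonal bookkeeping: Lemma~\ref{lemma:relativeorder} compares only $+$'s lying on equal or adjacent antidiagonals, so one must walk along the run one column at a time rather than jump directly from $(i,j)$ to $(i-1,c)$. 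The reduction of the first paragraph and the closure argument via Lemma~\ref{lemma:localmoves}(I) are routine.
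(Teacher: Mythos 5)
Your reduction is sound and matches the paper's: the moves are processed right to left, the cells of row $i$ to the right are vacated as you go (or empty by maximality of the run), so the only possible obstruction to an application of (\ref{local2}) is a $+$ of $\mathcal P_h$ in row $i-1$ over the run; equivalently your statement $(\star)$. The problem is that $(\star)$ is exactly the content of the claim, and you do not prove it --- you defer it, saying you expect to mimic ``essentially verbatim'' the two-case analysis ($b<\Lambda_h$ versus $b>\Lambda_h$) of Claim~\ref{claim:soso234}. That template does not transfer: the argument there is not a standalone fact about a single minimal plus diagram; it leans on the auxiliary data of a \emph{second} diagram $\mathcal P'$, the induced labelling of $+$'s, and the move schedule $\Lambda$ built from comparing $\mathcal P$ with $\mathcal P'$. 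In the chute setting there is no target diagram and no $\Lambda$, so ``the blocking configuration ruled out in Claim~\ref{claim:soso234}'' is not something you can cite; the sentence ``the $+$'s forced near column $j$ in rows $i-1,i$ cannot be accommodated once $(i-1,j)$ is required to be empty'' is precisely the assertion that still needs a proof.

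What the paper does instead is an intrinsic local argument: from Lemma~\ref{lemma:relativeorder} (using that the local moves preserve each $+$'s antidiagonal, so one can transport relative positions to ${\mathcal D}_{\tt bot}(u_h)$, where adjacency of two $+$'s on an antidiagonal forces $+$'s in the neighboring cells of the rectangle) one shows that none of the $2\times 2$ windows
$\left[\begin{smallmatrix}\cdot&+\\ +&+\end{smallmatrix}\right]$,
$\left[\begin{smallmatrix}\cdot&+\\ +&\cdot\end{smallmatrix}\right]$,
$\left[\begin{smallmatrix}+&+\\ +&\cdot\end{smallmatrix}\right]$
can occur in ${\tt MinPlus}(X'_u)$ for $u$ biGrassmannian. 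Then your walk along the run does the rest: if $(i-1,c)\in\mathcal P_h$ with $j<c\le j_h+1$, slide leftward column by column; since $(i-1,j)\notin\mathcal P\supseteq\mathcal P_h$ (the chute shape), you must eventually hit a window of one of the three forbidden types, a contradiction. Your sketch has the right ingredients in view (Lemma~\ref{lemma:relativeorder}, transport to ${\mathcal D}_{\tt bot}(u_h)$, walking one antidiagonal at a time), but until you formulate and prove a forbidden-configuration statement of this kind --- rather than invoking Claim~\ref{claim:soso234} --- the central step of the proof is missing.
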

\begin{proof}
It follows from Lemma~\ref{lemma:relativeorder} that the configurations
given below do not appear in ${\tt MinPlus}(X_u')$ whenever $u$ is biGrassmannian:
\[\left\{ \left[
\begin{matrix}\cdot&+\\+&+\end{matrix}\right], \left[\begin{matrix}\cdot &+\\+&\cdot\end{matrix}\right],\left[\begin{matrix}+&+\\+&\cdot\end{matrix}\right]
\right\}.\]
Suppose there is an obstruction to one of the local moves.  It must appear in row $i-1$.  Due to the $+$ in position $(i,j)$, such an obstruction necessarily forces one of the above configurations to appear, causing a contradiction.\end{proof}

\begin{claim}
\label{claim:chuteContain}
$\mathcal P_h'\subseteq \mathcal Q$ for $1\leq h \leq t$ and $\mathcal P_h\subseteq \mathcal Q$ for $t+1\leq h\leq k$.
\end{claim}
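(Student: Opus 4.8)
The plan is to verify both containments by tracking, $+$ by $+$, where each symbol of each $\mathcal P_h$ (resp. $\mathcal P_h'$) lands relative to the chute-modified support $\mathcal Q$. Recall from \eqref{eqn:chute} that $\mathcal Q$ is obtained from $\mathcal P$ by deleting the $+$ at the southwest corner $(i,j)$ and inserting a $+$ at the northeast corner $(i-1,j')$, while all other positions of $\mathcal P$ in rows $i-1,i$ (and everywhere else) are unchanged. So the only positions where $\mathcal Q$ could \emph{fail} to contain a symbol that $\mathcal P$ contains is $(i,j)$ itself; everywhere else $\mathcal P\subseteq\mathcal Q$.

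First I would dispatch the easy half. For $t+1\leq h\leq k$, by our normalization $\mathcal P_h$ has no $+$ at $(i,j)$, so $\mathcal P_h\subseteq\mathcal P\setminus\{(i,j)\}\subseteq\mathcal Q$; this is immediate. For $1\leq h\leq t$, by Claim~\ref{claim:chuteA} the diagram $\mathcal P_h'$ is obtained from $\mathcal P_h$ by applying the northeast move \eqref{local2} to each $+$ in the maximal left-justified run in row $i$ beginning at $(i,j)$, processed right to left. Each such move takes a $+$ from row $i$ to the cell immediately to its northeast in row $i-1$; since row $i$ of $\mathcal P_h$ is (by Lemma~\ref{lemma:weaklysouthwest} and minimality) a left-justified interval starting at column $j$, the moved $+$'s land exactly in the cells of row $i-1$ that $\mathcal Q$ has, namely the interval ending at $(i-1,j')$, while the untouched $+$'s of $\mathcal P_h$ were already in $\mathcal P\setminus\{(i,j)\}\subseteq\mathcal Q$. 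Hence $\mathcal P_h'\subseteq\mathcal Q$.

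The step I expect to require the most care is the bookkeeping that the cells vacated and occupied by the moves of Claim~\ref{claim:chuteA}, taken over all $h\leq t$ simultaneously, reassemble precisely into $\mathcal Q$ rather than something larger or smaller; this amounts to checking that $\mathcal P_1'\cup\cdots\cup\mathcal P_t'\cup\mathcal P_{t+1}\cup\cdots\cup\mathcal P_k=\mathcal Q$, which is really the companion statement to Claim~\ref{claim:chuteContain} and is what makes the chute move an honest overlay operation. For the containment claimed here, though, only the ``$\subseteq$'' direction is needed, so it suffices to observe: (a) the support of $\mathcal P_1\cup\cdots\cup\mathcal P_k$ is $\mathcal P$, so every $+$ in any $\mathcal P_h$ sits at a position of $\mathcal P$; (b) the only position of $\mathcal P$ not in $\mathcal Q$ is $(i,j)$; (c) after the moves, no $\mathcal P_h'$ has a $+$ at $(i,j)$ (the leftmost $+$ of the run there was the first to be moved), and the new positions created all lie in row $i-1$ inside the interval $[\,(i-1,j)\,,\,(i-1,j')\,]$, every cell of which belongs to $\mathcal Q$ by construction of the chute move. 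Combining (a)–(c) with the easy half above gives $\mathcal P_h'\subseteq\mathcal Q$ for $h\leq t$ and $\mathcal P_h\subseteq\mathcal Q$ for $h>t$, as desired. \qed
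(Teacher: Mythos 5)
Your argument is correct and matches the paper's proof essentially verbatim: the easy half uses $\mathcal P_h\subseteq\mathcal P\setminus\{(i,j)\}\subseteq\mathcal Q$, and the hard half tracks each moved $+$ from $(i,a)$ with $j\leq a<j'$ to $(i-1,a+1)\in\mathcal Q$ while observing that $(i,j)$ is vacated. Two harmless slips: row $i$ of $\mathcal P_h$ need not be a left-justified interval (only the consecutive run starting at $(i,j)$ matters, and it stays strictly left of column $j'$ simply because $(i,j')\notin\mathcal P\supseteq\mathcal P_h$), and the cell $(i-1,j)$ is \emph{not} in $\mathcal Q$ --- but no moved $+$ ever lands there, since a northeast move from a column $\geq j$ lands in a column $\geq j+1$.
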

\begin{proof}
First suppose $1\leq h\leq t$. Each move from Claim~\ref{claim:chuteA} takes a $+$ from position $(i,a)$ with $j\leq a < j'$ and replaces it with a $+$ in position $(i-1,a+1)\in \mathcal Q$.  Furthermore, each $\mathcal P_h'$ has no $+$ in position $(i,j)$.  So $\mathcal P_h'\subseteq \mathcal Q$. If $h\geq t+1$, then by assumption $\mathcal P_h$ has no $+$ in position $(i,j)$.  So $\mathcal P_h\subseteq \mathcal P\backslash \{(i,j)\}\subseteq \mathcal Q$.
\end{proof}

\begin{claim}
\label{claim:chuteB} 
${\mathcal Q}={\mathcal P}_1'\cup \cdots \cup {\mathcal P}_t'\cup 
{\mathcal P}_{t+1}\cup \cdots \cup {\mathcal P}_k$.
\end{claim}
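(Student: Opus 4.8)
The plan is to identify the right-hand side of the claimed identity as the support of a multi-plus diagram that, by the previous two claims, is simultaneously a legitimate plus diagram for $X_w'$ and contained in $\mathcal Q$; minimality of $\mathcal Q$ then forces equality.

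First I would check that the pieces on the right-hand side assemble into an element of ${\tt Multi}(w)$. Each $\mathcal P_h'$ with $1\le h\le t$ is obtained from $\mathcal P_h\in {\tt MinPlus}(X_{u_h}')$ by the sequence of northeast local moves (\ref{local2}) produced in Claim~\ref{claim:chuteA}, and Lemma~\ref{lemma:localmoves}(I) guarantees that ${\tt MinPlus}(X_{u_h}')$ is closed under such moves; hence $\mathcal P_h'\in {\tt MinPlus}(X_{u_h}')$. Since also $\mathcal P_h\in {\tt MinPlus}(X_{u_h}')$ for $t+1\le h\le k$ by hypothesis, the tuple $(\mathcal P_1',\ldots,\mathcal P_t',\mathcal P_{t+1},\ldots,\mathcal P_k)$ lies in $\prod_{i=1}^k {\tt MinPlus}(X_{u_i}')={\tt Multi}(w)$. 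Applying ${\tt supp}$ (equivalently, Lemma~\ref{lemma:mainobs}(I)), its support
\[\mathcal R:=\mathcal P_1'\cup\cdots\cup\mathcal P_t'\cup\mathcal P_{t+1}\cup\cdots\cup\mathcal P_k\]
belongs to ${\tt Plus}(X_w')$.

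Next I would invoke Claim~\ref{claim:chuteContain}, which gives $\mathcal P_h'\subseteq\mathcal Q$ for $1\le h\le t$ and $\mathcal P_h\subseteq\mathcal Q$ for $t+1\le h\le k$; unioning these containments yields $\mathcal R\subseteq\mathcal Q$. Now recall $\mathcal Q\in{\tt MinPlus}(X_w')$, being the RC-graph obtained from $\mathcal P$ by a chute move (a chute move preserves the set of RC-graphs of $w$, hence ${\tt MinPlus}(X_w')$). Suppose for contradiction that $\mathcal R\subsetneq\mathcal Q$, and pick $(i_0,j_0)\in\mathcal Q\setminus\mathcal R$. Then $\mathcal R\subseteq\mathcal Q\setminus\{(i_0,j_0)\}$, so $\mathcal L_{\mathcal Q\setminus\{(i_0,j_0)\}}\subseteq\mathcal L_{\mathcal R}\subseteq X_w'$, whence $\mathcal Q\setminus\{(i_0,j_0)\}\in{\tt Plus}(X_w')$ --- contradicting the minimality of $\mathcal Q$. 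Therefore $\mathcal R=\mathcal Q$, which is precisely the asserted identity.

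I do not anticipate a genuine obstacle: the content is entirely bookkeeping built on the two preceding claims. The only two points deserving a line of care are (i) confirming that every move generated in Claim~\ref{claim:chuteA} is of type (\ref{local2}), so that Lemma~\ref{lemma:localmoves}(I) legitimately yields $\mathcal P_h'\in{\tt MinPlus}(X_{u_h}')$, and (ii) recording that $\mathcal Q\in{\tt MinPlus}(X_w')$, which is implicit in the standing assumption that $\mathcal P$ and $\mathcal Q$ are RC-graphs related by a chute move.
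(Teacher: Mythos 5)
Your proposal is correct and is essentially the paper's own argument: the paper likewise sets $\widetilde{\mathcal Q}=\mathcal P_1'\cup\cdots\cup\mathcal P_t'\cup\mathcal P_{t+1}\cup\cdots\cup\mathcal P_k$, notes via Claim~\ref{claim:chuteContain} that $\widetilde{\mathcal Q}\subseteq\mathcal Q$ with $\widetilde{\mathcal Q}\in{\tt Plus}(X_w')$ (by Lemma~\ref{lemma:mainobs}(I)), and concludes equality from $\mathcal Q\in{\tt MinPlus}(X_w')$. Your two points of care (closure of ${\tt MinPlus}(X_{u_h}')$ under the moves of Claim~\ref{claim:chuteA}, and $\mathcal Q\in{\tt MinPlus}(X_w')$) are left implicit in the paper but are exactly the right details to record.
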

\begin{proof}
Let $\widetilde{\mathcal Q}= \mathcal P_1'\cup \ldots \mathcal P_t'\cup \mathcal P_{t+1}\ldots \cup \mathcal P_k$.  Suppose $\widetilde{\mathcal Q}\neq \mathcal Q$.  By Claim~\ref{claim:chuteContain}, each $\mathcal P_i'\subseteq \mathcal Q$, for $1\leq i\leq t$ and $\mathcal P_i\subseteq \mathcal Q$ for $t+1\leq i\leq k$.  Then $ \mathcal Q\supsetneq \widetilde{\mathcal Q}\in {\tt Plus}(X_w')$, contradicting the assumption that $\mathcal Q\in {\tt MinPlus}(X_w')$. 
\end{proof}

 A similar discussion applies to the ladder moves.

\subsection{Future work}

It is straightforward to assign weights to prism tableau in order to give a formula for double Schubert polynomials.

A generalization to Grothendieck polynomials requires a deeper
control of the overlay procedure. In investigating this, one is led to some results of possibly independent interest.

Specifically, for Theorem~\ref{theorem:main}, we have used the fact that the facets of 
$\Delta_{X_w'}$ are intersections of facets of those associated to ${\tt biGrass}(w)$.  One can make a similar conjecture for all \emph{interior} faces $w$'s complex. Each $\Delta_{X_w'}$ is a ball or sphere \cite[Theorem 3.7]{Knutson.Miller:Advances}. Hence one can refer to the interior faces of
this complex. Let \[{\tt IntPlus}(w)=\{\mathcal P:\mathcal P\in {\tt Plus}(w) \text{ and } \mathcal F_{\mathcal P} \text{ is an interior face of } \Delta_{X_w'}\}.\]
\begin{conjecture} 
\label{conj:theconj}
${\tt IntPlus}(w)\subseteq\{\mathcal P_1\cup\cdots \cup \mathcal P_k: \mathcal P_i\in {\tt IntPlus}(u_i), \text{ for } u_i\in {\tt biGrass}(w)\}$.
\end{conjecture}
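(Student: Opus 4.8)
The plan is to recast Conjecture~\ref{conj:theconj} in the language of subword complexes and Demazure products, reduce it to a purely combinatorial ``covering'' statement, and then attack that statement by imitating the ``minimality forces equality'' maneuver that worked for facets in Lemma~\ref{lemma:mainobs}(II) and Claim~\ref{claim:longMove}. First, by \cite{Knutson.Miller:annals}, for the antidiagonal term order $\Delta_{X_w'}$ is (a multicone over) the Knutson--Miller subword complex $\Delta(Q,w)$, where $Q$ is a fixed triangular word depending only on $n$; a subset $S$ is a face exactly when its complementary subword $\mathcal P:=Q\setminus S$ (the plus diagram) contains a reduced word for $w$. Since each $u_i\le w$ in Bruhat order (as $X_w\subseteq X_{u_i}$), the subword property of Bruhat order together with Knutson's $\prec_{\tt anti}$-spectrum identity $X_w'=\bigcap_i X_{u_i}'$ \cite[Section~7.2]{Knutson:Frob} gives $\Delta(Q,w)=\bigcap_i\Delta(Q,u_i)$ as subcomplexes of $2^Q$. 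Using the standard facts that $\link_{\Delta(Q,\pi)}(S)=\Delta(\mathcal P,\pi)$ and that a nonvoid subword complex is a sphere iff the Demazure product of its whole word equals the target, one gets: $\mathcal F_{\mathcal P}$ is an interior face of $\Delta(Q,\pi)$ iff $\delta(\mathcal P)=\pi$ (not $\succ\pi$). Passing to complements, Conjecture~\ref{conj:theconj} is equivalent to:
\begin{quotation}
\emph{if $\mathcal P\subseteq Q$ has Demazure product $\delta(\mathcal P)=w$, then there are subwords $\mathcal P_1,\dots,\mathcal P_k\subseteq\mathcal P$ with $\delta(\mathcal P_i)=u_i$ and $\mathcal P_1\cup\cdots\cup\mathcal P_k=\mathcal P$.}
\end{quotation}
The facet case $\delta(\mathcal P)=w$ \emph{reduced} is exactly Lemma~\ref{lemma:mainobs}(II) together with ``facets are interior'', and the obstruction of Example~\ref{ex:notlattice} warns that the letters of $\mathcal P$ beyond a reduced word must be distributed among the colors with care.

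The next step is to produce, for each color, a canonical maximal piece. I would show that for a \emph{biGrassmannian} $u_i$ the family $\{\mathcal R\subseteq\mathcal P:\delta(\mathcal R)=u_i\}$ has a unique maximum $\widehat{\mathcal P}_i$. This fails for general targets — already for $Q=(s_1,s_2,s_1,s_2)$ and $\pi=s_1s_2$, where $\{1,2\}$ and $\{3,4\}$ have Demazure product $\pi$ but $\{1,2,3,4\}$ has Demazure product $s_1s_2s_1$ — so the biGrassmannian hypothesis enters here essentially. The natural argument uses the explicit model: since $u_i$ is biGrassmannian, $X_{u_i}$ is a one-sided (northwest-corner) determinantal variety and $\Delta_{X_{u_i}'}$ is a multicone over the Stanley--Reisner complex of the antidiagonal ideal of a generic determinantal variety, whose interior faces are explicitly parametrized (they underlie the $RC$-graph/semistandard-tableau bijection \cite[Proposition~5.3]{KMY}), and union-closure of $\{\mathcal R\subseteq\mathcal P:\delta(\mathcal R)=u_i\}$ then follows from monotonicity of $\delta$ on this model. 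Set $\mathcal P_i:=\widehat{\mathcal P}_i$.

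It remains to show $\bigcup_i\widehat{\mathcal P}_i=\mathcal P$, i.e.\ that every letter $p\in\mathcal P$ lies in some $\widehat{\mathcal P}_i$. Call $p$ \emph{essential} if $\delta(\mathcal P\setminus\{p\})\ne w$ and \emph{removable} otherwise. If $p$ is essential it lies in every reduced subword of $\mathcal P$ for $w$, in particular in any reduced word $\mathcal Q=\bigcup_i\mathcal Q_i$ supplied for the facet below $\mathcal F_{\mathcal P}$ by Lemma~\ref{lemma:mainobs}(II); since $\mathcal Q_i\in{\tt MinPlus}(X_{u_i}')$ we have $\delta(\mathcal Q_i)=u_i$ and $\mathcal Q_i\subseteq\widehat{\mathcal P}_i$, so $p\in\bigcup_i\widehat{\mathcal P}_i$. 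Thus the whole difficulty sits in the removable letters. I would then argue that an uncoverable removable $p$ can be deleted from $\mathcal P$ without changing any $\widehat{\mathcal P}_i$ and without dropping $\delta$ below $w$ (using the previous step, and that essential letters are coverable), so that all uncoverable letters can be removed at once to reach an interior face $\mathcal P^\ast\subseteq\mathcal P$ with $\bigcup_i\widehat{\mathcal P}_i=\mathcal P^\ast$; one must then rule out $\mathcal P^\ast\subsetneq\mathcal P$, e.g.\ by exhibiting, for any prescribed removable $p$, an $RC$-graph of some $u_i$ through $p$ inside $\mathcal P$ — a strengthening of the ``long move'' analysis of Claim~\ref{claim:longMove}.

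That last point is where I expect the real obstacle to lie, and is presumably why the assertion is only conjectured: the facet proof exploits that a minimal plus diagram has \emph{exactly} $\ell(w)$ boxes, which makes the overlay tight; a general interior face has slack, and coordinating the $RC$-graphs of the various $u_i$ so that they \emph{exactly} cover $\mathcal P$ seems to require input beyond the lattice and long-move machinery of Section~3 — perhaps the shellability and explicit interior-face structure of $\Delta_{X_w'}$ from \cite{Knutson.Miller:Advances}, or a direct computation of the canonical module of $\mathbb{C}[\Delta_{X_w'}]$ across the reduced intersection $X_w'=\bigcap_i X_{u_i}'$.
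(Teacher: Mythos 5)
This statement is not proved in the paper: it is stated as Conjecture~\ref{conj:theconj} and left open, supported only by exhaustive computer verification for $n\leq 6$, with a remark that an intended proof would proceed via $K$-theoretic analogues of the chute and ladder moves (connecting interior plus diagrams), to be carried out elsewhere by the first author. So there is no paper proof to compare against, and your proposal -- as you yourself acknowledge -- also stops short of a proof. Your reformulation via subword complexes is reasonable (interior faces of $\Delta(Q,\pi)$ do correspond to subwords whose Demazure product is exactly $\pi$, by the results of \cite{Knutson.Miller:Advances}, modulo the routine bookkeeping with the multicone), and it is a genuinely different framing from the route hinted at in the paper; but the reformulated covering statement is \emph{equivalent} to the conjecture, so everything hinges on the two steps you then sketch.

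Both of those steps are genuine gaps. First, the existence of a unique maximal $\widehat{\mathcal P}_i\subseteq\mathcal P$ with $\delta(\widehat{\mathcal P}_i)=u_i$ requires that the family $\{\mathcal R\subseteq\mathcal P:\delta(\mathcal R)=u_i\}$ be closed under union; ``monotonicity of $\delta$'' only gives $\delta(\mathcal R\cup\mathcal R')\geq u_i$ in Bruhat order, and the needed equality is exactly the kind of structural property of biGrassmannian permutations that must be proved, not asserted -- your own $(s_1,s_2,s_1,s_2)$ example shows how easily it fails without the hypothesis, and the appeal to the determinantal/ladder model of ${\tt MinPlus}(X_{u_i}')$ concerns facets, not interior faces, so it does not yet deliver this. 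Second, and decisively, the covering step $\bigcup_i\widehat{\mathcal P}_i=\mathcal P$ -- that every removable letter of $\mathcal P$ lies on an interior plus diagram of some $u_i$ inside $\mathcal P$ -- is not argued at all; you correctly identify it as the real obstacle. Since the facet machinery of Section~3 (minimality forcing $\#\mathcal P=\ell(w)$, Lemma~\ref{lemma:mainobs}(II), Claim~\ref{claim:longMove}) has no analogue once there is slack, the proposal in effect reduces the conjecture to itself plus two unproven lemmas. That outcome is consistent with the paper, which leaves the statement open and suggests (but does not execute) a different attack through $K$-chute and $K$-ladder moves.
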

Conjecture~\ref{conj:theconj} has been exhaustively 
computer checked for all $n\leq 6$. 

As part of an intended proof of Conjecture~\ref{conj:theconj}, 
one defines $K$-theoretic analogues of the chute and ladder moves
of \cite{Bergeron.Billey}: that is if $\mathcal P\to \mathcal Q$ by a chute move (respectively, ladder move) then $\mathcal P\to \mathcal P\cup \mathcal Q$ is a $K$-chute (respectively,
$K$-ladder move). Whereas not all interior plus diagrams 
are connected by the original chute and ladder moves, it is true that 
they are connected once one allows the extended moves.

The first author plans to address these and related issues elsewhere.

\section*{Acknowledgments}
We thank Laura Escobar, Sergey Fomin, Victor Reiner, Steven Sam, Mark Shimozono for helpful remarks. We also made extensive use
of {\tt Macaulay 2} during our investigation. AY was supported by an NSF grant.

\end{document}